\numberwithin{equation}{section}
\newcommand{\opm}{\operatorname{op}}
\newcommand{\Res}{\operatorname{Res}}
\newcommand{\Diff}{\operatorname{Diff}}
\newcommand{\Pf}{\operatorname{Pf.}}
\newcommand{\Aop}{\operatorname{\cal A}}
\newcommand{\Pop}{\operatorname{\cal P}}
\newcommand{\Hop}{\operatorname{\cal H}}
\def\dbar{{\mathchar'26\mkern-12mud}}
\begin{document}
\newtheorem{assumption}{Assumption}
\newtheorem{proposition}{Proposition}
\newtheorem{definition}{Definition}
\newtheorem{lemma}{Lemma}
\newtheorem{theorem}{Theorem}
\newtheorem{observation}{Observation}
\newtheorem{remark}{Remark}
\newtheorem{corollary}{Corollary}
\newtheorem{example}{Example}

\title{Fundamental Solutions and Green's Functions for certain Elliptic Differential Operators from a Pseudo-Differential Algebra}

\author{Heinz-J{\"u}rgen Flad$^\dag$, Gohar Flad-Harutyunyan$^\ast$,
\ \\
$^\dag${\small Institut f\"ur Numerische Simulation, Universit\"at Bonn,
Friedrich-Hirzebruch-Allee 7 53115 Bonn}\\
$^\ast${\small Fachgruppe Mathematik, RWTH Aachen, Pontdriesch 14-16, 52062 Aachen}
}
\maketitle

\begin{abstract}
\noindent
We have studied possible applications of a particular pseudo-differential algebra in singular analysis for the construction of fundamental solutions and Green's functions of a certain class of elliptic partial differential operators. The pseudo-differential algebra considered in the present work, comprises degenerate partial differential operators on stretched cones which can be locally described as Fuchs type differential operators in appropriate polar coordinates. We present a general approach for the explicit construction of their parametrices, which is based on the concept of an asymptotic parametrix, introduced in \cite{FHS16}.
For some selected partial differential operators, we demonstrate the feasibility of our approach by an explicit calculation of fundamental solutions and Green's functions from the corresponding parametrices. In our approach, the Green's functions are given in separable form, which generalizes the Laplace expansion of the Green's function of the Laplace operator in three dimensions. As a concrete application in quantum scattering theory, we construct a fundamental solution of a single-particle Hamilton operator with singular Coulomb potential.       
\end{abstract}

\section{Introduction}
\label{Section-intro}
In the field of partial differential equations, fundamental solution and related Green's functions are a versatile tool with a wide range of applications in mathematics, physics and engineering. Whereas the notion of a fundamental solution is uniquely defined, care has to be taken concerning the notion of a Green's function, which might differ depending on the context of its application. This is particularly true for physics where the label "Green's function" might refer, e.g., to a classical Green's function in potential theory or to a many-particle Green's function in quantum many-particle theory. Despite some differences in the various notions of a Green's function, there is an essential common feature which links them to fundamental solutions. Therefore it does not seem to be appropriate to give a single mathematically rigorous definition of a Green's function. Instead we will adopt it to the specific situation under consideration. 

In the present work, we want to focus on second order linear partial differential operators of the form
\begin{equation}
 \Aop = \sum_{|\alpha|\leq 2} a_{\alpha}(x) \partial^{\alpha}
\label{Acart}
\end{equation}
in an open domain $0 \in \Omega \subseteq \mathbb{R}^n$.
For such type of operators, let us briefly recall the definition of a fundamental solution and Green's function

\begin{definition}
A distribution $u \in {\cal D}'(\Omega)$ is a fundamental solution of the operator (\ref{Acart}) if it satisfies the equation
\[
 \Aop u=\delta
\] 
in a distributional sense, with respect to the Dirac-distribution $\delta$.
\end{definition}

Fundamental solutions have been proven to exist for a large class of differential operators of the form (\ref{Acart}), cf.~\cite{H1,H2,H3} for a detailed account in the case of constant and smooth coefficients.
In the particular case of second order elliptic differential operators with real analytic coefficients in $\Omega := B_R(0)$ a fundamental solution has the form
\begin{equation}
 u(x) = \frac{1}{|x|^{n-2}} f(x) +\ln(|x|) g(x)
\label{uanalytic}
\end{equation}
with $f,g$ real analytic functions in $\Omega$ and $g=0$ for $n$ odd (cf. \cite{FF22,J50,K49}.

As already mentioned before, the more general notion of a Green's function is not well defined and we start with the most general definition.

\begin{definition}
\label{greendef}
A Green's function $G \in {\cal D}'(\Omega)$ of the operator (\ref{Acart}) is a distribution valued function $G: \Omega \ \rightarrow \ {\cal D}'(\Omega)$ which satisfies the distributional equation
\begin{equation}
 \Aop G_{\tilde{x}}=\delta_{\tilde{x}} \ \ \mbox{for all} \ \tilde{x} \in \Omega ,
\label{Gdef}
\end{equation} 
where $\delta_{\tilde{x}}$ denotes the shifted Dirac distribution, i.e., $\delta_{\tilde{x}}(f)=f(\tilde{x})$ for $f \in {\cal D}(\Omega)$.
\end{definition}

In the present work, we want to discuss a novel approach for the construction of fundamental solutions and Green's functions for operators (\ref{Acart}) with possibly singular coefficients $a_{\alpha}$, based on methods from singular analysis,  
To be precise, we allow for a single point $P \in \Omega$, w.l.o.g.~located at the origin, such that $a_{\alpha} \in C^{\infty}(\Omega \setminus P)$.
In order to keep control of the singular behaviour, we restrict ourselves to a specific typ of conical singularity, which can be best characterized by a transformation to an appropriate system of polar coordinates. For notational simplicity, we consider the case $\Omega =\mathbb{R}^n$. Let us introduce the cone ${\cal C}^{\Delta}(X) :=\bar{\mathbb{R}}^{+} \times X /(\{0\} \times X)$ with closed compact, $n-1$-dimensional smooth base manifold $X$ and the corresponding open streched cone ${\cal C}^{\wedge}(X):=\mathbb{R}^{+} \times X$. On ${\cal C}^{\wedge}(X)$, we choose polar coordinates $(r,\phi)$, where $r \in \mathbb{R}^{+}$ and $\phi$ denotes some set of local coordinates on $X$. 
Now consider a homeomorphism $\varphi: {\cal C}^{\Delta} \ \rightarrow \ \mathbb{R}^n$ such that the tip of the cone is mapped on the origin, which induces a diffeomorphism
\begin{equation} 
\varphi|_{{\cal C}^{\wedge}(X)}: {\cal C}^{\wedge}(X) \ \rightarrow \ \mathbb{R}^n \setminus P .
\label{diffeom} 
\end{equation}
From this diffeomorphism, we get a representation of the operator (\ref{Acart}) in polar coordinates of the form 
\begin{equation}
 \tilde{\Aop} = r^{-2} \sum_{j=0}^{2} a_j(r) \biggl(-r \frac{\partial}{\partial r} \biggr)^{j}
\label{Apol}
\end{equation}
where the coefficients $a_j \in \Diff^{2-j}(X)$, $j=0,1,2$, represent partial differential operators, at most of order $2-j$, on $X$.
For our purposes, we have to restrict ourselves to the subset $\Diff_{\deg}^{2}({\cal C}^{\wedge})$ of operators of the form (\ref{Apol}) with coefficients $a_j \in C^{\infty}(\bar{\mathbb{R}}^{+},\Diff^{2-j}(X))$, which belong to the pseudo differential algebra to be discussed below. 
For such partial differential operators (\ref{Apol}), which additionally satisfy the ellipticity conditions of the algebra, parametrices exist. Such a parametrix can be considered as a pseudoinverse of the corresponding partial differential operator modulo so called Green operators, not to be confused with Green's functions, which provide additional asymptotic information. In Section \ref{generalansatz}, we present a general approach for the explicit construction of these parametrices, based on the concept of an asymptotic parametrix, introduced in Refs.~\cite{FHSS11,FHS16}. Within our approach, we want to extract fundamental solutions and Green's functions from the integral kernels of these parametrices. In order to demonstrate its feasibility, we provide in Sections \ref{shiftedLaplace} and \ref{scatteringtheory} some explicit calculations of fundamental solutions and Green's functions from integral kernels of parametrices for some selected partial differential operators. 

An important point which deserves our attention is the fact, that fundamental solutions and Green's functions are by definition distributions in $\Omega \subseteq \mathbb{R}^n$, whereas the kernel of a parametrix and derived quantities are functions on the streched cone ${\cal C}^{\wedge}(X)$ where the origin is excluded by definition. Therefore we will make use of Hadamard's notion of a pseudofunction, cf.~\cite{Schwartz}.
By the diffeomorphism  $x=\varphi(r,\phi)$, cf.~(\ref{diffeom}), a  function $u(r,\phi)$ on ${\cal C}^{\wedge}(X)$ corresponds to a function $   \tilde{u}(x)$ on $\mathbb{R}^n \setminus P$.
A function $u$ on ${\cal C}^{\wedge}(X)$ can be regarded as a regular distribution on $\Omega$
if $\tilde{u}$ can be identified with an element in $L^1_{loc}(\mathbb{R}^n)$, the set of equivalence classes of locally integrable functions in $\mathbb{R}^n$, and therefore with a regular distribution in ${\cal D}'(\mathbb{R}^n)$.
We call this regular distribution the pseudofunction corresponding to $u$ and denote it by $\Pf u$.

Let us briefly illustrate these concepts for the most prominent case in applications which is the Laplace operator $\Delta_3$ in $\mathbb{R}^3$.

\begin{example}
A fundamental solution of the Laplace operator $\Delta_3$ in $\mathbb{R}^3$ is given by $u(x)= -\frac{1}{4\pi|x|}$ and the corresponding Green's function by $G(x,\tilde{x})=u(x-\tilde{x})=-\frac{1}{4\pi|x-\tilde{x}|}$. Given any $f \in {\cal D}(\mathbb{R}^3)$ the convolution
\begin{equation}
 g(x):= \int_{\mathbb{R}^3} G(x,\tilde{x}) f(\tilde{x}) \; d\tilde{x}
\label{gGf}
\end{equation}
satisfies the equation $\Aop g=f$. 

In spherical polar coordinates, the Laplace operator is given by
\[
 \tilde{\Delta}_{3} := \frac{1}{r^2} \biggl[ \biggl(-r \frac{\partial}{\partial r} \biggr)^2 - \biggl(-r \frac{\partial}{\partial r} \biggr) + \Delta_{S^{2}} \biggr]
\]
and its fundamental solution can be expressed as
\[
 u(x)=\Pf -\frac{1}{4\pi r} .
\]
The Green's function $G(\cdot,\tilde{x})$ of the Laplace operator, also known as Newton or Coulomb potential in the physics literature, can be represented in a separable form by the Laplace expansion, i.e.,
\[
 G( \cdot, \tilde{x}) := \left\{ \begin{array}{cc}  \Pf G(\cdot|\tilde{r},\tilde{\phi}) & \mbox{for} \ \tilde{x} = \varphi(\tilde{r},\tilde{\phi}) \neq 0 \\
 \lim_{\tilde{r} \rightarrow 0} \Pf G(\cdot|\tilde{r},\tilde{\phi}) & \mbox{for} \ \tilde{x} =0 \end{array} \right.
\]
with
\begin{equation}
 G(r,\phi|\tilde{r},\tilde{\phi}) = -\sum_{\ell=0}^{\infty} \frac{r_{<}^{\ell}}{r_{>}^{\ell+1}}
 \frac{1}{2\ell+1}
 \sum_{m=-\ell}^{\ell} (-1)^{m} Y_{\ell,m}(\theta,\varphi) Y_{\ell,-m}(\tilde{\theta},\tilde{\varphi})
\label{Coulexp}
\end{equation}
and $r_{<} := \min \{|x|,|\tilde{x}|\}$ and $r_{>} := \max \{|x|,|\tilde{x}|\}$, respectively, where (\ref{Coulexp}) corresponds to a function on ${\cal C}^{\wedge}(S^2) \times {\cal C}^{\wedge}(S^2)$.

In order to achieve a separation of variables, the Laplace expansion is done with respect to the eigenfunctions of the Laplace-Beltrami operator on the sphere $S^2$, so called spherical harmonics, $Y_{\ell,m}$, $\ell=0,1,\ldots$ and $m=-\ell,\ldots,\ell$. By taking the limit $\tilde{r} \rightarrow 0$ in (\ref{Coulexp}) one can easily recover the fundamental solution $u$.
The Laplace expansion greatly facilitates the computation of the convolution (\ref{gGf}) and is therefore of great practical significance in computational physics and chemistry. 
\end{example}

Our change of perspective from cartesian to polar coordinates is a key component of the present work and is motivated by the fundamental solutions (\ref{uanalytic}) for differential operators of the form (\ref{Acart}) with real analytic coefficients and the Laplace expansion of the Green's function (\ref{Coulexp}), where $\mathbb{R}^3$ has been replaced by the cone ${\cal C}^{\Delta}:=\bar{\mathbb{R}}^{+} \times S^2 /(\{0\} \times S^2)$ with base $S^2$ on which the Laplace-Beltrami operator has a pure point spectrum and the eigenfunctions form a complete basis in $L_2(S^2)$.

\subsection{Outline of our approach for the Laplace operator}
\label{Sec-Laplace}
Before we enter into a general discussion for second order differential operators in the cone algebra, we want to exemplify our approach for the Laplace operator in $\mathbb{R}^n$, with $n \geq 3$\footnote{The case $n=2$ is different, according to our discussion in Section \ref{sectionconstruction}}. 
Following our discussion in Section \ref{Section-intro}, there are two different types of representation for the Laplace operator in $n$ dimensions, depending on whether one considers it in $\mathbb{R}^n$ with respect to Cartesian coordinates
 \begin{equation}
 \Delta_{n} := \sum_{j=1}^n \partial_{j}^{2} 
\label{Lcart}
\end{equation}
or some kind of spherical polar coordinates defined on the stretched cone ${\cal C}^{\wedge} := \mathbb{R}^{+} \times S^{n-1}$ with base $S^{n-1}$. 
In spherical polar ($n=3$), and hyperspherical polar ($n\geq 3$) coordinates, the Laplace operator is represented by
\begin{equation}
 \tilde{\Delta}_{n} := \frac{1}{r^2} \biggl[ \biggl(-r \frac{\partial}{\partial r} \biggr)^2 - (n-2)\biggl(-r \frac{\partial}{\partial r} \biggr) + \Delta_{S^{n-1}} \biggr] .
\label{Lspher}
\end{equation}
where $\Delta_{S^{n-1}}$ denotes the Laplace-Beltrami operator on the $n-1$ sphere $S^{n-1}$. The Laplace-Beltrami operator $\Delta_{S^{n-1}}$ has a pure point spectrum with eigenvalues $\lambda_{\ell} =-\ell(\ell+n-2)$, $\ell=0,1,2,\ldots$. For $n=3$ each eigenvalue $\lambda_{\ell}$ has multiplicity $2\ell+1$ and for $n>3$ its multiplicity is given by
\[
 \frac{(\ell +n/2-1)\prod_{j=1}^{n-3}(\ell+j)}{(n/2-1) \cdot (n-3)!} ,
\]
cf.~\cite{STV18} for further details.
In the following, we denote by $P_{\ell}$, $\ell=0,1,2,\ldots$, the projection operators from $L^2(S^{n-1})$ on the corresponding eigenspaces of the eigenvalues $-\ell(\ell+n-2)$. Herewith, we can form the spectral resolution 
\begin{equation}
 \Delta_{S^{n-1}} = - \sum_{\ell=0}^{\infty} \ell(\ell+n-2) P_{\ell}
\label{specres}
\end{equation} 
which is crucial for the following considerations.

The basic idea of our approach is to consider (\ref{Lspher}) as an element of an operator algebra which enables the construction of a parametrix. This can be achieved within the pseudo-differential cone algebra, developed by Schulze and collaborators, cf.~the monographs \cite{ES97,HS08,Schulze98}. Existence of a parametrix for a partial differential operator is intimately connected with its ellipticity.
In the framework of the pseudo-differential calculus, considered in the present work, the notion of ellipticity involves a whole hierarchy of symbols associated to a partial differential operator, cf.~Chapter 10 of \cite{HS08} for a detailed discussion. For unbounded domains, e.g.~${\cal C}^{\wedge}$, one has to take into account the exit behaviour to infinity, as a result, the Laplace operator $\tilde{\Delta}_{n}$ is not elliptic in ${\cal C}^{\wedge}$, because it fails to satisfy the elliptic exit condition.
It is only the shifted Laplacian $\tilde{\Delta}_n-\kappa^2$, discussed in Section \ref{shiftedLaplace}, which satisfies all ellipticity conditions.
For the formal construction of a parametrix outlined below, the exit condition is not essential and does not affect the final result.
A rigorous justification for disregarding it will be given in Section \ref{scatteringtheory}, where we consider a fundamental solution and Green's function for shifted Laplacian and recover the results given below by taking the limit $\kappa \rightarrow 0$. 

In order to construct a parametrix, we have to represent (\ref{Lspher}) as Mellin type pseudo-differential operator, i.e.,
\[
 \tilde{\Delta}_{n} u = r^{-2} \opm_M^{\gamma-\frac{n-1}{2}}(h) u
\]
for $u \in \tilde{{\cal D}}({\cal C}^{\wedge})$, here $\tilde{{\cal D}}({\cal C}^{\wedge}) := \{ \varphi^{\ast} g \ | \ g \in {\cal D}(\mathbb{R}^n)\}$, where $\varphi^{\ast} g$ denotes the pullback under the diffeomorphism (\ref{diffeom}), with operator valued Mellin symbol
\begin{eqnarray}
\nonumber
 h(w) & = & w^2 -(n-2)w + \Delta_{S^{n-1}} \\ \label{mL}
 & = & w^2 -(n-2)w - \sum_{\ell=0}^{\infty} \ell(\ell+n-2) P_{\ell}
\end{eqnarray}
we refer, e.g., to \cite[ Chapter 8]{ES97} for further details. 
For the parametrix we take the ansatz
\[
 \Pop u = r^{2} \opm_M^{\gamma-\frac{n+3}{2}} \bigl( h^{(-1)}(w) \bigr) u
\]
and consider the operator product
\begin{eqnarray*}
 \Pop_n \tilde{\Delta}_{n} & = & r^{2} \opm_M^{\gamma-\frac{n+3}{2}}(h^{(-1)}(w))  r^{-2} \opm_M^{\gamma-\frac{n-1}{2}}(h(w)) \\
 & = & \opm_M^{\gamma-\frac{n-1}{2}}(h^{(-1)}(w+2)) \opm_M^{\gamma-\frac{n-1}{2}}(h(w)) \\
 & = & \opm_M^{\gamma-\frac{n-1}{2}}( h^{(-1)}(w+2)h(w))
\end{eqnarray*}
The operator valued symbol of the parametrix has to satisfy the equation
\[
 h^{(-1)}(w+2)h(w)=1
\]
which can be solved for
\begin{eqnarray}
\nonumber
 h^{(-1)}(w) & = & \frac{1}{h(w-2)} \\ \nonumber
 & = & \frac{1}{(w-2)^2 -(n-2)(w-2) + \Delta_{S^{n-1}}} \\ \label{poles}
 & = & \sum_{\ell=0}^{\infty} \underbrace{\frac{P_{\ell}}{(w-2+\ell)(w-\ell-n)}}_{=: h^{(-1)}_{\ell}(w)} .
\end{eqnarray}
It can be easily that the terms in the sum (\ref{poles}) have only simple poles for $n \geq 3$ but has a pole of order 2 for $n=2$ at $\ell =0$.
It turns out, that for this particular reason, the case $n=2$ is special, therefore we only consider the case $n \geq 3$.

Let $u \in \tilde{{\cal D}}({\cal C}^{\wedge})$, the action of the parametrix $P$ is given by the double integral
\begin{equation}
 r^{2} \opm_M^{\gamma-\frac{n+3}{2}}(h^{(-1)}) u =
 r^2 \int_{\mathbb{R}} \int_{0}^{\infty} \left( \frac{r}{\tilde{r}} \right)^{-(\frac{n+4}{2}-\gamma+i\rho)} h^{(-1)}(\tfrac{n+4}{2}-\gamma+i\rho)\, u(\tilde{r},\phi) \, \frac{d\tilde{r}}{\tilde{r}} \dbar\rho .
\label{Pintegral}
\end{equation}
Here we choose $2-\frac{n}{2} < \gamma < \frac{n}{2}$ which is related to the ellipticity condition of the conormal symbol in the cone algebra. 
Splitting the radial integral into two parts, i.e., $\tilde{r} < r$ and $\tilde{r} > r$, one can apply Cauchy's residue theorem to the spectral resolution (\ref{specres}) of the operator valued symbol.
Taking into account, that all poles in (\ref{specres}) are of order 1,
one can calculate the integral along the complex line by closing the path on the right and left hand side, respectively. 
After some algebraic manipulations one obtains
\begin{eqnarray*}
 \Pop_nu(r,\phi) & = & -\sum_{\ell=0}^{\infty} \int_{S^{n-1}} \int_{0}^{r} \frac{\tilde{r}^{\ell}}{r^{\ell+n-2}}
 \frac{p_{\ell}(\phi|\tilde{\phi})}{2\ell+n-2} u(\tilde{r},\tilde{\phi}) \, \tilde{r}^{n-1} d\tilde{r} \mu_{n-1}(\tilde{\phi}) d\tilde{\phi} \\
 & & -\sum_{\ell=0}^{\infty} \int_{S^{n-1}} \int_{r}^{\infty} \frac{r^{\ell}}{\tilde{r}^{\ell+n-2}}
 \frac{p_{\ell}(\phi|\tilde{\phi})}{2\ell+n-2} u(\tilde{r},\tilde{\phi}) \, \tilde{r}^{n-1} d\tilde{r} \mu_{n-1}(\tilde{\phi}) d\tilde{\phi} .
\end{eqnarray*}
Therefore, the parametrix can be represented by an integral operator
\[
 \Pop_nu(r,\phi) = \int_{S^{n-1}} \int_{0}^{\infty} K_n(r,\phi|\tilde{r},\tilde{\phi}) u(\tilde{r},\tilde{\phi}) \tilde{r}^{n-1} d\tilde{r} \mu_{n-1}(\tilde{\phi}) d\tilde{\phi}
\]
with kernel function
\[
 K_n(r,\phi|\tilde{r},\tilde{\phi}) = -\sum_{\ell=0}^{\infty} \frac{r_{<}^{\ell}}{r_{>}^{\ell+n-2}}
 \frac{p_{\ell}(\phi|\tilde{\phi})}{2\ell+n-2} ,
\]
where $p_{\ell}(\phi|\tilde{\phi})$ denotes the integral kernel of the projection operator $P_{\ell}$, $\ell=0,1,2,\ldots$ and $r_{<} := \min \{r,\tilde{r}\}$, $r_{>} := \max \{r,\tilde{r}\}$, respectively. For $n=3$, using spherical coordinates $\phi=(\theta,\varphi)$, we have 
\[
 p_{\ell}(\theta,\varphi|\tilde{\theta},\tilde{\varphi}) 
 = \sum_{m=-\ell}^{\ell} (-1)^{m} Y_{\ell,m}(\theta,\varphi) Y_{\ell,-m}(\tilde{\theta}, \tilde{\varphi})
\]
and recover the well known Laplace expansion of the Green's function of $\Delta_3$, given by (\ref{Coulexp}).

The Laplace operator is a particularly simple case because the symbol of the parametrix can be derived by direct inversion (\ref{poles}). This was only possible, because all terms which contain derivatives with respect to coordinates of the base $S^{n-1}$ are subsummed in the Laplace-Beltrami operator $\Delta_{S^{n-1}}$ which has especially nice spectral properties that enable the use of a spectral resolution in (\ref{poles}). Furthermore, the direct inversion (\ref{poles}) requires that all coefficients $a_j$, $j=0,1,2$, in the representation (\ref{Apol}), are constants, which is obviously the case for the Laplace operator. However,
this will not be the case in general and we have to rely on an asymptotic parametrix construction \cite{FHS16}, which is considerably more envolved.  

\section{A generalized approach for elliptic partial differential operators in the pseudo-differential algebra}
\label{generalansatz}
The Laplace operator discussed in Section \ref{Sec-Laplace} motivates a generalization of our approach to elliptic second order differential operators on open stretched cones 
${\cal C}^{\wedge}(X)$, with closed compact, $n-1$-dimensional smooth base manifold $X$, of the following form 
\begin{equation}
 \tilde{\Aop} = r^{-2} \left[ \sum_{j=0}^{2} a_j(r) \biggl(-r \frac{\partial}{\partial r} \biggr)^{j} + b(r) \Lambda_X \right]
\label{DefA2}
\end{equation}
and real analytic coefficients $a_j, b \in C^{\infty}(\bar{\mathbb{R}}^{+})$\footnote{In our examples, we consider only the case where these coefficients are polynomials in $r$ of finite order.}. It should be mentioned, that despite the analyticity of the coefficients, (\ref{DefA2}) might be singular, due to prefactor $r^{-2}$ in front of the differential operator.
Furthermore, we assume that $\Lambda_X$ is an elliptic, essentially self-adjoint, second order differential operator, semibounded from below, on the base $X$. Furthermore, we assume that $\Lambda_X$ has a pure point spectrum $\lambda_0,\lambda_1,\lambda_2\ldots$, with lowest eigenvalue $\lambda_0=0$ and a corresponding constant eigenfunction, such that the corresponding eigenfunctions form a complete basis in $L_2(X)$. 

For differential operators (\ref{DefA2}), we want to derive an explicit expression of the kernel of the parametrix in terms of a generalized Laplace expansion of the form
\begin{equation}
 K(r,\phi|\tilde{r},\tilde{\phi}) = \sum_{\ell=0}^{\infty} k_{\ell}(r,\tilde{r},\lambda_{\ell}) p_{\ell}(\phi,\tilde{\phi}) ,
\label{GLaplace}
\end{equation}
where $p_{\ell}(\phi|\tilde{\phi})$ denotes the integral kernel of the projection operator $P_{\ell}$, $\ell=0,1,2,\ldots$, onto the $L_2(X)$ subspace spanned by the eigenfunctions of $\Lambda_X$ for the eigenvalue $\lambda_{\ell}$, i.e.,
\[
 p_{\ell}(\phi,\tilde{\phi}) 
 = \sum_{m=1}^{m_{\ell}} u_{\ell,m}(\phi) \bar{u}_{\ell,m}(\tilde{\phi}),
 \quad \mbox{with} \ \Lambda_X u_{\ell,m} = \lambda_{\ell} u_{\ell,m}, \ \mbox{for} \ m=1,\ldots,m_{\ell} .
\]
In order to deal with $r$ dependent coefficients $a_j,b$, $j=0,1,2$, it becomes necessary to perform an asymptotic parametrix construction, introduced in 
Ref.~\cite{FHS16} from which we take notations and definitions in the following. The differential operator (\ref{DefA2}) can be represented in the cone algebra by the Mellin type pseudo-differential operator 
\[
 \tilde{\Aop} = r^{-2} \opm_M^{\gamma-\frac{n-1}{2}}\bigl(h(r,w,\Lambda_X \bigr) 
\]
where
\[
h(r,w,\Lambda_X) := \sum_{j=0}^{2} a_j(r) w^j+b(r)\Lambda_X
\]
belongs to $C^{\infty}(\bar{\mathbb{R}}^{+},M^{2}_{\cal O}(X))$,
cf.~\cite{Schulze98} for further details.

\subsection{Construction of an asymptotic parametrix}
\label{sectionconstruction}
Given power series representations for the coefficients
\[
 a_j(r) = \sum_{p=0}^{\infty} a_j^{(p)} r^p, \ j=0,1,2 \ \mbox{and} \ b(r) = \sum_{p=0}^{\infty} b^{(p)} r^p
\]
we decompse 
\[
 h(r,w,\Lambda_X) =\sum_{i=0}^{\infty} r^i h_i(w,\Lambda_X)
\]
where 
\[
 h_i(w,\Lambda_X) = \sum_{j=0}^{2} a_j^{(i)} w^j +b^{(i)} \Lambda_X  
\]
denotes the asymptotic symbols of the differential operator (\ref{DefA2}). 
In the following we take $\gamma \in \mathbb{R}$ such that the ellipticity condition with respect to the conormal symbol, which in our case equals $h_0$, is satisfied, i.e., $h_0$ defines isomorphisms $h_0(w,\Lambda_X): \ H^{s}(X) \rightarrow H^{s-2}(X)$ for any $s \in \mathbb{R}$ and $\Re w = \frac{n}{2}-\gamma$. 
With respect to the coefficients, this condition is equivalent to
\[
 a^{(0)}_2 w^2 +a^{(0)}_1 w +a^{(0)}_0 + b^{0} \lambda_{\ell} \neq 0, \quad \ell=0,1,2,\ldots 
\] 
for all $\Re w =\frac{n}{2}-\gamma$. 
For its corresponding asymptotic parametrix, we take
\[
 r^2 \opm_M^{\gamma-\frac{n+3}{2}} \bigl( h^{(-1)}(r,w,\Lambda_X) \bigr)
\]
with formal power series ansatz
\[
 h^{(-1)}(r,w,\Lambda_X) := \sum_{j=0}^{\infty} r^j h^{(-1)}_j(w,\Lambda_X) .
\]
for its symbol. 

\begin{remark}
For notational simplicity, we occasionally represent symbols of pseudo-differential operators by formal power series. Notwithstanding the fact that in the standard calculus such expressions are avoided due to its asymptotic character and lack of convergence in general. In our applications considered in the present work, however, we always perform calculations of parametrix symbols to infinite order and prove convergence of the derived fundamental solutions or Green's functions in hindsight.
\end{remark}

For notational simplicity we suppress the $\Lambda_X$ dependence of the symbols for a while.
As a first step, we commute the $r^{-2}$ factor to the right
\begin{eqnarray}
\nonumber
 P \tilde{\Aop} &=& r^2 \opm_M^{\gamma-\frac{n+3}{2}} \bigl( h^{(-1)}(r,w) \bigr) r^{-2} \opm_M^{\gamma-\frac{n-1}{2}} \bigl( h(r,w) \bigr) \\ \label{PA}
 &=& \opm_M^{\gamma-\frac{n-1}{2}} \bigl( h^{(-1)}(r,w+2) \bigr) \opm_M^{\gamma-\frac{n-1}{2}} \bigl( h(r,w) \bigr) .
\end{eqnarray}
Inserting the power series of the symbols and shifting all powers of $r$ to the right, we obtain
\begin{eqnarray*}
 P \tilde{A} &=& \left[ \sum_{j=0}^{\infty} r^j \opm_M^{\gamma-\frac{n-1}{2}} \bigl( h^{(-1)}_j(w+2) \bigr) \right] \left[ \sum_{i=0}^N r^i \opm_M^{\gamma-\frac{n-1}{2}} \bigl( h_i(w) \bigr) \right] \\
 &=& \sum_{j,i=0}^{\infty} r^{j+i} \opm_M^{\gamma-\frac{n-1}{2}-i} \bigl( h^{(-1)}_j(w+2-i) \bigr) \opm_M^{\gamma-\frac{n-1}{2}} \bigl( h_i(w) \bigr) \\
 &=& \sum_{j,i=0}^{\infty} r^{j+i} \opm_M^{\gamma-\frac{n-1}{2}} \bigl( h^{(-1)}_j(w+2-i) \bigr) \opm_M^{\gamma-\frac{n-1}{2}} \bigl( h_i(w) \bigr) \mod G
\end{eqnarray*}
where the last step is modulo Green's operators, which will be discused in detail below. After the manipulations, we can now define the sympbols of the parametrix in a recursive manner.
For the symbol $h^{(-1)}_0$ we get the equation
\[
 1= \opm_M^{\gamma-\frac{n-1}{2}} \bigl(  h^{(-1)}_0(w+2) \bigr) \opm_M^{\gamma-\frac{n-1}{2}} \bigl( h_0(w) \bigr)
 = \opm_M^{\gamma-\frac{n-1}{2}} \bigl(  h^{(-1)}_0(w+2) h_0(w) \bigr)
\]
and therefore
\begin{equation}
 \Big( h^{(-1)}_0(w+2)\Big) h_0(w) =1 \ \longrightarrow \ h^{(-1)}_0(w) =  h_0^{-1}(w-2) .
\label{h0m1}
\end{equation}
The equations of the symbols $h^{(-1)}_j$, for $j \geq 1$, are
\[
 0= \opm_M^{\gamma-\frac{n-1}{2}} \bigl(  h^{(-1)}_j(w+2) \bigr) \opm_M^{\gamma-\frac{n-1}{2}} \bigl( h_0(w) \bigr) + \sum_{i=1}^{j}
 \opm_M^{\gamma-\frac{n-1}{2}} \bigl( h^{(-1)}_{j-i}(w+2-i) \bigr) \opm_M^{\gamma-\frac{n-1}{2}} \bigl( h_i(w) \bigr) ,
\]
which gives
\[
 0= \bigl( h^{(-1)}_j(w+2)\bigr) h_0(w) + \sum_{i=1}^{j} \bigl(  h^{(-1)}_{j-i}(w+2-i) \bigr) h_i(w) ,
\]
and
\begin{eqnarray}
\nonumber
 h^{(-1)}_j(w) & = & -\left( \sum_{i=1}^{j} \bigl(  h^{(-1)}_{j-i}(w-i) \bigr) \bigl(  h_i(w-2) \bigr) \right) \bigl(  h^{-1}_0(w-2) \bigr) \\ \label{hm1p}
  & = & -\left( \sum_{i=1}^{j} \bigl(  h^{(-1)}_{j-i}(w-i) \bigr) \bigl(  h_i(w-2) \bigr) \right) h^{(-1)}_0(w)
\end{eqnarray}
respectively. In order to derive integral kernels, we have to know the poles of the meromorphic operator valued symbols $h^{(-1)}_j(w)$. 
It can be seen from (\ref{hm1p}) that the poles are recursively defined
such that $h^{(-1)}_j(w)$ has meromorphic terms of the form
\begin{eqnarray}
\nonumber
 j=0: & h^{(-1)}_0(w) \\ \nonumber
 j=1: & \bigl(  h^{(-1)}_0(w-1) \bigr) h^{(-1)}_0(w) \\ \label{hm1poles}
 j=2: & \bigl(  h^{(-1)}_0(w-2) \bigr) \bigl(  h^{(-1)}_0(w-1) \bigr) h^{(-1)}_0(w), \ \bigl(  h^{(-1)}_0(w-2) \bigr) h^{(-1)}_0(w) \\ \nonumber
  j=3: & \bigl(  h^{(-1)}_0(w-3) \bigr) \bigl(  h^{(-1)}_0(w-2) \bigr) \bigl(  h^{(-1)}_0(w-1) \bigr) h^{(-1)}_0(w), \ \bigl( h^{(-1)}_0(w-3) \bigr) \bigl(  h^{(-1)}_0(w-1) \bigr) h^{(-1)}_0(w)  , \\ \nonumber & \bigl(  h^{(-1)}_0(w-3) \bigr) \bigl(  h^{(-1)}_0(w-2) \bigr) h^{(-1)}_0(w), \ \bigl(  h^{(-1)}_0(w-3) \bigr) h^{(-1)}_0(w) \bigr) \\ \nonumber
  \vdots \quad & 
\end{eqnarray}
After re-emerging of $\Lambda_X$ in the symbols of the parametrix, we can perform spectral resolutions
\[
 h^{(-1)}_0(w,\Lambda_X) =  \sum_{\ell=0}^{\infty} h^{(-1)}_0(w,\lambda_{\ell}) P_{\ell}, \quad h_i(w,\Lambda_X) =  \sum_{\ell=0}^{\infty} h_i(w,\lambda_{\ell}) P_{\ell}
\]
\[
 h^{(-1)}_0(w,\lambda_{\ell}) = \frac{1}{\sum_{j=0}^{2} a_j^{(0)} (w-2)^j +b^{(0)} \lambda_{\ell}}, \quad
 h_i(w,\lambda_{\ell}) := \sum_{j=0}^{2} a_j^{(i)} w^i +b^{(i)} \lambda_{\ell}
\]
and use the orthogonality of the projection operators, i.e.~$P_{\ell} P_{\tilde{\ell}} = \delta_{\ell,\tilde{\ell}} P_{\ell}$, to get the coorresponding spectral resolution of the Mellin symbols of the  parametrix
\[
 h^{(-1)}_j(w,\Lambda_X) = \sum_{\ell=0}^{\infty} h^{(-1)}_j(w,\lambda_{\ell}) P_{\ell}
\]
with
\[
  h^{(-1)}_j(w,\lambda_{\ell}) = -\left( \sum_{i=1}^{j} \bigl(  h^{(-1)}_{j-i}(w-i,\lambda_{\ell}) \bigr) \bigl(  h_i(w-2,\lambda_{\ell}) \bigr) \right) h^{(-1)}_0(w,\lambda_{\ell})
\]
where the latter symbols are defined in a recursive manner. According to our previous discussion, the poles of $h^{(-1)}_j(w,\lambda_{\ell})$ are given by the poles of the shifted symbols $h^{(-1)}_0(w-m)$, for $m=0,1,\ldots,j$. The symbol $h^{(-1)}_0(w)$ has poles at 
\[
 w(\lambda_{\ell}) = 2-\frac{a^{(0)}_1}{2a^{(0)}_2} \pm \sqrt{\left(\frac{a^{(0)}_1}{2a^{(0)}_2} \right)^2 -\frac{a^{(0)}_0}{a^{(0)}_2} -\frac{b^{(0)}}{a^{(0)}_2} \lambda_{\ell}} .
\]
In order to simplify our discussion, we want to restrict in the following to two different types of differential operators.

\begin{definition}
Let $\tilde{A}$ be an elliptic differential operator in $\Diff_{\deg}^{2}({\cal C}^{\wedge}(X))$ of the form (\ref{DefA2}), where we assume w.l.o.g. $a^{(0)}_2 =1$ for notational simplicity.
A differential operator is referred as type-A if $\bigl( a^{(0)}_1 \bigr)^2 -4a^{(0)}_0 >0$, $b^{(0)}<0$, and as type-B if
$\bigl( a^{(0)}_1 \bigr)^2 -4a^{(0)}_0 =0$, $b^{(0)}<0$.
\end{definition}

\begin{remark}
According to our definition, the symbol $h_0^{(-1)}$ has two poles of order 1 if it corresponds to a type-A differential operator and a single pole of order 2 if it corresponds to a type-B differential operator.
\end{remark}

\begin{example}
The Laplacian $\Delta_n$ in $\mathbb{R}^n$ is of type-A for $n \geq 3$ and of type-B for $n=2$.
\end{example}

In the present work, we want to focus an type-A differential operators and leave type-B differential operators for a subsequent publication.

\subsubsection{Parametrices for type-A differential operators}
Let us consider asymptotic parametrices for type-A differential operators. For notational simplicity, we introduce the $\lambda_{\ell}$ dependent parameter
\[
 \Delta w(\lambda_{\ell}) := \sqrt{\left( \frac{a^{(0)}_1}{2} \right)^2 -a^{(0)}_0 -b^{(0)} \lambda_{\ell}} .
\]
Let us first consider the operator (\ref{DefA2}) with constant coefficients, which corresponds to the $0$-th order term in the expression of $\tilde{{\cal A}}$ with respect to powers of $r$.
\ \\
\ \\
Performing a similar calculation as in Section 1.1 we get: 
\begin{eqnarray*}
 \Pop_0u(r,\phi) & = & -\sum_{\ell=0}^{\infty} \int_{X} \int_{0}^{r} \frac{\tilde{r}^{w_{1,\ell}-n}}{r^{w_{1,\ell}-2}}
 \frac{p_{\ell}(\phi|\tilde{\phi})}{2\Delta w(\lambda_{\ell})} u(\tilde{r},\tilde{\phi}) \, \tilde{r}^{n-1} d\tilde{r} \mu_{n-1}(\tilde{\phi}) d\tilde{\phi} \\
 & & -\sum_{\ell=0}^{\infty} \int_{X} \int_{r}^{\infty} \frac{\tilde{r}^{w_{2,\ell}-n}}{r^{w_{2,\ell}-2}}
 \frac{p_{\ell}(\phi|\tilde{\phi})}{2\Delta w(\lambda_{\ell})} u(\tilde{r},\tilde{\phi}) \, \tilde{r}^{n-1} d\tilde{r} \mu_{n-1}(\tilde{\phi}) d\tilde{\phi} \\
 & = & -r^{a^{(0)}_1/2} \sum_{\ell=0}^{\infty} \int_{X}  \int_{0}^{\infty} \tilde{r}^{2-n-a^{(0)}_1/2}
 \left( \frac{r_{<}}{r_{>}} \right)^{\Delta w(\lambda_{\ell})}
 \frac{p_{\ell}(\phi|\tilde{\phi})}{2\Delta w(\lambda_{\ell})} u(\tilde{r},\tilde{\phi}) \, \tilde{r}^{n-1} d\tilde{r} \mu_{n-1}(\tilde{\phi}) d\tilde{\phi} ,
\end{eqnarray*}
with 
\[
 w_{1,\ell} := 2-\frac{a^{(0)}_1}{2} + \Delta w(\lambda_{\ell}), \quad
 w_{2,\ell} := 2-\frac{a^{(0)}_1}{2} - \Delta w(\lambda_{\ell}) ,
\]
where we choose an integration contour $\Gamma_{\frac{n+4}{2}-\gamma}$ between the two poles, i.e.,
\[
 \frac{n}{2} +\frac{a^{(0)}_1}{2} -\Delta w(\lambda_{\ell}) < \gamma < \frac{n}{2} +\frac{a^{(0)}_1}{2} +\Delta w(\lambda_{\ell}) .
\] 
This particular choice is motivated by our considerations for the Laplace operator in Section \ref{Sec-Laplace}. Eventually our choice can be justified by explicit constructions of fundamental solutions and Green's functions from the parametrices. In the following, we will provide evidence  by considering selected examples for type-A differential operators. 

The parametrix $\Pop_0$ can be represented by an integral operator
\[
 \Pop_0\,u(r,\phi) = \int_{X} \int_{0}^{\infty} K_0(r,\phi|\tilde{r},\tilde{\phi}) u(\tilde{r},\tilde{\phi}) \tilde{r}^{n-1} d\tilde{r} \mu_{n-1}(\tilde{\phi}) d\tilde{\phi}
\]
with kernel function
\[
 K_0(r,\phi|\tilde{r},\tilde{\phi}) = -r^{a^{(0)}_1/2} \tilde{r}^{2-n-a^{(0)}_1/2}\sum_{\ell=0}^{\infty}
 \left( \frac{r_{<}}{r_{>}} \right)^{\Delta w(\lambda_{\ell})}
 \frac{p_{\ell}(\phi|\tilde{\phi})}{2\Delta w(\lambda_{\ell})}
\]

\begin{lemma}
Let $\tilde{\Aop}_0$ be an elliptic type-A differential operator in ${\cal C}^{\wedge}(X)$, i.e.,
 \begin{equation}
 	\tilde{\Aop}_0 = r^{-2} \left[ \biggl(-r \frac{\partial}{\partial r} \biggr)^{2} +a_1^{(0)} \biggl(-r \frac{\partial}{\partial r} \biggr) +a_0^{(0)} +b^{(0)} \Lambda_X \right] ,
 	\label{DefA20}
 \end{equation}
with coefficients satisfying 
\begin{equation}\label{Eq26111}
 a^{(0)}_0 =-a^{(0)}_1(n-2)-(n-2)^2 . 
\end{equation}
The distribution $\Pf k_0 \in {\cal D}'(\mathbb{R}^n)$, with
\begin{equation}\label{Eq26112}
k_0(r) := \lim_{\tilde{r} \rightarrow 0} K_0(r,\phi|\tilde{r},\tilde{\phi}) = -r^{2-n}\frac{p_0}{a_1^{(0)}+2(n-2)}
\end{equation}
is a fundamental solution of the corresponding differential operator $\Aop_{0}$ in $\mathbb{R}^n$, i.e.,
\[
 \Aop_{0} \Pf k_0 = \delta .
\]
\label{lemma2}
\end{lemma}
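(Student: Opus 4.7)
The plan is to establish $\Aop_{0}\Pf k_0 = \delta$ by testing against an arbitrary $\varphi \in {\cal D}(\mathbb{R}^n)$ and verifying that $\langle \Aop_{0} \Pf k_0,\varphi\rangle = \varphi(0)$. The argument splits naturally into three steps: pointwise annihilation of $k_0$ by $\tilde{\Aop}_{0}$ away from the origin, an exhaustion of $\mathbb{R}^n$ by $\{|x|>\epsilon\}$ justified by local integrability of $r^{2-n}$, and evaluation of the surface integral at $r=\epsilon$ produced by Green's identity.

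For the first step I exploit that the lowest eigenvalue $\lambda_0 = 0$ of $\Lambda_X$ has as eigenfunction the normalised constant $u_{0,1} = 1/\sqrt{|X|}$, so the projection kernel $p_0(\phi,\tilde{\phi}) = 1/|X|$ is a constant in the angular variables and hence $k_0$ depends only on $r$ and lies in $\ker\Lambda_X$. A direct computation using $(-r\partial_r)^{j} r^{2-n} = (n-2)^{j} r^{2-n}$ reduces $\tilde{\Aop}_{0}k_0$ to a multiple of the scalar $(n-2)^2 + a_1^{(0)}(n-2) + a_0^{(0)}$, which vanishes by the constraint \eqref{Eq26111}.

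For the second and third steps, local integrability of $r^{2-n}$ on $\mathbb{R}^n$ with $n\geq 3$ gives
\[
\langle\Pf k_0,\Aop_{0}^{t}\varphi\rangle = \lim_{\epsilon\to 0^{+}}\int_{|x|>\epsilon}k_0\,\Aop_{0}^{t}\varphi\,dx
\]
by dominated convergence. On $\{|x|>\epsilon\}$ I apply the second-order Green's identity for $\Aop_{0}$; pulled back via the polar diffeomorphism \eqref{diffeom}, the $b^{(0)}\Lambda_X$ piece produces no boundary contribution because $X$ is closed, and only the successive integrations by parts of $(-r\partial_{r})^{2}$ and $a_{1}^{(0)}(-r\partial_{r})$ against the measure $r^{n-1}dr$ yield surface terms at $r=\epsilon$. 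These assemble into a bilinear form $B_\epsilon(\varphi,k_0)$ in the values of $k_{0},k_{0}',\varphi$ and $\partial_{r}\varphi$ at $r=\epsilon$, and since $\tilde{\Aop}_{0}k_0 = 0$ in the interior, Green's identity reduces to $\int_{|x|>\epsilon}k_0\,\Aop_{0}^{t}\varphi\,dx = -B_\epsilon(\varphi,k_0)$.

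Inserting $k_0(\epsilon) = C\epsilon^{2-n}$ and $k_0'(\epsilon) = -C(n-2)\epsilon^{1-n}$ with $C = -p_0/(a_{1}^{(0)}+2(n-2))$, the two terms in $B_\epsilon$ linear in $\varphi(\epsilon,\phi)$ combine to produce the coefficient $C[2(n-2)+a_{1}^{(0)}]$, while the $k_0(\epsilon)\partial_r\varphi$ contribution carries an extra factor of $\epsilon$ and is $O(\epsilon)$. Uniform continuity of $\varphi$ gives $\int_X \varphi(\epsilon,\phi)\,d\mu\to |X|\varphi(0)$ as $\epsilon\to 0$, and the identity $p_0|X|=1$ then yields $\lim_{\epsilon\to 0}(-B_\epsilon) = -C[2(n-2)+a_{1}^{(0)}]|X|\varphi(0) = \varphi(0)$, as required. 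The main obstacle I anticipate is the careful bookkeeping in the surface integral: tracking every boundary contribution generated by iterated integration by parts of $(-r\partial_{r})^{2}$ and verifying that the three terms combine precisely into the coefficient $[2(n-2)+a_{1}^{(0)}]$ that cancels the denominator of $C$; in addition, the subleading $\epsilon\partial_{r}\varphi$ boundary term and the Taylor remainder $\varphi(\epsilon,\phi)-\varphi(0)$ must be controlled uniformly in $\phi\in X$.
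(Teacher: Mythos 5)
Your proof is correct, but it takes a genuinely different route from the one in the paper. You first observe $\tilde{\Aop}_0 k_0 = 0$ pointwise away from the origin (the constraint $a_0^{(0)} = -a_1^{(0)}(n-2)-(n-2)^2$ is exactly the indicial condition making $r^{2-n}$ a homogeneous solution), then exhaust $\mathbb{R}^n$ by $\{|x|>\epsilon\}$ and collect the boundary terms at $r=\epsilon$ via Green's identity, showing they converge to $\varphi(0)$. The paper instead moves all derivatives onto the test function via the formal adjoint $\Aop_0^*$ in polar coordinates (computed in Appendix B), projects onto the $\ell = 0$ angular mode, and then exploits the fact that $k_0(r)/p_0 \propto r^{2-n}$ exactly cancels the Jacobian $r^{n-1}$ and the $r^{-2}$ prefactor, so the resulting radial integrand is an exact derivative $\partial_r \tilde{g}_0$ (plus a second term that is a derivative of $r\partial_r\tilde g_0$, which vanishes at both endpoints); the fundamental theorem of calculus then yields $g(0)$ with no $\epsilon$-limit or boundary-term bookkeeping at all. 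What each approach buys: the paper's route is shorter and avoids the delicate accounting of surface terms, because the algebraic cancellation collapses everything into a one-line integral; your route is more classical and makes the conceptual role of (\ref{Eq26111}) as the indicial equation visible, at the price of verifying carefully that the several boundary pieces assemble into the coefficient $2(n-2)+a_1^{(0)}$ and that the subleading $\epsilon\,\partial_r\varphi$ and Taylor-remainder contributions vanish uniformly in $\phi$. One small point worth making explicit in your write-up: the $b^{(0)}\Lambda_X$ term contributes nothing not only to the boundary but also to the interior identity, because $k_0$ is angularly constant and $\lambda_0 = 0$; this is implicit in your step two but deserves a sentence.
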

\begin{proof}
Under the first assumption in (\ref{Eq26111}) the limit (\ref{Eq26112}) follows immediatelly.
\ \\
\ \\
For any test function $g \in {\cal D}(\mathbb{R}^n)$ let $\tilde{g} :=\varphi^{\ast}g$ be its corresponding counterpart in $\tilde{\cal D}(C^{\wedge})$. Due to the spherical symmetry of $k_0$, we get
\begin{eqnarray*}
\lefteqn{\int_{\mathbb{R}^n} g(x) \Aop_0 \Pf k_0 \, dx} \\ 
 & = & \int_{0}^{\infty} r^{n-1} \frac{1}{r^2} \left[ \biggl( (n-2)+r \frac{\partial}{\partial r} \biggr)^2 \tilde{g}_{0}(r) +a^{(0)}_1 \biggl( (n-2)+r \frac{\partial}{\partial r} \biggr) \tilde{g}_{0}(r) +a^{(0)}_0 \tilde{g}_{0}(r) \right] \frac{k_0(r)}{p_0} dr
\end{eqnarray*}
where the right hand side only depends on the projection $\tilde{g}_0(r) := P_0 \tilde{g}$. Calculating the integrals 
\begin{eqnarray*}
\lefteqn{-\int_{0}^{\infty} r^{n-1} \frac{1}{r^2} \left( r \frac{\partial}{\partial r} \tilde{g}_{0}(r) \right) r^{2-n} \frac{1}{2\Delta w(\lambda_{0})} \, dr} \\
& = & -\int_{0}^{\infty} \partial_r \tilde{g}_{0}(r) \frac{1}{2\Delta w(\lambda_{0})} \, dr \\
& = & \lim_{r \rightarrow 0} \frac{\tilde{g}_{0}(r)}{2\Delta w(\lambda_{0})} \\
& = & \frac{g(0)}{2\Delta w(\lambda_{0})}
\end{eqnarray*}
and
\begin{eqnarray*}
\lefteqn{-\int_{0}^{\infty} r^{n-1} \frac{1}{r^2} \left( r \frac{\partial}{\partial r} \right) \left[ r \frac{\partial}{\partial r} \tilde{g}_{0}(r) \right] r^{2-n} \frac{1}{2\Delta w(\lambda_{0})} \, dr} \\
& = & -\int_{0}^{\infty} \partial_r \bigl( r \partial_r \tilde{g}_{0}(r) \bigr) \frac{1}{2\Delta w(\lambda_{0})} \, dr \\
& = & 0 ,
\end{eqnarray*}
we finally obtain
\[
 \int_{\mathbb{R}^n} g(x) \Aop_0 \Pf k_0 \, dx = g(0) . 
\]
\end{proof}

It remains to consider the higher order terms in the asymptotic parametrix construction for type-A differential operators.

\begin{proposition}
Parametrix symbols $h^{(-1)}_j(w,\lambda_{\ell})$ of higher order, i.e.~$j=1,2,\ldots$, which correspond to type-A differential operators have generically poles of order 1, however in particular cases second order poles may show up. For a symbol $h^{(-1)}_j(w,\lambda_{\ell})$ poles at most of order $2$ can appear only if 
\begin{equation}
 2\Delta w(\lambda_{\ell}) \in \{1,2,\ldots,j\} .
\label{dwcond}
\end{equation} 
If this is the case,   then $h^{(-1)}_j(w,\lambda_{\ell})$ potentially has $q_{\ell}:=j-k_{min}(\lambda_{\ell})+1$ poles of order 2 at
\[
 w_m(\lambda_{\ell}) = 2-\frac{a_1^{(0)}}{2}+m+\Delta w(\lambda_{\ell}), \quad m=0,1,\ldots,j-k_{min}(\lambda_{\ell}) ,
\]
with $k_{min}(\lambda_{\ell})= 2\Delta w(\lambda_{\ell})$.
\label{proposition3}
\end{proposition}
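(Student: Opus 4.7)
The plan is to read off the pole structure of $h^{(-1)}_j(w,\lambda_\ell)$ directly from the enumeration (\ref{hm1poles}) of its meromorphic constituents together with the recursion (\ref{hm1p}). By induction on $j$, each constituent is a finite product of shifted zeroth-order symbols $h^{(-1)}_0(w-m,\lambda_\ell)$, with $m$ ranging over a subset of $\{0,1,\ldots,j\}$ that always contains $0$ and $j$, multiplied by polynomial factors $h_i(w-2,\lambda_\ell)$ which contribute no singularities. For a type-A operator each factor $h^{(-1)}_0(w-m,\lambda_\ell)$ carries exactly two simple poles, located at
\[
 w = 2-\tfrac{a_1^{(0)}}{2}+m\pm\Delta w(\lambda_\ell),
\]
so any pole of $h^{(-1)}_j$ arises either from a single such factor (giving order $1$) or from a coincidence of simple poles of several factors (giving a higher-order pole).

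First I would rule out poles of order $\geq 3$. Three distinct factors $h^{(-1)}_0(w-m_i)$, $i=1,2,3$, all singular at a common $w_\ast$ would require $w_\ast = 2-\tfrac{a_1^{(0)}}{2}+m_i+s_i\Delta w(\lambda_\ell)$ with pairwise distinct $m_i\in\{0,\ldots,j\}$ and signs $s_i\in\{+1,-1\}$. Since two of the three signs must agree, the corresponding $m_i$ then coincide, a contradiction. Hence every pole has order at most $2$.

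Next I would determine when a double pole actually occurs. Two distinct factors $h^{(-1)}_0(w-m_1)$ and $h^{(-1)}_0(w-m_2)$ with $0\leq m_1<m_2\leq j$ share a pole if and only if $m_1+\Delta w(\lambda_\ell)=m_2-\Delta w(\lambda_\ell)$, i.e., $m_2-m_1=2\Delta w(\lambda_\ell)$. This forces $2\Delta w(\lambda_\ell)$ to be an integer in $\{1,2,\ldots,j\}$, which is precisely condition (\ref{dwcond}). Setting $k_{\min}(\lambda_\ell):=2\Delta w(\lambda_\ell)$, the admissible pairs are $(m,m+k_{\min})$ for $m=0,1,\ldots,j-k_{\min}$, and the coalesced pole of each such pair sits at
\[
 w_m(\lambda_\ell) = 2-\tfrac{a_1^{(0)}}{2}+m+\Delta w(\lambda_\ell),
\]
yielding exactly $q_\ell=j-k_{\min}(\lambda_\ell)+1$ candidate second-order pole positions, as claimed.

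The main obstacle I anticipate is the combinatorial bookkeeping that justifies the word \emph{potentially}: one must verify inductively, using the recursion (\ref{hm1p}), that the family of shift-subsets appearing in $h^{(-1)}_j$ is rich enough so that, for every admissible pair $(m,m+k_{\min})$, at least one summand of $h^{(-1)}_j(w,\lambda_\ell)$ contains both $h^{(-1)}_0(w-m,\lambda_\ell)$ and $h^{(-1)}_0(w-m-k_{\min},\lambda_\ell)$ as factors. This ensures the potential existence of the second-order pole; whether it truly survives depends on possible cancellations of residues from distinct summands or on accidental zeros of the polynomial factors $h_i(w-2,\lambda_\ell)$ at $w_m(\lambda_\ell)$, which is exactly why the proposition only asserts ``potentially''.
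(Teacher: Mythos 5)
Your proof is correct and follows essentially the same route as the paper's: read the candidate pole locations $2-\tfrac{a_1^{(0)}}{2}+m\pm\Delta w(\lambda_\ell)$, $m\in\{0,\ldots,j\}$, off the product structure in (\ref{hm1poles}), observe that a coincidence forces $k-m=2\Delta w(\lambda_\ell)$, and deduce condition (\ref{dwcond}) together with the count $q_\ell=j-k_{\min}(\lambda_\ell)+1$. Your explicit pigeonhole argument on the signs (ruling out order $\geq 3$), and your remark that ``potentially'' covers residue cancellations across summands as well as numerator zeros from the polynomial factors $h_i$, sharpen two points the paper states more tersely, the latter as ``provided $w-w_0$ does not divide the corresponding nominator.''
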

\begin{proof}
The symbols $h^{(-1)}_j(w,\lambda_{\ell})$ given by the recursive formula
(\ref{hm1p}) have zeros in the denominator, cf.~(\ref{hm1poles}), at
\[
 w_{1,m}(\lambda_{\ell})=2-\frac{a_1^{(0)}}{2}+m+\Delta w(\lambda_{\ell}), \quad w_{2,k}(\lambda_{\ell})=2-\frac{a_1^{(0)}}{2}+k-\Delta w(\lambda_{\ell}), \ \ m,k=0,\ldots,j .
\]
Therefore a zero of order 2 can appear only if 
\[
 k-m = 2\Delta w(\lambda_{\ell}) ,
\]
which leads to a pole of order 2 at $w_0:=w_{1,j}=w_{1,k}$ provided $w-w_0$ does not divide the corresponding nominator. So let $k_{min}(\lambda_l):= 2\Delta w(\lambda_{\ell})$, then $h^{(-1)}_j(w,\lambda_{\ell})$ potentially has $q_{\ell}=j-k_{min}(\lambda_l)+1$ poles of order 2 at
\[
 w_m = 2-\frac{a_1^{(0)}}{2}+m+\Delta w(\lambda_{\ell}), \quad m=0,1,\ldots,j-k_{min}(\lambda_{\ell}) .
\]
\end{proof}

Let us assume that the symbol $h^{(-1)}_j(w,\lambda_{\ell})$ has $2j+2-2q_{\ell}$, $q_{\ell} \geq 0$, simple poles,
which we arrange for each $\lambda_{\ell}$ in descending order, i.e.,
\[
 w_1(\lambda_{\ell}) > w_2(\lambda_{\ell}) > \cdots > w_{2p+2-2q_{\ell}}(\lambda_{\ell})
\]
where $w_1(\lambda_{\ell}),\ldots,w_{j_{\ell}}(\lambda_{\ell})$ lies right and $w_{j_{\ell}+1}(\lambda_{\ell}),\ldots,w_{2p+2}(\lambda_{\ell})$ left of the integration contour.
The remaining poles of order 2, denoted by $w^{(2)}_k(\lambda_{\ell})$, $k=1,\ldots,q_{\ell}$, are accordingly on the right side of the integration contour and are 
as well arranged in descending order, i.e.,
\[
 w^{(2)}_1(\lambda_{\ell}) > w^{(2)}_2(\lambda_{\ell}) > \cdots > w^{(2)}_{q_{\ell}}(\lambda_{\ell}) .
\]
The residue of such a poles is given by
\begin{eqnarray*}
 \Res_{w^{(2)}_k(\lambda_{\ell})} \left(\frac{r}{\tilde{r}} \right)^{-w} h^{(-1)}_j(w) & = & 
 \lim_{w \rightarrow w^{(2)}_k} \frac{d}{dw} \left[ \bigl( w-w^{(2)}_k(\lambda_{\ell}) \bigr)^2 \left(\frac{r}{\tilde{r}} \right)^{-w} h^{(-1)}_j(w) \right] \\
 & = & -\ln \left(\frac{r}{\tilde{r}} \right) \left(\frac{\tilde{r}}{r} \right)^{w^{(2)}_k} \Res_{w^{(2)}_k(\lambda_{\ell})}^{(1)} \bigl( h^{(-1)}_j(\cdot,\lambda_{\ell}) \bigr)
 +\left(\frac{\tilde{r}}{r} \right)^{w^{(2)}_k} \Res_{w^{(2)}_k(\lambda_{\ell})}^{(2)} \bigl( h^{(-1)}_j(\cdot,\lambda_{\ell}) \bigr)
\end{eqnarray*}
with
\[
 \Res_{w^{(2)}_k(\lambda_{\ell})}^{(1)} \bigl( h^{(-1)}_j(\cdot,\lambda_{\ell}) \bigr) := \lim_{w \rightarrow w^{(2)}_k(\lambda_{\ell})} \left[ \bigl( w-w^{(2)}_k(\lambda_{\ell}) \bigr)^2 h^{(-1)}_j(w) \right] ,
\]
\[
 \Res_{w^{(2)}_k(\lambda_{\ell})}^{(2)} \bigl( h^{(-1)}_j(\cdot,\lambda_{\ell}) \bigr) := \lim_{w \rightarrow w^{(2)}_k(\lambda_{\ell})} \frac{d}{dw} \left[ \bigl( w-w^{(2)}_k(\lambda_{\ell}) \bigr)^2 h^{(-1)}_j(w) \right] .
\]
Putting things together, we obtain
\begin{eqnarray*}
 \Pop_j u(r,\phi) & = & -\sum_{\ell=0}^{\infty} \sum_{k=1}^{j_{\ell}} \int_{X} \int_{0}^{r} \frac{\tilde{r}^{w_k(\lambda_{\ell})-n}}{r^{w_k(\lambda_{\ell})-2}}
 \bigl( \Res_{w_k(\lambda_{\ell})} h^{(-1)}_j(\cdot,\lambda_{\ell}) \bigr)  p_{\ell}(\phi|\tilde{\phi}) u(\tilde{r},\tilde{\phi}) \, \tilde{r}^{n-1} d\tilde{r} \mu_{n-1}(\tilde{\phi}) d\tilde{\phi} \\
 & & +\sum_{\ell=0}^{\infty} \sum_{k=j_{\ell}+1}^{2p+2-2q_{\ell}} \int_{X} \int_{r}^{\infty} \frac{\tilde{r}^{w_k(\lambda_{\ell})-n}}{r^{w_k(\lambda_{\ell})-2}}
 \bigl( \Res_{w_k(\lambda_{\ell})} h^{(-1)}_j(\cdot,\lambda_{\ell}) \bigr)  p_{\ell}(\phi|\tilde{\phi}) u(\tilde{r},\tilde{\phi}) \, \tilde{r}^{n-1} d\tilde{r} \mu_{n-1}(\tilde{\phi}) d\tilde{\phi} \\
  & & +\sum_{\ell=0}^{\infty} \sum_{k=1}^{q_{\ell}} \int_{X} \int_{0}^{r} \frac{\tilde{r}^{w_k^{(2)}(\lambda_{\ell})-n}}{r^{w_k^{(2)}(\lambda_{\ell})-2}}
 \left[ \ln \left(\frac{r}{\tilde{r}} \right) \Res_{w_k^{(2)}(\lambda_{\ell})}^{(1)} h^{(-1)}_j(\cdot,\lambda_{\ell}) \right. \\
 & & -\left. \Res_{w_k^{(2)}(\lambda_{\ell})}^{(2)} h^{(-1)}_j(\cdot,\lambda_{\ell}) \right] p_{\ell}(\phi|\tilde{\phi}) u(\tilde{r},\tilde{\phi}) \, \tilde{r}^{n-1} d\tilde{r} \mu_{n-1}(\tilde{\phi}) d\tilde{\phi}
\end{eqnarray*}

Let us summarize our calculations in the following lemma

\begin{lemma}
\label{lemmaPtypA}
For a type-A differential operator $\tilde{\Aop}$, the asymptotic parametrix given by the series
\[
 \Pop =  r^2 \sum_{j=0}^{\infty} r^j \opm_M^{\gamma-\frac{n+3}{2}} \bigl( h^{(-1)}_j(w,\Lambda_X) \bigr)
\]
can be represented by an integral operator
\[
 \Pop u(r,\phi) = \sum_{j=0}^{\infty} r^j \int_{X} \int_{0}^{\infty} K_j(r,\phi|\tilde{r},\tilde{\phi}) u(\tilde{r},\tilde{\phi}) \tilde{r}^{n-1} d\tilde{r} \mu_{n-1}(\tilde{\phi}) d\tilde{\phi}
\]
with kernel functions
\begin{eqnarray}
\nonumber
 K_j(r,\phi|\tilde{r},\tilde{\phi}) & = & -H(r-\tilde{r}) \sum_{\ell=0}^{\infty} \sum_{k=1}^{j_{\ell}} r^{2-w_k(\lambda_{\ell})} \tilde{r}^{w_k(\lambda_{\ell})-n} \bigl( \Res_{w_k(\lambda_{\ell})} h^{(-1)}_j(\cdot,\lambda_{\ell}) \bigr) p_{\ell}(\phi|\tilde{\phi}) \\ \nonumber
 & & +H(r-\tilde{r}) \sum_{\ell=0}^{\infty} \sum_{k=1}^{q_{\ell}} r^{2-w_k^{(2)}(\lambda_{\ell})} \tilde{r}^{w_k^{(2)}(\lambda_{\ell})-n} \left[ \ln \left(\frac{r}{\tilde{r}} \right) \Res_{w_k^{(2)}(\lambda_{\ell})}^{(1)} h^{(-1)}_j(\cdot,\lambda_{\ell}) \right. \\ \label{formulaKp}
 & & \left. -\Res_{w_k^{(2)}(\lambda_{\ell})}^{(2)} h^{(-1)}_j(\cdot,\lambda_{\ell}) \right] p_{\ell}(\phi|\tilde{\phi}) \\ \nonumber
  & & + H(\tilde{r} -r) \sum_{\ell=0}^{\infty} \sum_{k=j_{\ell}+1}^{2p+2-2q_{\ell}} r^{2-w_k(\lambda_{\ell})} \tilde{r}^{w_k(\lambda_{\ell})-n} \bigl( \Res_{w_k(\lambda_{\ell})} h^{(-1)}_j(\cdot,\lambda_{\ell}) \bigr) p_{\ell}(\phi|\tilde{\phi}) .
\end{eqnarray}
Here $H$ denotes the Heaviside function, i.e., $H(r-\tilde{r})=\begin{cases}
0,\quad r<\tilde{r}\\
1,\quad r>\tilde{r}
\end{cases}$.
\end{lemma}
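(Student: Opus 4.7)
The plan is to prove the representation term by term in $j$. Fix $j \in \mathbb{N}_0$ and $u \in \tilde{\cal D}(\mathcal{C}^{\wedge}(X))$. Starting from the definition of $\opm_M^{\gamma-\frac{n+3}{2}}$, analogous to equation (\ref{Pintegral}), I would write
\[
 r^{j+2} \opm_M^{\gamma-\frac{n+3}{2}}\bigl(h^{(-1)}_j(\cdot,\Lambda_X)\bigr) u(r,\phi) = r^{j+2} \int_{\Gamma} \int_{0}^{\infty} \Big(\tfrac{r}{\tilde{r}}\Big)^{-w} h^{(-1)}_j(w,\Lambda_X) u(\tilde{r},\phi) \,\tfrac{d\tilde{r}}{\tilde{r}}\, \tfrac{dw}{2\pi i},
\]
where $\Gamma := \Gamma_{\frac{n+4}{2}-\gamma}$ is the vertical contour selected so that the ellipticity condition on the conormal symbol is met, and $\gamma$ is pinned down as in the calculation for $\Pop_0$. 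I would then insert the spectral resolution $h^{(-1)}_j(w,\Lambda_X) = \sum_{\ell} h^{(-1)}_j(w,\lambda_{\ell}) P_{\ell}$ and write $P_{\ell}$ as an integral operator against the kernel $p_{\ell}(\phi|\tilde{\phi})$, so that the whole expression becomes a sum over $\ell$ of iterated integrals in $w$, $\tilde{r}$, and $\tilde{\phi}$.

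The heart of the proof is the evaluation of the Mellin contour integral. I would split $\int_0^{\infty} d\tilde{r}/\tilde{r} = \int_0^r + \int_r^{\infty}$ and note that for $\tilde{r} < r$ the factor $(r/\tilde{r})^{-w}$ decays as $\Re w \to +\infty$, so I can close $\Gamma$ to the right with clockwise orientation; for $\tilde{r} > r$ the factor decays as $\Re w \to -\infty$, so I close $\Gamma$ to the left with counter-clockwise orientation. Cauchy's residue theorem then converts the contour integral into $-\sum \Res_{\text{right}}$ in the first case and $+\sum \Res_{\text{left}}$ in the second case. By Proposition \ref{proposition3} the poles to be collected on the right come in two flavors: simple poles at the abscissae $w_k(\lambda_\ell)$ ($k=1,\dots,j_{\ell}$) and, when $2\Delta w(\lambda_{\ell})\in\{1,\dots,j\}$, second-order poles at $w_k^{(2)}(\lambda_{\ell})$ ($k=1,\dots,q_{\ell}$). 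For a simple pole the residue supplies a factor $(\tilde{r}/r)^{w_k(\lambda_{\ell})}\,\Res_{w_k(\lambda_{\ell})} h^{(-1)}_j$; for a double pole, differentiating $(w-w_k^{(2)})^2 (r/\tilde{r})^{-w} h^{(-1)}_j(w)$ at $w=w_k^{(2)}$ produces precisely the two summands $-\ln(r/\tilde{r})(\tilde{r}/r)^{w_k^{(2)}}\Res^{(1)}_{w_k^{(2)}}h^{(-1)}_j + (\tilde{r}/r)^{w_k^{(2)}}\Res^{(2)}_{w_k^{(2)}}h^{(-1)}_j$ appearing in (\ref{formulaKp}). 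After absorbing the outside $r^{j+2}$ into the powers $r^{2-w_k(\lambda_\ell)}$ and changing the measure from $d\tilde{r}/\tilde{r}$ to $\tilde{r}^{n-1}\,d\tilde{r}$ (a factor of $\tilde{r}^{-n}$), one reads off the claimed kernel $K_j$, with the Heaviside factors $H(r-\tilde{r})$ and $H(\tilde{r}-r)$ recording which side of the contour was closed.

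The step I expect to be the main obstacle is the justification of the contour closure: one must show that the semicircular arcs at infinity contribute nothing in the limit, and that Fubini's theorem applies so the $dw$ and $d\tilde{r}$ integrals may be exchanged. For the arc contribution this reduces to quantitative estimates on $h^{(-1)}_j(w,\lambda_{\ell})$ as $|\Im w|\to\infty$, which follow from its explicit rational form in $w$ coming from (\ref{hm1p}) together with the fact that the denominator factors $h_0(w-m,\lambda_\ell)$ are quadratic in $w$; this gives polynomial decay along vertical lines away from the poles, more than sufficient to kill the arcs after multiplication by the decaying exponential $(r/\tilde{r})^{-w}$. The Fubini step is then routine on $u\in\tilde{\cal D}(\mathcal{C}^{\wedge}(X))$, which has compact support away from $r=0$ and $r=\infty$. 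Finally, summation in $\ell$ is formal at this stage; convergence of the resulting expansion will, as the preceding Remark already warns, be verified in hindsight on each concrete example. Collecting the $\ell$-sum, the $k$-sums over simple and double poles, and the Heaviside splitting for $\tilde{r}\lessgtr r$ yields exactly the formula (\ref{formulaKp}) and completes the identification of $\Pop$ as the stated integral operator.
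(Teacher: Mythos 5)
Your argument follows the paper's approach exactly: the paper states this lemma as a summary of the calculation immediately preceding it (Mellin integral representation, spectral resolution of $\Lambda_X$, splitting the radial integral at $\tilde{r}=r$, closing the contour to the right/left, and collecting residues at simple poles plus the logarithmic contributions from order-two poles). Your added remarks on the decay of $h^{(-1)}_j(w,\lambda_{\ell})$ along vertical lines and the Fubini justification, which the paper leaves implicit, are a correct amplification but do not change the route.
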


Lemma \ref{lemma2} shows that for $a^{(0)}_0 =-a^{(0)}_1(n-2)-(n-2)^2$, the limit $\tilde{r} \rightarrow 0$ of $K_0$ yields a fundamental solution of $\Aop_0$. Under these conditions, we get $2\Delta w(\lambda_{0}) = a^{(0)}_1+2(n-2)$, which can lead, according to Proposition \ref{proposition3}, for integer $a^{(0)}_1$ to poles of order 2 in the asymptotic parametrix construction if (\ref{dwcond}) is satisfied. In such a case, due to the presence of logarithmic terms, it is not possible to take the limit $\tilde{r} \rightarrow 0$ of $K_j$ for $j \geq 2\Delta w(\lambda_{0})$. It should be emphasized, that this problem only concerns $\lambda_0$, because for higher eigenvalues, the prefactors $\tilde{r}^{w_k(\lambda_{\ell})-n}$, $\ell=1,2,\ldots$, have positive exponents and the terms $\tilde{r}^{w_k(\lambda_{\ell})-n} \ln(\tilde{r})$ vanish in the limit $\tilde{r} \rightarrow 0$.
Furthermore it might happen, that poles are located on the real axis between the integration contour $\Gamma_{\frac{n+4}{2}-\gamma}$ and the pole $w_{j_n}(\lambda_0)=n$. According to our construction, we only assume that $\Gamma_{\frac{n+4}{2}-\gamma}$ cuts the real axis between the poles $w_{j_{n-1}}(\lambda_0)=4-a^{(0)}_1-n$ and $w_{j_n}(\lambda_0)=n$, so depending on the choice of $\gamma$ such cases might occur.    

For $r > \tilde{r}$, we decompose the kernel according to
\[
 K_j(r,\phi|\tilde{r},\tilde{\phi}) = K^{0)}_j(r,\phi|\tilde{r},\tilde{\phi}) + K^{(1)}_j(r,\phi|\tilde{r},\tilde{\phi}) \quad (r > \tilde{r})
\]
with 
\begin{eqnarray*}
 K^{(0)}_j(r,\phi|\tilde{r},\tilde{\phi}) & := & -\sum_{\ell=0}^{\infty} \sum_{k=1}^{j_{\ell}} H_{0} \bigl( w_k(\lambda_{\ell})-n \bigr) r^{2-w_k(\lambda_{\ell})} \tilde{r}^{w_k(\lambda_{\ell})-n} \bigl( \Res_{w_k(\lambda_{\ell})} h^{(-1)}_j(\cdot,\lambda_{\ell}) \bigr) p_{\ell}(\phi|\tilde{\phi}) \\ 
 & & +\sum_{\ell=0}^{\infty} \sum_{k=1}^{q_{\ell}} r^{2-w_k(\lambda_{\ell})} \tilde{r}^{w_k(\lambda_{\ell})-n} \left[ \ln(r) \Res_{w_k^{(2)}(\lambda_{\ell})}^{(1)} h^{(-1)}_j(\cdot,\lambda_{\ell}) \right. \\  & & \left. -\Res_{w_k^{(2)}(\lambda_{\ell})}^{(2)} h^{(-1)}_j(\cdot,\lambda_{\ell}) \right] p_{\ell}(\phi|\tilde{\phi})
\end{eqnarray*}
\begin{eqnarray*}
 K^{(1)}_j(r,\phi|\tilde{r},\tilde{\phi}) & := & -\sum_{\ell=0}^{\infty} \sum_{k=1}^{j_{\ell}} H_{1} \bigl( n-w_k(\lambda_{\ell}) \bigr) r^{2-w_k(\lambda_{\ell})} \tilde{r}^{w_k(\lambda_{\ell})-n} \bigl( \Res_{w_k(\lambda_{\ell})} h^{(-1)}_j(\cdot,\lambda_{\ell}) \bigr) p_{\ell}(\phi|\tilde{\phi}) \\
  & & -\sum_{\ell=0}^{\infty} \sum_{k=1}^{q_{\ell}} r^{2-w_k(\lambda_{\ell})} \tilde{r}^{w_k(\lambda_{\ell})-n} \ln(\tilde{r}) \Res_{w_k^{(2)}(\lambda_{\ell})}^{(1)} h^{(-1)}_j(\cdot,\lambda_{\ell}) p_{\ell}(\phi|\tilde{\phi}) ,
\end{eqnarray*}
where the Heavyside functions are defined at $0$ by $H_{0}(0)=1$, $H_{1}(0)=0$, respectively.
Taking the limit $\tilde{r} \rightarrow 0$ of the average of $K^{0)}_j$, we obtain 
\begin{eqnarray}
\nonumber
 k_j(r) := \lim_{\tilde{r} \rightarrow 0} K^{0)}_j(r,\phi|\tilde{r},\tilde{\phi}) & = & -r^{2-n} \sum_{k=1}^{j_{0}} \delta \bigl( w_k(\lambda_{0})-n \bigr) \bigl( \Res_{w_k(\lambda_{0})} h^{(-1)}_j(\cdot,\lambda_{0}) \bigr) p_{0} \\ \nonumber
 & & +r^{2-n}\sum_{k=1}^{q_{0}} \delta \bigl( w_k^{(2)}(\lambda_{0})-n \bigr) \left[ \ln(r) \Res_{w_k^{(2)}(\lambda_{0})}^{(1)} h^{(-1)}_j(\cdot,\lambda_{0}) \right. \\ \label{kp}
 & & \left. -\Res_{w_k^{(2)}(\lambda_{0})}^{(2)} h^{(-1)}_j(\cdot,\lambda_{0}) \right] p_{0}
\end{eqnarray}

\begin{remark}
Logarithmic terms can only appear for $j \geq 2\Delta w(\lambda_{0})$.
For each order $j$ of the asymptotic parametrix contruction, there are only a finite number of non vanishing terms in the sum over $\ell$. Furthermore, there is at most only one nonvanishing term on the right hand side of (\ref{kp}).  
\end{remark}

\section{Shifted Laplace operators}
\label{shiftedLaplace}
As a first example for the asymptotic parametrix construction, we consider the shifted Laplace operator $\tilde{\Delta}_n -\kappa^2$ in dimension $n \geq 3$ with $\tilde{\Delta}_n$ given by (\ref{Lspher}). 
The shifted Laplace operator is of type-A with base $X=S^{n-1}$ and $\Lambda_X=-\Delta_{S^{n-1}}$.
Its Mellin symbol has the form
\[
 h(w)=h_0(w) +r^2h_2(w)
\]
with $h_0$ and $h_2$ given by (\ref{mL}) and $-\kappa^2$, respectively. Application of the recursion formula (\ref{hm1p}) yields
\begin{equation}
 h^{(-1)}_{2m}(w) = \kappa^{2m} \prod_{j=0}^m h^{(-1)}_0(w-2j) \quad \mbox{and} \quad h^{(-1)}_{2m+1}=0 \quad \mbox{for} \ m=0,1,\ldots ,
\label{h2m}
\end{equation}
where $h^{(-1)}_0$ is given by (\ref{poles}). 

\subsection{Fudamental solutions from the parametrix}
To exemplify our approach, we only consider the cases $n=3,4$ in the following proposition. However it is clear that analogous calculations can be performed in any dimension $\geq 3$. 

\begin{proposition}
For $\Delta_n -\kappa^2$, $n=3,4$, the asymptotic parametrix construction, with integration contour $\Gamma_{\frac{n+4}{2}-\gamma}$ taken for $2-\frac{n}{2} < \gamma < 3-\frac{n}{2}$, yields a fundamental solution
\[
 u = \Pf k(r) \ \mbox{with} \  k(r) := k_0(r) +rk_1(r) +r^2k_2(r) \cdots ,
\]
where the functions $k_p$, $p=0,1,\ldots$, are given by (\ref{kp}).
\end{proposition}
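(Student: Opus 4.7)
The strategy is to prove $(\Delta_{n}-\kappa^{2})\,\Pf k=\delta$ in $\mathcal{D}'(\mathbb{R}^{n})$ by a telescoping cancellation built on Lemma~\ref{lemma2} together with the recursion (\ref{h2m}).

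The first step is to identify the constant coefficient part of $\Delta_{n}-\kappa^{2}$ with the type-A operator $\tilde{\Aop}_{0}=\tilde{\Delta}_{n}$ of Lemma~\ref{lemma2}. Reading off $a_{1}^{(0)}=-(n-2)$ and $a_{0}^{(0)}=0$ from (\ref{Lspher}), one verifies the identity $a_{0}^{(0)}=-a_{1}^{(0)}(n-2)-(n-2)^{2}=0$ together with the type-A conditions for $n\geq 3$, so Lemma~\ref{lemma2} yields $\Delta_{n}\,\Pf k_{0}=\delta$. Since $\Delta_{n}-\kappa^{2}$ differs from $\Delta_{n}$ only by multiplication by the constant $-\kappa^{2}$, and since $k_{j}=0$ for odd $j$ by (\ref{h2m}), the proposition reduces to establishing the telescoping identity
\[
\Delta_{n}\,\Pf\bigl(r^{2m}k_{2m}\bigr)=\kappa^{2}\,\Pf\bigl(r^{2m-2}k_{2m-2}\bigr)\quad\text{in }\mathcal{D}'(\mathbb{R}^{n}),\qquad m\geq 1,
\]
because summing this over $m$ cancels exactly against the $-\kappa^{2}\,\Pf k$ contribution and leaves $(\Delta_{n}-\kappa^{2})\,\Pf k=\delta$.

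Verifying this identity at each level is the core of the proof. The symbol recursion $h_{2m}^{(-1)}(w)=\kappa^{2}h_{0}^{(-1)}(w)h_{2m-2}^{(-1)}(w-2)$ of (\ref{h2m}), combined with the contour $\Gamma_{\frac{n+4}{2}-\gamma}$ for $2-\frac{n}{2}<\gamma<3-\frac{n}{2}$ (which ensures that only the residue at $w=n$ of the $\ell=0$ branch survives the limit $\tilde r\to 0$ in (\ref{kp})), translates directly into a residue recursion among the coefficient functions $k_{2m}$. For $n=3$ the relevant poles at $w=n=3$ remain simple and $k_{2m}(r)=c_{2m}r^{-1}$; the identity then reduces to matching the elementary computation $\Delta_{3}(r^{2m-1})=2m(2m-1)r^{2m-3}$ against the residue relation $2m(2m-1)c_{2m}=\kappa^{2}c_{2m-2}$ read off from $h_{2m}^{(-1)}$. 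For $n=4$ the condition (\ref{dwcond}) is met for $j\geq 2$, so a second order pole at $w=4$ appears and produces logarithmic factors in $k_{2m}$; the identity nevertheless survives because $\Delta_{n}\ln r=2r^{-2}$ carries no Dirac contribution for $n\geq 3$, and the analogous classical calculation of $\Delta_{4}(r^{s}\ln r)$ and $\Delta_{4}(r^{s})$ closes the recursion without introducing extra delta terms.

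The main technical obstacle is justifying termwise application of $\Delta_{n}$ to the infinite series $\Pf k=\sum_{m\geq 0}\Pf(r^{2m}k_{2m})$. The residues extracted from the explicit form of $h_{2m}^{(-1)}$ decay like $\kappa^{2m}/(2m)!$, so $\Pf k$ defines an analytic function of $r$ on $(0,\infty)$ (essentially of $\cosh(\kappa r)/r^{n-2}$ type in three dimensions, with a $\log$-corrected companion in dimension four), and the entire $r^{2-n}$ singularity at the origin is carried by $k_{0}$ alone. Once uniform convergence of the series together with its first two radial derivatives on compact subsets of $\mathbb{R}^{n}\setminus\{0\}$ is established, the telescoping computation above yields $(\Delta_{n}-\kappa^{2})\,\Pf k=\delta$ in $\mathcal{D}'(\mathbb{R}^{n})$.
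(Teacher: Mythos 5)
Your proposal is correct but takes a genuinely different route from the paper. The paper computes the coefficients $k_{2m}$ explicitly from the residue formulas, sums the series to a closed form (e.g., $-\frac{1}{4\pi r}\cosh(\kappa r)$ for $n=3$, and the modified Bessel expression (\ref{krLk}) for $n=4$), and then invokes the \emph{known} classical fundamental solution of $\Delta_n-\kappa^2$ from Schwartz, showing that the computed series differs from it only by a smooth function in the kernel of the shifted Laplacian. In contrast, you prove $(\Delta_n-\kappa^2)\Pf k=\delta$ directly, by a telescoping cancellation: Lemma~\ref{lemma2} gives the base case $\Delta_n\Pf k_0=\delta$, the symbol recursion (\ref{h2m}) translates into $\Delta_n\Pf(r^{2m}k_{2m})=\kappa^2\Pf(r^{2m-2}k_{2m-2})$ at each order, and summing yields the result. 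Your route is more self-contained, since it requires no external knowledge of the classical fundamental solution; the paper's route buys an explicit identification of the series with a recognizable special function and makes the ``kernel of the operator'' correction term visible. Both are logically valid.

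Two points deserve tightening. First, your remark that $\Delta_n\ln r=2r^{-2}$ is correct only for $n=4$; in general $\Delta_n\ln r=(n-2)r^{-2}$, and the claimed telescoping identity for $n=4$ with the $\ln r$ and harmonic-number corrections in $k_{2m}$ is a genuine combinatorial identity (the $Q_m(4)$ and $Q_m'(4)$ terms do close the recursion, using $4m(m-1)Q_m(4)=Q_{m-1}(4)$ and a corresponding relation for $Q_m'$), but you assert it without displaying the verification, which is the heart of the matter when second-order poles appear. Second, to pass $\Delta_n$ through the infinite sum as a distribution, the criterion should be phrased as $L^1_{\mathrm{loc}}$ (hence $\mathcal{D}'$) convergence of the partial sums of $\Pf k$, together with the continuity of distributional derivatives on $\mathcal{D}'$; uniform convergence of the series and its radial derivatives on compacta of $\mathbb{R}^n\setminus\{0\}$ alone does not control behaviour at the origin, which is where the only singularity and the only delta contribution live. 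Since for $m\geq 1$ each $r^{2m}k_{2m}$ is locally integrable in $\mathbb{R}^n$ (and $\Delta_n(r^{2m}k_{2m})$ is as well, with no Dirac term), the $L^1_{\mathrm{loc}}$ convergence you already have from the $\kappa^{2m}/(2m)!$-type decay of the residues supplies what is actually needed.
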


\begin{proof}
According to (\ref{h2m}), the symbols $h^{(-1)}_{2m,0}$ have a pole at $n$ if $2+2j$ or $n+2j$ equals $n$ for some $j \in \{0,1,\ldots,m\}$. Therefore for $n$ odd, only poles of order 1 appear, whereas for $n$ even, poles of order 1 and 2 show up. For an admissible $\gamma$, no poles appear on the strip between the integration contour and the pole at $n$. 

\paragraph{Case 1: $n$ odd} \ \\
In each symbol $h^{(-1)}_{2m}$, there is only for $j=0$ a pole of order 1 at $n$. Formulas (\ref{kp}) and (\ref{h2m}) yield
\[
 k_{2m}(r) = -r^{2-n} \frac{(-1)^m\kappa^{2m}p_0}{2^m m!} \prod_{j=0}^m \frac{1}{n-2j-2}, \ 
 \mbox{with} \ p_0=\frac{(n-2)!!}{2^{\frac{n+1}{2}}\pi^{\frac{n-1}{2}}} .
\] 
In particular, for $n=3$ we get
\begin{equation}
 k_{2m}(r) = -r^{-1} \frac{\kappa^{2m}}{4\pi(2m)!}
\label{k2m3d}
\end{equation}
and
\[
 k(r) = \sum_{j=0}^{\infty} r^j k_j(r) = -\frac{1}{4\pi r} \sum_{m=0}^{\infty} \frac{(\kappa r)^{2m}}{(2m)!}= -\frac{1}{4\pi r} \cosh(\kappa r)
\]
This can be compared with the standard fundamental solution, cf.~\cite{Schwartz}, 
\begin{equation}
 u(x) = \Pf -\frac{1}{4\pi r} e^{-\kappa r} . 
\label{un3}
\end{equation}
It can be easily seen, the difference between both fundamental solutions
\[
 \Pf -\frac{1}{4\pi r} \bigl( \cosh(\kappa r) - e^{-\kappa r} \bigr) = \Pf -\frac{1}{4\pi r} \sinh(\kappa r)
\] 
correspond to a smooth harmonic function in $\mathbb{R}^3$.

\paragraph{Case 2: $n$ even} \ \\
For $p<n-2$, the symbols $h^{(-1)}_{2m}$ have a pole of order 1 and for $p \geq n$ a pole of order 2 at $n$.
Let
\[
 Q_0(w) := \frac{1}{w-2} \ \ \mbox{and} \ \  
 Q_m(w) := \prod_{j=0, 2j+2 \neq n}^{m} \frac{1}{w-2j-2} \prod_{j=1}^{m} \frac{1}{w-2j-n} \quad \mbox{for} \ m \geq 1 ,
\]
as well as
\[
 Q'(w)=\frac{d Q_m}{dw}(w) = - Q_m(w) \left[ \sum_{j=0, 2j+2 \neq n}^{m} \frac{1}{w-2j-2} +\sum_{j=1}^{m} \frac{1}{w-2j-n} \right]
 \quad \mbox{for} \ m \geq 1 .
\]
With this, we get
\[
 \Res_{n} h^{(-1)}_{2m} = \kappa^{2m} Q_m(n) \quad \mbox{for} \ 2m < n-2 , 
\]
and 
\[
 \Res_{n}^{(1)} h^{(-1)}_{2m} = \kappa^{2m} Q_m(n) , \quad
 \Res_{n}^{(2)} h^{(-1)}_{2m} = \kappa^{2m} Q'_m(n) \quad \mbox{for} \ 2m \geq n-2 .
\]
Therefore, according to formulas (\ref{kp}) and (\ref{h2m}),
\[
 k_{2m}(r) = -r^{2-n} \kappa^{2m} \left\{ \begin{array}{cc} Q_m(n) p_{0} & \mbox{for} \ 2m < n-2 \\
 -\bigl( \ln(r) Q_m(n) -Q'_m(n) \bigr) p_{0} & \mbox{for} \ 2m \geq n-2 \end{array} \right.
\]
Let us demonstrate the proposition by an explicit calculation for $n=4$, where we have
\[
 Q_0(4)=\frac{1}{2}, \quad Q_m(4) = -\frac{1}{2^{2m} m! (m-1)!} \quad \mbox{for} \ m \geq 1
\]
and 
\[
 Q'_m(4) = -\frac{1}{2^{2m+1} m! (m-1)!} \left( -1 +\sum_{j=1}^{m-1} j^{-1} +\sum_{j=1}^{m} j^{-1} \right) \quad \mbox{for} \ m \geq 1 .
\]
Finally, with $p_0=\frac{1}{2\pi^2}$, we get
\[
 k(r) = -\frac{1}{4\pi^2} r^{-2}  -\sum_{m=1}^{\infty} r^{-2+2m} \frac{\kappa^{2m}}{2\pi^2} \left[ \frac{1}{2^{2m} m! (m-1)!} \ln(r) -\frac{1}{2^{2m+1} m! (m-1)!} \left( -1 +\sum_{j=1}^{m-1} j^{-1} +\sum_{j=1}^{m} j^{-1} \right) \right] 
\]
and after some algebraic manipulations, we obtain
\begin{equation}
 k(r) = -\frac{1}{4\pi^2} r^{-2}  -\frac{1}{4\pi^2} (\kappa r)^{-1} I_1(\kappa r) \ln(r) + \frac{\kappa^2}{(4\pi)^2} \sum_{k=0}^{\infty} \frac{1}{2^{2k} k! (k+1)!} \left( -1 +\sum_{j=1}^{k} j^{-1} +\sum_{j=1}^{k+1} j^{-1} \right) (\kappa r)^{2k}
\label{krLk}
\end{equation}
where we used, cf.~\cite{AS}[Eq.~9.6.10],
\[
 (\kappa r)^{-1} I_1(\kappa r) = \frac{1}{2} \sum_{k=0}^{\infty} \frac{1}{2^{2k}k!(k+1)!} (\kappa r)^{2k} ,
\]
which according to (\ref{modu}), with $\alpha =-1$, represents a fundamental solution.
\end{proof}

Let us finally consider the role of the terms in the kernel of the parametrix which have been excluded from our construction, i.e.,
\[
 K^{(1)}(r,\phi|\tilde{r},\tilde{\phi}) = \sum_0^{\infty} r^p K^{(1)}_p(r,\phi|\tilde{r},\tilde{\phi}) .
\]

\begin{remark}
According to our discussion after Lemma \ref{lemmaPtypA}, it is only the $\ell=0$ term 
for which the Limit $\tilde{r} \rightarrow 0$ cannot be performed.

For $n$ even, $k^{(1)}_{2m} := P_0 K^{(1)}_{2m}$, $m=1,2,\ldots$, is given by 
\begin{eqnarray*}
  k^{(1)}_{2m}(r,\tilde{r}) & := &
  -r^{2-n} \sum_{j=1}^{q_{0}} \delta \bigl( w_j^{(2)}(\lambda_{0})-n \bigr) \ln(\tilde{r}) \left( \Res_{w_j^{(2)}(\lambda_{0})}^{(1)} h^{(-1)}_{2m}(\cdot,\lambda_{0}) \right) p_{0} \\
  & = & -r^{2-n} \ln(\tilde{r}) \kappa^{2m} Q_m(n) p_{0}
\end{eqnarray*}
which sums up to  
\[
 k^{(1)}(r,\tilde{r}) := \sum_{m=1}^{\infty} r^{2m}k^{(1)}_{2m}(r,\tilde{r}) = -\left( \sum_{m=1}^{\infty} r^{2(m+1)-n} \kappa^{2m} Q_m(n) \right) \ln(\tilde{r}) p_{0} ,
\]
in particular for $n=4$, we get
\begin{eqnarray*}
 k^{(1)}(r,\tilde{r}) & = & \left( \sum_{m=1}^{\infty} r^{2m-2} \kappa^{2m} \frac{1}{2^{2m} m! (m-1)!} \right) \ln(\tilde{r}) p_{0} \\
 & = & \frac{\kappa^2}{4} \left( \sum_{k=0}^{\infty} (\kappa r)^{2k} \frac{1}{2^{2k} (k+1)! k!} \right) \ln(\tilde{r}) p_{0} \\
 & = & \frac{1}{2} r^{-1} I_1(\kappa r) \ln(\tilde{r}) p_{0} 
\end{eqnarray*}
In the second line, we used (\ref{rI}), which shows that $k^{(1)}$ corresponds to the kernel of a Green operator which maps onto a smooth function in the kernel of the shifted Laplace operator.
\end{remark}

\subsection{Green's functions from the parametrix}
In order to demonstrate that one can even get a generalized Laplace expansion of a Green's function from the kernel of the parametrix, we consider the case $n=3$ where no logarithmic terms show up in the expansion.

\begin{proposition}
For $\Delta_3-\kappa^2$ ,the kernel of the asymptotic parametrix  
\[
 K_{\kappa}(r,\phi|\tilde{r},\tilde{\phi}) = \sum_0^{\infty} r^p K_p(r,\phi|\tilde{r},\tilde{\phi}) .
\]
is given by
\begin{eqnarray}
\nonumber
 K_{\kappa}(r,\phi|\tilde{r},\tilde{\phi}) & = & H(r-\tilde{r}) \frac{\pi}{2} \sum_{\ell=0}^{\infty} (-1)^{\ell} \left[ \frac{I_{\ell+\frac{1}{2}}(\kappa r)}{\sqrt{r}} \frac{I_{-\ell-\frac{1}{2}}(\kappa \tilde{r})}{\sqrt{\tilde{r}}} - \frac{I_{-\ell-\frac{1}{2}}(\kappa r)}{\sqrt{r}} \frac{I_{\ell+\frac{1}{2}}(\kappa \tilde{r})}{\sqrt{\tilde{r}}} \right] p_{\ell}(\phi,\tilde{\phi}) \\ \label{Kkappa}
 & & -\sqrt{\frac{\pi\kappa}{2}} \sum_{\ell=0}^{\infty} \frac{I_{\ell+\frac{1}{2}}(\kappa r)}{\sqrt{r}} S_{\ell}(\kappa \tilde{r}) p_{\ell}(\phi,\tilde{\phi})
\end{eqnarray}
with
\[
 S_{\ell}(\kappa \tilde{r}) := \sum_{m=0}^{\lfloor \ell/2 \rfloor} (\kappa \tilde{r})^{-1-\ell+2m} \frac{(-1)^m\bigl( 2(\ell-m)-1 \bigr)!!}{2^m m!!} .
\]
and $I_{\pm\ell\pm\frac{1}{2}}$ denote modified Bessel function of first order.
The corresponding regular family of distributions 
\[
 K_{\kappa}( \cdot, \tilde{x}) := \left\{ \begin{array}{cc}  \Pf K_{\kappa}(\cdot|\tilde{r},\tilde{\phi}) & \mbox{for} \ \tilde{x} = \varphi(\tilde{r},\tilde{\phi}) \neq 0 \\
 \lim_{\tilde{r} \rightarrow 0} \Pf K_{\kappa}(\cdot|\tilde{r},\tilde{\phi}) & \mbox{for} \ \tilde{x} =0 \end{array} \right.
\]
satisfies Definition \ref{greendef} for a Green's function.
\end{proposition}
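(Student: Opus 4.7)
The strategy is to evaluate the residues produced by the asymptotic parametrix construction of Lemma~\ref{lemmaPtypA} specialised to the shifted Laplacian in dimension $n=3$, resum them into closed form, and then verify the Green's function identity (\ref{Gdef}) directly from that closed form. For $n=3$ the operator $\tilde{\Delta}_3-\kappa^2$ is type-A with $a^{(0)}_1=-1$, $b^{(0)}=-1$ and $\Lambda_X=-\Delta_{S^2}$, so $\Delta w(\lambda_\ell)=\ell+\tfrac12$, and the product formula (\ref{h2m}) gives
\[
 h^{(-1)}_{2m}(w,\lambda_\ell)=\kappa^{2m}\prod_{i=0}^{m}\frac{1}{(w-2i-2+\ell)(w-2i-\ell-3)}.
\]
Its poles $w=2i+2-\ell$ and $w=2i+\ell+3$, $i=0,\ldots,m$, are all simple, because the pole shifts in (\ref{hm1poles}) are always even while $2\Delta w(\lambda_\ell)=2\ell+1$ is odd; Proposition~\ref{proposition3} therefore excludes second-order poles and the logarithmic branch of (\ref{formulaKp}) is absent.

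I would insert these residues into (\ref{formulaKp}) and then exchange the order of summation $\sum_{m\ge 0}r^{2m}\sum_{\text{poles}}\longmapsto\sum_{\text{pole position}}\sum_m$. For the right-pole family $w=2k+\ell+3$, fixed $k$, setting $j=m-k$ and evaluating $\prod_{i\ne k}$ at this pole splits the residue into a factor depending only on $k$ and one depending only on $j$. The $j$-sum $\sum_j(\kappa r)^{2j}r^{-\ell-1}(\cdot)$ reproduces the series of $r^{-1/2}I_{-\ell-\frac12}(\kappa r)$ through $I_\nu(z)=\sum_j(z/2)^{\nu+2j}/[j!\,\Gamma(\nu+j+1)]$, while the $k$-sum $\sum_k\tilde r^{2k+\ell}(\cdot)$ reproduces the series of $\tilde r^{-1/2}I_{\ell+\frac12}(\kappa\tilde r)$. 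By the symmetric manipulation the companion family $w=2k+2-\ell$, $k\ge\lceil\ell/2\rceil$, produces the series for $r^{-1/2}I_{\ell+\frac12}(\kappa r)\cdot\tilde r^{-1/2}I_{-\ell-\frac12}(\kappa\tilde r)$, and the two combine with opposite signs into the antisymmetric bracket of (\ref{Kkappa}), the overall prefactor $(\pi/2)(-1)^\ell$ emerging from the residue $-1/(2\ell+1)$ together with the reflection identity $\Gamma(\ell+\tfrac12)\Gamma(\tfrac12-\ell)=(-1)^\ell\pi$. The finitely many left poles $w=2i+2-\ell$ with $2i<\ell$, located to the left of the contour and hence contributing only on $r<\tilde r$, cannot telescope into an infinite Bessel series; evaluating their contributions through the finite products $\prod_{i'\ne i}$ and summing over $m$ yields exactly the double-factorial coefficients $(-1)^m(2(\ell-m)-1)!!/(2^m m!!)$ of $S_\ell(\kappa\tilde r)$, multiplying $r^{-1/2}I_{\ell+\frac12}(\kappa r)$ with the announced prefactor $-\sqrt{\pi\kappa/2}$.

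Once the closed form (\ref{Kkappa}) is established, the Green's function property follows directly. Both $r^{-1/2}I_{\ell+\frac12}(\kappa r)$ and $r^{-1/2}I_{-\ell-\frac12}(\kappa r)$ are linearly independent solutions of $(\tilde{\Delta}_3-\kappa^2)u=0$ in the $\ell$-angular subspace, so each $\ell$-summand of $K_\kappa$ is annihilated by $\tilde{\Delta}_3-\kappa^2$ wherever $r\ne\tilde r$. The antisymmetric factor vanishes at $r=\tilde r$, so $K_\kappa$ is continuous there, and the Wronskian identity $W(I_{\ell+\frac12},I_{-\ell-\frac12})(z)=-2(-1)^\ell/(\pi z)$ produces $\partial_r F_\ell(r,\tilde r)|_{r=\tilde r}=2(-1)^\ell/(\pi\tilde r^2)$; the $\partial_r^2$ term in $\tilde{\Delta}_3$ then yields the distributional contribution
\[
 \frac{\delta(r-\tilde r)}{\tilde r^2}\sum_{\ell=0}^\infty p_\ell(\phi,\tilde\phi)=\frac{\delta(r-\tilde r)\,\delta_{S^2}(\phi,\tilde\phi)}{\tilde r^2}=\delta_{\tilde x},
\]
by completeness of the $-\Delta_{S^2}$ eigenbasis on $S^2$. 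For the limit $\tilde r\to 0$ needed to define $G(\cdot,0)$, the negative powers of $\tilde r$ in $I_{-\ell-\frac12}(\kappa\tilde r)/\sqrt{\tilde r}$ are cancelled, for each $\ell\ge 1$, against the corresponding polynomial terms in $S_\ell(\kappa\tilde r)$, so only the $\ell=0$ channel survives; the resulting limit $\Pf(-\cosh(\kappa r)/(4\pi r))$ differs from the classical fundamental solution $\Pf(-e^{-\kappa r}/(4\pi r))$ only by the smooth homogeneous solution $-\sinh(\kappa r)/(4\pi r)$, and is therefore itself a fundamental solution at $\tilde x=0$.

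The main technical obstacle is the combinatorial identification in the middle step: one has to verify that the alternating products $\prod_{i\ne k}[(2k-2i)(2k-2i-2\ell-1)]^{-1}$ arising in the residues collapse, after summation in $m$, onto the Bessel coefficients $[j!\,\Gamma(\pm(\ell+\tfrac12)+j+1)]^{-1}$, and that the finite-$i$ remainders assemble exactly into the double-factorial coefficients of $S_\ell$. The distributional verification in the last step, by contrast, is a short calculation once the Wronskian identity and $L^2(S^2)$-completeness have been invoked.
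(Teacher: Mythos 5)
Your plan for establishing \eqref{Kkappa} mirrors what the paper does in Appendix~C: insert the residues of $h_{2m}^{(-1)}$ into \eqref{formulaKp}, interchange the order of summation, and recognize the resulting power series as modified Bessel functions. The observation that $2\Delta w(\lambda_\ell)=2\ell+1$ is odd while the pole shifts are even, so that Proposition~\ref{proposition3} forces all poles to be simple, is exactly right and matches the paper's argument. Your attribution of the $S_\ell$-term to ``left poles contributing only on $r<\tilde r$'' is, however, not quite accurate: in \eqref{Kkappa} that term carries no Heaviside factor, and in the paper's computation it appears twice, arising for $r>\tilde r$ as the \emph{truncation correction} to the $\tilde m$-sum in \eqref{sum2} (because $\tilde m$ starts at $\lfloor\ell/2\rfloor+1$ rather than $0$) and again, independently, from the left-contour poles in \eqref{sum3} for $r<\tilde r$. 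The two contributions happen to coincide, which is why the term is $H$-free. Your derivation would miss the first source and should be repaired to split the $\tilde m$-sum accordingly.

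Where you genuinely diverge from the paper is the Green's-function verification. The paper proves $\Aop K_{\kappa,\tilde x}=\delta_{\tilde x}$ indirectly, by comparing \eqref{Kkappa} with the Gegenbauer/Watson expansion \eqref{Gkappa} of the canonical Green's function $-e^{-\kappa|x-\tilde x|}/(4\pi|x-\tilde x|)$ and showing that the difference maps ${\cal D}(\mathbb{R}^3)$ into smooth solutions of $(\Delta_3-\kappa^2)u=0$. You instead verify the distributional equation directly: each $\ell$-channel of $K_\kappa$ is annihilated by the radial operator off the diagonal, the antisymmetric bracket vanishes at $r=\tilde r$, and the Wronskian $W(I_{\ell+1/2},I_{-\ell-1/2})(z)=-2(-1)^\ell/(\pi z)$ fixes the normal-derivative jump to $2(-1)^\ell/(\pi\tilde r^2)$, which after multiplying by the prefactor $(\pi/2)(-1)^\ell$ and summing over $\ell$ reconstitutes $\delta(r-\tilde r)\delta_{S^2}(\phi,\tilde\phi)/\tilde r^2$ by completeness. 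This is a legitimate and more self-contained route: it avoids importing the tabulated Gegenbauer addition theorem. The price is that your argument proves only the distributional identity, whereas the paper's comparison simultaneously identifies the deviation $K_\kappa-G_\kappa$ as a Green operator in the cone calculus, which is used in the subsequent discussion of symmetry and decay. If you adopt your route, you would still need a short separate remark on that deviation to recover the paper's later statements; and for the case $\tilde x=0$ you should make the cancellation of the negative $\tilde r$-powers of $I_{-\ell-1/2}(\kappa\tilde r)/\sqrt{\tilde r}$ against $S_\ell(\kappa\tilde r)$ explicit rather than assert it, since it is the mechanism that makes the limit $\tilde r\to0$ exist channel by channel.
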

\begin{proof}
Details of the calculation for (\ref{Kkappa}) are given in Appendix \ref{appendixKkappa}. 
It is intructive to verify (\ref{Kkappa}) by comparison with the canonical Green's
function of the shifted Laplace operator, cf.~\cite{Schwartz}, which can be obtained from the fundamental solution (\ref{un3})
\[
 G_{\kappa}(\cdot,\tilde{x}) = -\frac{1}{4\pi |\cdot-\tilde{x}|} e^{-\kappa |\cdot-\tilde{x}|}
\]
For this Green's function a generalized Laplace expansion based on Gegenbauers addition theorem is known in the literature, cf.~\cite{Watson}[p.366], such that
\[
 G_{\kappa}( \cdot, \tilde{x}) := \left\{ \begin{array}{cc}  \Pf G_{\kappa}(\cdot|\tilde{r},\tilde{\phi}) & \mbox{for} \ \tilde{x} = \varphi(\tilde{r},\tilde{\phi}) \neq 0 \\
 \lim_{\tilde{r} \rightarrow 0} \Pf G_{\kappa}(\cdot|\tilde{r},\tilde{\phi}) & \mbox{for} \ \tilde{x} =0 \end{array} \right.
\]
with
\begin{eqnarray}
\nonumber
 G_{\kappa}(r,\phi|\tilde{r},\tilde{\phi})
  & = & -H(r-\tilde{r})\sum_{\ell=0}^{\infty} \frac{K_{\ell+\frac{1}{2}}(\kappa r)}{\sqrt{r}} \frac{I_{\ell+\frac{1}{2}}(\kappa \tilde{r})}{\sqrt{\tilde{r}}} p_{\ell}(\phi,\tilde{\phi}) \\ \label{Gkappa}
 & & -H(r-\tilde{r})\sum_{\ell=0}^{\infty} \frac{I_{\ell+\frac{1}{2}}(\kappa r)}{\sqrt{r}} \frac{K_{\ell+\frac{1}{2}}(\kappa \tilde{r})}{\sqrt{\tilde{r}}} p_{\ell}(\phi,\tilde{\phi})
\end{eqnarray}
where
\begin{equation}
\label{Besselsk}
 K_{\ell+\frac{1}{2}}(\kappa \tilde{r}) = \frac{\pi}{2} (-1)^{\ell} \left( I_{-\ell-\frac{1}{2}}(\kappa r) -I_{\ell+\frac{1}{2}}(\kappa r) \right)
\end{equation}
denotes a modified Bessel function of second order.

Let us consider the differences between the kernel functions (\ref{Kkappa}) and (\ref{Gkappa}), which is given by
\begin{equation}
 K_{\kappa}(r,\phi|\tilde{r},\tilde{\phi}) -G_{\kappa}(r,\phi|\tilde{r},\tilde{\phi})
 = \sum_{\ell=0}^{\infty} \frac{I_{\ell+\frac{1}{2}}(\kappa r)}{\sqrt{r}} \left( \frac{ K_{\ell+\frac{1}{2}}(\kappa \tilde{r})}{\sqrt{\tilde{r}}} +\sqrt{\frac{\pi\kappa}{2}} S_{\ell}(\kappa \tilde{r}) \right) p_{\ell}(\phi,\tilde{\phi}) .
\end{equation}
According to (\ref{rI}), the distribution corresponding to this difference maps ${\cal D}(\mathbb{R}^3)$ into a smooth function in the kernel of $\Delta_3-\kappa^2$.
\end{proof}

Although, the kernel of the parametrix (\ref{Kkappa}) satisfies our definition of a Green's function it is clear, by comparison with (\ref{Gkappa}), that it has some shortcomings. First of all, (\ref{Kkappa}) is not a symmetric kernel function, unlike (\ref{Gkappa}), which reflects the fact that the shifted Laplace operator is symmetric.
Given (\ref{Kkappa}), this can be cured however by a simple consideration.
First of all, the second sum can be skipped, bcause it maps ${\cal D}(\mathbb{R}^3)$ into smooth functions in the kernel of $\Delta_3-\kappa^2$. Furthermore, according to the previous remark, we are free to add a term of the form
\[
 \frac{\pi}{2} (-1)^{\ell} \sum_{\ell=0}^{\infty} \frac{I_{\ell+\frac{1}{2}}(\kappa r)}{\sqrt{r}} 
 \frac{A(\kappa \tilde{r})}{\sqrt{\tilde{r}}} p_{\ell}(\phi,\tilde{\phi})  ,
\]
in order to restore symmetry under permutation $(r,\phi) \leftrightarrow (\tilde{r},\tilde{\phi})$. Permutational symmetry requires
\[
 - \frac{I_{-\ell-\frac{1}{2}}(\kappa r)}{\sqrt{r}} \frac{I_{\ell+\frac{1}{2}}(\kappa \tilde{r})}{\sqrt{\tilde{r}}} +\frac{I_{\ell+\frac{1}{2}}(\kappa r)}{\sqrt{r}} \left( \frac{I_{-\ell-\frac{1}{2}}(\kappa \tilde{r})}{\sqrt{\tilde{r}}} +\frac{A(\kappa \tilde{r})}{\sqrt{\tilde{r}}} \right) = \frac{A(\kappa r)}{\sqrt{r}} \frac{I_{\ell+\frac{1}{2}}(\kappa r)}{\sqrt{r}}
\]
which is obviously satisfied by
\[
 A(\kappa \tilde{r}) = -I_{-\ell-\frac{1}{2}}(\kappa \tilde{r}) \mod I_{\ell+\frac{1}{2}}(\kappa \tilde{r})
\]
The remaining ambiguity can be resolved by
another shortcomming of (\ref{Kkappa}) which concerns its asymptotic behaviour for $r,\tilde{r} \rightarrow \infty$. Whereas (\ref{Gkappa}) decays exponentially, this is not the case for (\ref{Kkappa}). The reason is due to the fact that the modified Bessel functions of the first kind $I_{\pm\ell\pm\frac{1}{2}}$ increase exponentially, and it is only their difference, the modified Bessel functions of the second kind $K_{\ell+\frac{1}{2}}$, cf.~(\ref{Besselsk}), which decays exponentially \cite{AS}[9.7.1,9.7.2]. In order to get the desired asymptotic behaviour, we have to add the term
\[
 \frac{\pi}{2} (-1)^{\ell} \sum_{\ell=0}^{\infty} \frac{I_{\ell+\frac{1}{2}}(\kappa r)}{\sqrt{r}} 
 \frac{I_{\ell+\frac{1}{2}}(\kappa \tilde{r})}{\sqrt{\tilde{r}}} p_{\ell}(\phi,\tilde{\phi})  ,
\]
which obviously preserves permutational symmetry. With these modification (\ref{Kkappa}) ultimatey becomes (\ref{Gkappa}). From the point of view of singular analysis the required modifications can be assigned to Green operators, which is linked to the fact that these terms map into spaces with fixed asymptotic behaviour.

\section{Application in Physics: Scattering theory}
\label{scatteringtheory}
In pseudo-differential calculus the notion of ellipticity  poses severe restrictions on the partial differential operators for which a parametrix exists and therefore excludes many interesting cases. 
A typical example, already mentioned in Section \ref{Sec-Laplace}, is the Laplace operator which does not satisfy the exit condition. As a possible 
remedy, one can consider the elliptic shifted Laplace operator, calculate a fundamental solution or Green's function and finally takes the limit $\kappa \rightarrow 0$, which yields the corresponding quantities for the Laplace operator, cf.~Appendix \ref{app-regfs} for further details. 

Such an approach can be easily generalized by performing analytic continuations with respect to a given parameter on which the fundamental solution or Green's function depends. The method of analytical continuation paves the way to many interesting applications in physics, in particular quantum many-body and scattering theory. 

\begin{remark}
Typical differential operators in scattering theory do not satisfy the exit condition for ellipticity. A prominent example is the Helmholtz equation
\[
 \bigl( \Delta +\kappa^2 \bigr) u=\delta
\]
which differs from the shifted Laplace equation, discussed in Section \ref{shiftedLaplace}, by the minus sign in front of constant $\kappa^2$. Actually, the two equations and their fundamental solutions are related by analytic continuation of the parameter $\kappa$, i.e., a rotation in the complex plane by $\pm \pi/2$ transforms one into the other. More explicitly, let us consider the fundamental solution of the Helmholtz equation in $n$-dimension, cf.~(\cite{EZ18}),
\[
 u_{n,\kappa} =\Pf \frac{1}{4i(2\pi)^{\frac{n-2}{2}}} k^{\frac{n-2}{2}} r^{\frac{2-n}{2}} H^{(1)}_{\frac{n-2}{2}}(\kappa r) ,
\] 
where $H^{(1)}_{\frac{n-2}{2}}$ denotes a Hankel function of the first kind. Using the indentity, cf.~\cite{AS}[9.6.4],
\[
 H^{(1)}_{\frac{n-2}{2}}(\kappa r) = -\frac{2i}{\pi} e^{-\frac{1}{2}\frac{n-2}{2}\pi i} K_{\frac{n-2}{2}}(-i\kappa r) ,
\]
the fundamental solution becomes
\[
 u_{n,\kappa} =\Pf -(2\pi)^{-\frac{n}{2}} (-i\kappa)^{\frac{n-2}{2}} r^{\frac{2-n}{2}} K_{\frac{n-2}{2}}(-i\kappa r)
\]
which can be obtained from the standard fundamental solution of the shifted Laplacian, cf.~(\ref{fndsolLk}) by rotating $\kappa$ from the real to the negative imaginary axis. The renormalized fundamental solution (\ref{krLk}), which we have obtained from the asymptotic parametrix construction, differs from it by holomorpic terms only. Therefor it is possible to get access to a fundamental solution of the Helmholtz equation by an analytic continuation from the asymptotic parametrix construction.  
\end{remark}

Analytic continuation, like the one discussed in the previous remark is a versatile tool in quantum theory \cite{RS3}. A rigorous treatment however, requires full control on the convergence of the power series expansion of the    
asymptotic parametrix construction. We want to discuss this topic in the simplest non trivial case
of $n=3$ nonrelativistic electron-nucleus scattering based on a differential operator, given in spherical coordinates by 
\begin{equation}
 \tilde{\Aop} := -2 \bigl( \Hop +\kappa^2 \bigr) = \frac{1}{r^2} \biggl[ \biggl(-r \frac{\partial}{\partial r} \biggr)^2 -\biggl(-r \frac{\partial}{\partial r} \biggr) +\Delta_{S^2} +r2Z - r^2 2\kappa^2 \biggr] ,
\label{centralpotential}
\end{equation}
where $\Hop$ denotes the non relativistic Hamiltonian of an electron (atomic units), interacting with a nucleus of charge $Z$, cf.~\cite{MM65} for further details. 
The corresponding Mellin symbol $h$ of the differential operator $\Aop$ is given by the expansion
\[
 h(w)=h_0(w)+rh_1+r^2h_2 \quad \mbox{with} \ h_0(w)=w^2-w+\Delta_{S^2}, \ h_1=2Z, \ h_2=-2\kappa^2 .
\]
According to (\ref{hm1p}), we get for the asymptotic parametrix symbol of order 1
\[
 h_{1}^{(-1)}(w)=-\bigl(h_{0}^{(-1)}(w-1) \bigr) Zh_0^{(-1)}(w) \quad \mbox{with} \ h_0^{(-1)}(w)=\frac{1}{h_0(w-2)}
\]
and for order $p \geq 2$, the recursive formula 
\begin{equation}
 h_{p}^{(-1)}(w)=-\biggl( \bigl(h_{p-1}^{(-1)}(w-1) \bigr) Z-\bigl(h_{p-2}^{(-1)}(w-2) \bigr)\kappa^2 \biggr)h_0^{(-1)}(w)  
\label{scatiter}
\end{equation}
In order to keep track of the combinatorics, let us introduce the notion of binary words given by the following definition
\begin{definition}  
A binary word $\alpha$ is composed of the numbers 1 and 2, i.e., $\alpha =(\alpha_1,\alpha_2,\alpha_3,\ldots)$ with $\alpha_i \in \{1,2\}$. For a word composed of $N^{\alpha}$ characters, let $N_Z^{\alpha}$ and $N_{\kappa}^{\alpha}$ denote the number of characters 1 and 2, respectively.
Furthemore, let $S_p$, $p \geq 1$, denote the set of all possible words such that $p=\sum_{k=1}^{N^{\alpha}} \alpha_k=N_Z^{\alpha}+2N_{\kappa}^{\alpha}$.
\label{binaryword} 
\end{definition}  
According to the previous definition, we have, e.g., $S_1 =\{ (1) \}$, $S_2 =\{ (1,1), (2) \}$, $S_3 =\{ (1,1,1), (1,2), (2,1) \}$.
\begin{proposition}
Given the word operations $\lfloor_1$, $\lfloor_2$ $\bigl( _1\rfloor$, $_2\rfloor \bigr)$, given by $\alpha =(\hdots) \rightarrow \alpha\lfloor_1 = (\hdots,1)$ $\bigl( \alpha =(\hdots) \rightarrow _1\rfloor\alpha = (1,\hdots) \bigr)$ and $\alpha =(\hdots) \rightarrow \alpha\lfloor_2 = (\hdots,2)$ $\bigl( \alpha =(\hdots) \rightarrow _2\rfloor\alpha = (2,\hdots) \bigr)$, respectively.
With these operations acting on all elements of a set of binary words, we get $S_p=S_{p-1}\lfloor_1 \cup S_{p-2}\lfloor_2$ and $S_p= _1\rfloor S_{p-1} \cup _2\rfloor S_{p-2}$, respectively.
\label{Spfloor}
\end{proposition}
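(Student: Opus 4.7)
The plan is to establish both identities by a direct bijective decomposition of $S_p$ according to the last (respectively first) character of each word, so the proof is essentially bookkeeping rather than anything deep.

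First I would verify the inclusion $S_{p-1}\lfloor_1 \cup S_{p-2}\lfloor_2 \subseteq S_p$. If $\alpha \in S_{p-1}$, then $\alpha\lfloor_1$ has character sum $(p-1)+1=p$, so $\alpha\lfloor_1 \in S_p$; likewise if $\alpha \in S_{p-2}$, then $\alpha\lfloor_2$ has character sum $(p-2)+2=p$, so $\alpha\lfloor_2 \in S_p$. This handles one direction.

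For the reverse inclusion $S_p \subseteq S_{p-1}\lfloor_1 \cup S_{p-2}\lfloor_2$, I would take an arbitrary $\beta = (\beta_1,\ldots,\beta_N) \in S_p$ and split on the value of its last character $\beta_N \in \{1,2\}$. Writing $\beta' := (\beta_1,\ldots,\beta_{N-1})$, in the case $\beta_N = 1$ the word $\beta'$ has character sum $p-1$, hence $\beta' \in S_{p-1}$ and $\beta = \beta'\lfloor_1 \in S_{p-1}\lfloor_1$; in the case $\beta_N = 2$ one gets analogously $\beta' \in S_{p-2}$ and $\beta = \beta'\lfloor_2 \in S_{p-2}\lfloor_2$. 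This exhausts all cases (note $N \geq 1$ since $p \geq 1$, and for $p=1$ the set $S_{-1}$ is interpreted as empty so only the first branch contributes, consistent with $S_1=\{(1)\}$; for $p=2$ one takes $S_0 = \{\emptyset\}$ so that $S_0\lfloor_2 = \{(2)\}$, matching $S_2$).

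The second identity $S_p = {}_1\rfloor S_{p-1} \cup {}_2\rfloor S_{p-2}$ is proved by the same argument applied to the first character $\beta_1$ instead of the last character $\beta_N$, with the tail $(\beta_2,\ldots,\beta_N)$ playing the role of $\beta'$. There is no real obstacle here; the only subtlety is fixing the convention for small $p$ (in particular defining $S_0$ and treating $S_{-1}=\emptyset$) so that the recursion is unambiguous at the base, but this is a matter of notation rather than of substance.
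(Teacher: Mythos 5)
Your proof is correct and follows essentially the same route as the paper's: a case split on the last (respectively first) character of a word in $S_p$, which the paper states in one line and you simply carry out in more detail, including the harmless bookkeeping for small $p$.
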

\begin{proof}
$S_p=S_{p-1}\lfloor_1 \cup S_{p-2}\lfloor_2$ follows from the fact that $\alpha \in S_{p}$ is either of the form $(\hdots,1)$ or of the form $(\hdots,2)$. The other one follows in an analogous manner.
\end{proof}
\begin{proposition}
The $p$'th order symbol of the asymptotic paramtrix is given by
\begin{equation}
 h_{p}^{(-1)}(w) = \sum_{\alpha \in S_p} (-1)^{N_Z^{\alpha}} 2^{N^{\alpha}} Z^{N_Z^{\alpha}} \bigl( \kappa^2 \bigr)^{N_{\kappa}^{\alpha}} \prod_{j=0}^{N^{\alpha}} h_0^{(-1)}(w-p_j) ,
\label{hm1pword}
\end{equation}
with $p_0=p$ and $p_j=p-\sum_{k=1}^j \alpha_k$ for $j \geq 1$.
\end{proposition}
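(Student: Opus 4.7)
The plan is to argue by strong induction on $p$, using the general recursion (\ref{hm1p}) — which with $h_1 = 2Z$ and $h_2 = -2\kappa^2$ produces a two-term recursion with prefactors $-2Z$ and $+2\kappa^2$ — together with the set decomposition $S_p = S_{p-1}\lfloor_1 \cup S_{p-2}\lfloor_2$ from Proposition \ref{Spfloor}. The base case $p=1$ is direct: formula (\ref{hm1pword}) evaluated on the unique word $(1) \in S_1$ gives $-2Z\, h_0^{(-1)}(w-1)\, h_0^{(-1)}(w)$, in agreement with $h_1^{(-1)}$ (with the convention $S_0 = \{()\}$, empty product equal to $1$, also covering $p=0$).

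For the inductive step at $p \geq 2$, I substitute the hypothesis for $h_{p-1}^{(-1)}$ and $h_{p-2}^{(-1)}$ into the recursion. The key bookkeeping is to identify $h_{p-1}^{(-1)}(w-1)\, h_0^{(-1)}(w)$ with the partial sum indexed by $\alpha \in S_{p-1}\lfloor_1$, and $h_{p-2}^{(-1)}(w-2)\, h_0^{(-1)}(w)$ with the one indexed by $\alpha \in S_{p-2}\lfloor_2$. Concretely, if $\alpha' \in S_{p-1}$ contributes factors $h_0^{(-1)}(w-p'_j)$ with $p'_j = (p-1) - \sum_{k=1}^j \alpha'_k$, then the shift $w \mapsto w-1$ turns these into $h_0^{(-1)}(w - p_j)$ with $p_j = p - \sum_{k=1}^j \alpha'_k$, $j=0,\ldots,N^{\alpha'}$; the extra factor $h_0^{(-1)}(w)$ supplies the missing shift $p_{N^\alpha}=0$. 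These are exactly the shifts prescribed in (\ref{hm1pword}) for the extended word $\alpha = \alpha'\lfloor_1 \in S_p$, and the analogous identification using $w \mapsto w-2$ and $\lfloor_2$ handles the second summand.

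The scalar prefactors then close the induction. The overall $-2Z$ acting on the first summand multiplies the inductive coefficient $(-1)^{N_Z^{\alpha'}} 2^{N^{\alpha'}} Z^{N_Z^{\alpha'}} (\kappa^2)^{N_\kappa^{\alpha'}}$ into $(-1)^{N_Z^\alpha} 2^{N^\alpha} Z^{N_Z^\alpha} (\kappa^2)^{N_\kappa^\alpha}$ for $\alpha = \alpha'\lfloor_1$, since appending a $1$ increments both $N^\alpha$ and $N_Z^\alpha$ while leaving $N_\kappa^\alpha$ fixed; the $+2\kappa^2$ prefactor of the second summand behaves analogously under $\lfloor_2$, incrementing $N^\alpha$ and $N_\kappa^\alpha$ while preserving $N_Z^\alpha$. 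Since the last letter of $\alpha \in S_p$ determines which branch it came from, the union $S_{p-1}\lfloor_1 \cup S_{p-2}\lfloor_2 = S_p$ of Proposition \ref{Spfloor} is disjoint, and the two partial sums merge into the single sum in (\ref{hm1pword}). The principal obstacle is purely combinatorial — carefully verifying that the shift indices $p_j$ transform consistently under the append operations — and once this is spelled out no analytical content remains.
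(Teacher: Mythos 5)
Your proof is correct and follows essentially the same route as the paper's: induction on $p$ via the recursion (\ref{scatiter}) together with the set decomposition $S_p = S_{p-1}\lfloor_1 \cup S_{p-2}\lfloor_2$ of Proposition \ref{Spfloor}, matching the shift indices $p_j$ and the scalar prefactors under the append operations $\lfloor_1$ and $\lfloor_2$. The only superficial difference is that you adopt the convention $S_0=\{()\}$ and base at $p=0,1$, whereas the paper bases at $p=1,2$.
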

\begin{proof}
The proof goes by induction. Formula (\ref{hm1pword}) is obviously true for $p=1,2$. Using (\ref{scatiter}) and Proposition \ref{Spfloor}, we can perform the induction step
\begin{eqnarray*}
 h_{p+1}^{(-1)}(w) & = & \sum_{\alpha \in S_p} (-1)^{N_Z^{\alpha}+1} 2^{N^{\alpha}+1} Z^{N_Z^{\alpha}+1} \bigl( \kappa^2 \bigr)^{N_{\kappa}^{\alpha}} \left( \prod_{j=0}^{N^{\alpha}}  h_0^{(-1)}(w-p_j-1) \right) h_0^{(-1)}(w) \\
 &  & + \sum_{\alpha \in S_{p-1}} (-1)^{N_Z^{\alpha}} 2^{N^{\alpha}+1} Z^{N_Z^{\alpha}} \bigl( \kappa^2 \bigr)^{N_{\kappa}^{\alpha}+1} \left( \prod_{j=0}^{N^{\alpha}} h_0^{(-1)}(w-p_j-2) \right) h_0^{(-1)}(w) \\ 
 & = & \sum_{\alpha \in S_p\lfloor_1} (-1)^{N_Z^{\alpha}} 2^{N^{\alpha}} Z^{N_Z^{\alpha}} \bigl( \kappa^2 \bigr)^{N_{\kappa}^{\alpha}} \prod_{j=0}^{N^{\alpha}}  h_0^{(-1)}(w-p_j) \\
 &  & + \sum_{\alpha \in S_{p-1}\lfloor_2} (-1)^{N_Z^{\alpha}} 2^{N^{\alpha}} Z^{N_Z^{\alpha}} \bigl( \kappa^2 \bigr)^{N_{\kappa}^{\alpha}} \prod_{j=0}^{N^{\alpha}}  h_0^{(-1)}(w-p_j) \\
 & = & \sum_{\alpha \in S_{p+1}} (-1)^{N_Z^{\alpha}} 2^{N^{\alpha}} Z^{N_Z^{\alpha}} \bigl( \kappa^2 \bigr)^{N_{\kappa}^{\alpha}} \prod_{j=0}^{N^{\alpha}}  h_0^{(-1)}(w-p_j) .
\end{eqnarray*}
\end{proof}

Let us first consider the physically most relevant case $n=3$ and calculate the spherical limit $k_p$, given by (\ref{kp}), i.e.,
\begin{eqnarray}
\label{kp3a}
 k_p(r) & = & -\frac{1}{4\pi r} \sum_{\alpha \in S_{p-2}\lfloor_2} (-1)^{N_Z^{\alpha}} 2^{N^{\alpha}} Z^{N_Z^{\alpha}} \bigl( \kappa^2 \bigr)^{N_{\kappa}^{\alpha}} \bigl( \Res_{3} \prod_{j=0}^{N^{\alpha}}  h_0^{(-1)}(\cdot-p_j,\lambda_0) \bigr) \\ \nonumber
 &  & +\frac{1}{4\pi r} \sum_{\alpha \in S_{p-1}\lfloor_1} (-1)^{N_Z^{\alpha}} 2^{N^{\alpha}} Z^{N_Z^{\alpha}} \bigl( \kappa^2 \bigr)^{N_{\kappa}^{\alpha}} \left[ \ln(r) \bigl( \Res_{3}^{(1)} \prod_{j=0}^{N^{\alpha}}  h_0^{(-1)}(\cdot-p_j,\lambda_0) \bigr) \right. \\ \nonumber
 & & \left. -\bigl( \Res_{3}^{(2)} \prod_{j=0}^{N^{\alpha}}  h_0^{(-1)}(\cdot-p_j,\lambda_0) \bigr) \right]
\end{eqnarray}
Taking into account
\[
 \prod_{j=0}^{N^{\alpha}}  h_0^{(-1)}(\cdot-p_j,\lambda_0) = \prod_{j=0}^{N^{\alpha}} \frac{1}{(w-2-p_j)(w-3-p_j)} ,
\]
we get the resolvents
\[
 \Res_{3} \prod_{j=0}^{N^{\alpha}}  h_0^{(-1)}(\cdot-p_j,\lambda_0) = \prod_{j=0}^{N^{\alpha}-1} \frac{1}{p_j(p_j-1)} \quad (\alpha \in S_{p-2}\lfloor_2 \ \mbox{and} \ p \geq 2) ,
\]
\[
 \Res_{3}^{(1)} \prod_{j=0}^{N^{\alpha}}  h_0^{(-1)}(\cdot-p_j,\lambda_0) = \left\{ \begin{array}{cl} -1 & p =1 \\ -\prod_{j=0}^{N^{\alpha}-2} \frac{1}{p_j(p_j-1)} & p \geq 2 \end{array} \right. \quad (\alpha \in S_{p-1}\lfloor_1) ,
\]
\[
 \Res_{3}^{(2)} \prod_{j=0}^{N^{\alpha}}  h_0^{(-1)}(\cdot-p_j,\lambda_0) = \left\{ \begin{array}{cl} 0 & p =1 \\ -\prod_{j=0}^{N^{\alpha}-2} \frac{1}{p_j(p_j-1)} \sum_{j=0}^{N^{\alpha}-2} \left( \frac{1}{p_j-1} + \frac{1}{p_j} \right) & p \geq 2 \end{array} \right. \quad (\alpha \in S_{p-1}\lfloor_1) .
\]
Plugging these expressions into (\ref{kp3a}), we get
\[
 k_0(r) = -\frac{1}{4\pi r} , \quad k_1(r) = \frac{1}{4\pi r} 2Z \ln(r) ,
\]
and for $p \geq 2$
\begin{eqnarray}
\label{kp3b}
 k_p(r) & = & -\frac{1}{4\pi r} \sum_{\alpha \in S_{p-2}\lfloor_2} (-1)^{N_Z^{\alpha}} 2^{N^{\alpha}} Z^{N_Z^{\alpha}} \bigl( \kappa^2 \bigr)^{N_{\kappa}^{\alpha}} \prod_{j=0}^{N^{\alpha}-1} \frac{1}{p_j(p_j-1)} \\ \nonumber
 &  & -\frac{1}{4\pi r} \sum_{\alpha \in S_{p-1}\lfloor_1} (-1)^{N_Z^{\alpha}} 2^{N^{\alpha}} Z^{N_Z^{\alpha}} \bigl( \kappa^2 \bigr)^{N_{\kappa}^{\alpha}} \prod_{j=0}^{N^{\alpha}-2} \frac{1}{p_j(p_j-1)} \left[ \ln(r)
 -\sum_{j=0}^{N^{\alpha}-2} \left( \frac{1}{p_j-1} + \frac{1}{p_j} \right) \right]
\end{eqnarray}

\begin{lemma}
Let the function $k_{\Aop}(Z,\kappa,r)$ be given by the series
\begin{equation}
 k_{\Aop}(Z,\kappa,r) = \sum_{p=0}^{\infty} r^p k_p(r) .
\label{kAop}
\end{equation}
The series converges absolutely, i.e., $k_{\Aop}$ is well defined, for all values of $r \in \mathbb{R}_+$\footnote{Actually, it represents a regular function $k_{\Aop}(Z,\kappa,z)$ in the complex $z$-plane cut along the negative real axis.}. For fixed $r>0$ it is an entire function of the parameters $Z,\kappa \in \mathbb{C}$.
\label{lemmaZkappa}
\end{lemma}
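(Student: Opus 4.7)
The plan is to establish a factorial-type upper bound
\[
 |k_p(r)|\;\leq\;\frac{C\, M_K^{\,p}\bigl(1+|\ln r|+\ln(1+p)\bigr)}{r\, p!}
\]
for the coefficients in (\ref{kp3b}), with $M_K$ depending only polynomially on $\sup_{K}(|Z|+|\kappa|^2)$ for any compact $K\subset\mathbb{C}^2$. The two assertions of the lemma then follow by the Weierstrass $M$-test on compact subsets of $(0,\infty)\times\mathbb{C}^2$.

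I would first dispose of the purely combinatorial factors. By Definition \ref{binaryword} and Proposition \ref{Spfloor}, the cardinalities $|S_{p-1}\lfloor_1|$ and $|S_{p-2}\lfloor_2|$ are Fibonacci numbers and hence bounded by $2^{p}$. Since $N_Z^\alpha+2N_\kappa^\alpha=p$ and $N^\alpha\leq p$, the weight $\bigl|(-1)^{N_Z^\alpha}2^{N^\alpha}Z^{N_Z^\alpha}(\kappa^2)^{N_\kappa^\alpha}\bigr|$ is uniformly bounded on $K$ by $M_K^p$ with $M_K:=2\sup_{K}\max(1,|Z|,|\kappa|^2)$.

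The analytic heart of the argument is the residue estimate
\[
 \prod_{j=0}^{N^\alpha-1} p_j(p_j-1)\;\geq\;p!\qquad (\alpha\in S_{p-2}\lfloor_2),
\]
together with the analogous bound $\prod_{j=0}^{N^\alpha-2}p_j(p_j-1)\geq p!$ for $\alpha\in S_{p-1}\lfloor_1$, in which the ill-defined factor $p_{N^\alpha-1}(p_{N^\alpha-1}-1)=1\cdot 0$ is omitted (its absence is precisely what generates the double pole at $w=3$). I would prove both inequalities by induction on the number of $1$'s occurring in $\alpha$. For the baseline with as many $2$'s as possible---$(2,\ldots,2)$ or $(2,\ldots,2,1)$ or $(2,\ldots,2,1,1)$ according to the parity of $p$ and the pole type---a direct computation shows that the (possibly truncated) product equals $p!$ exactly. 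The induction step replaces any interior $2$ by the pair $(1,1)$: this inserts a new value $p_j-1$ between existing consecutive $p_j$ and $p_j-2$, multiplying the product by the new factor $(p_j-1)(p_j-2)\geq 2$ whenever $p_j\geq 3$, which is automatic away from the excluded trailing factor. In parallel, the harmonic sum $\sum_{j=0}^{N^\alpha-2}\bigl(\tfrac{1}{p_j-1}+\tfrac{1}{p_j}\bigr)$ appearing in the order-$2$ residue contribution in (\ref{kp3b}) is majorised by $2H_p\leq 2(1+\ln p)$.

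Combining the three ingredients in (\ref{kp3b}) and dominating the Heaviside factors by $1$, the series $\sum_{p\geq 0}r^p|k_p(r)|$ is dominated term by term, uniformly on compact subsets of $K\times(0,\infty)$, by a constant multiple of $\frac{1+|\ln r|}{r}\sum_{p\geq 0}\frac{(4M_Kr)^p(1+\ln p)}{p!}$, which converges to an expression of order $e^{4M_Kr}$. Locally uniform convergence on $K$, combined with the fact that each $k_p$ is a polynomial in $(Z,\kappa)$, upgrades the pointwise sum to an entire function of $(Z,\kappa)\in\mathbb{C}^2$ for each fixed $r>0$. The main obstacle is the factorial residue estimate $\prod p_j(p_j-1)\geq p!$; once that is secured, the remaining arguments are routine bookkeeping and a standard Weierstrass comparison.
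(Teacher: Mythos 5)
The proposal follows essentially the same strategy as the paper: bound the cardinality of $S_q$ (the paper uses the exact Fibonacci count with asymptotic constant $a=\tfrac{1+\sqrt 5}{2}$, you use the cruder $2^q$), bound the weight $2^{N^\alpha}|Z|^{N_Z^\alpha}|\kappa|^{2N_\kappa^\alpha}$ by a $p$-th power, establish the factorial lower bound $\prod p_j(p_j-1)\geq p!$, bound the harmonic sum by $O(\ln p)$, and compare with a convergent exponential-type majorant. You are slightly more explicit than the paper in upgrading locally uniform convergence to entirety in $(Z,\kappa)$ via the Weierstrass theorem and polynomiality of each $k_p$, which the paper leaves implicit.

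One caveat concerning your proposed induction for the product estimate (which the paper simply asserts). Inducting on the number of $1$'s via the replacement $2\mapsto(1,1)$ does not reach all admissible words: for instance $(1,2,1,2)\in S_6$ contains no adjacent pair $(1,1)$ and so cannot be reduced to the all-$2$ word $(2,2,2)$ by the reverse operation. The claim itself is true, and there is a shorter direct argument you could substitute: the exponents $p_j$, $j=0,\dots,N^\alpha-1$, decrease from $p$ to $p_{N^\alpha-1}\in\{2,3\}$ in steps of $1$ or $2$, and each $2$-step skips exactly one integer which is then supplied as $p_j-1$; hence the multiset $\{p_j,\,p_j-1 : 0\le j\le N^\alpha-1\}$ contains every integer in $\{1,\dots,p\}$ at least once, and therefore $\prod_{j}p_j(p_j-1)\geq p!$ without any induction. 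With that repair your argument is complete and coincides with the paper's.
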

\begin{proof}
Let us first estimate the cardinality of a set $|S_q|$, $q \geq 0$, which is given by
\[
 |S_q| = \sum_{N^{\kappa}=0}^{\lfloor q/2 \rfloor} \begin{pmatrix} q-N^{\kappa} \\ N^{\kappa} \end{pmatrix} = F(q) ,
\] 
where $F(q)$ is the $q$'th Fibonacci number, which for large $q$ have the asymptotic behaviour
\[
 F(q) \sim \frac{a^q}{\sqrt{5}} \quad \mbox{with} \ a=\frac{1+\sqrt{5}}{2} .
\]
Furthermore, we can estimate the products 
\[
 \frac{1}{p!(p-1)!} \leq \prod_{j=0}^{N^{\alpha}-1} \frac{1}{p_j(p_j-1)}, \prod_{j=0}^{N^{\alpha}-2} \frac{1}{p_j(p_j-1)} \leq \frac{1}{p!} ,
\]
and finally the sums
\[
 \sum_{j=0}^{N^{\alpha}-2} \left( \frac{1}{p_j-1} + \frac{1}{p_j} \right) \leq H_p +H_{p-1} \sim \ln(p)+\ln(p-1)+2\gamma
\]
Taking $C:=2a\max\{|Z|,|\kappa|\}$, we can prove the lemma by performing a direct comparisation test with respect to the convergent series
\[
 \frac{1}{4\sqrt{5}\pi} \sum_{p=2}^{\infty} r^{p-1} \left( \frac{C^{p-2}}{(p-2)!}
 +\frac{C^{p-1}}{(p-1)!} \bigl[ |\ln(r)|+\ln(p)+\ln(p-1)+2\gamma \bigr] \right) .
\]
\end{proof}
For (\ref{kp3b}) we can consider two limits with respect to the paramters $Z,\kappa$. For $Z=0$, we just recover (\ref{k2m3d} of the shifted Laplacian, whereas for $\kappa=0$ our calculations yield a fundamental solution of the Hamiltonian. 

\begin{proposition}
A fundamental solution of the differential operator (\ref{centralpotential}) for $\kappa=0$, is given
by (\ref{kp3b}), taking $\kappa=0$, i.e., 
\begin{equation}
 k_{\Aop}(Z,0,r) = -\frac{1}{4\pi r} \left[ 1-2Zr \ln(r) +\sum_{p=2}^{\infty} \frac{(-1)^{p} 2^{p} Z^{p}}{p!(p-1)!} r^p \bigl( \ln(r) -(H_{p}+H_{p-1}-1) \bigr) \right] ,
\label{kAopZ}
\end{equation}
where $H_p := \sum_{k=1}^p \frac{1}{k}$ denotes the $p$-th harmonic number. Furthermore,
$k_{\Hop}(r)=-\frac{1}{2}k_{\Aop}(r)$ is a fundamental solution of the corresponding Hamiltonian.
\label{propZ0}
\end{proposition}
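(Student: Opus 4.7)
The plan is to extract the claimed expression directly from the general formula (\ref{kp3b}) by setting $\kappa = 0$, and then to establish the fundamental solution property by combining the recursive structure of the asymptotic parametrix construction with the spherical integration argument already used in Lemma \ref{lemma2}.

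The first step is to identify which binary words contribute when $\kappa = 0$. The factor $(\kappa^2)^{N_\kappa^\alpha}$ kills any word carrying at least one letter ``2'', so every $\alpha \in S_{p-2}\lfloor_2$ is annihilated (such a word ends in 2), and among $\alpha \in S_{p-1}\lfloor_1$ only the all-ones word $\alpha = (1,1,\ldots,1)$, which has $N^\alpha = N_Z^\alpha = p$ and $N_\kappa^\alpha = 0$, survives. For this word the shifts read $p_j = p - j$ for $j=0,\ldots,p$.

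The second step is two elementary computations. The product telescopes under the substitution $k = p-j$:
\[
 \prod_{j=0}^{p-2} \frac{1}{p_j(p_j-1)} = \prod_{k=2}^{p} \frac{1}{k(k-1)} = \frac{1}{p!\,(p-1)!},
\]
and the sum splits into two harmonic contributions,
\[
 \sum_{j=0}^{p-2}\left(\frac{1}{p_j-1} + \frac{1}{p_j}\right) = \sum_{k=2}^{p}\frac{1}{k-1} + \sum_{k=2}^{p}\frac{1}{k} = H_{p-1} + (H_p - 1).
\]
Plugging these into (\ref{kp3b}) yields the closed form for $k_p(r)$ with $p \geq 2$, while the values $k_0(r) = -\frac{1}{4\pi r}$ and $k_1(r) = \frac{2Z \ln r}{4\pi r}$ stated just above Lemma \ref{lemmaZkappa} match the first two terms of (\ref{kAopZ}). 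Collecting everything into $\sum_p r^p k_p$ gives (\ref{kAopZ}) directly, and absolute convergence is precisely the $\kappa = 0$ specialization of Lemma \ref{lemmaZkappa}.

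The third step is to verify that $\Pf k_\Aop$ is a fundamental solution of $\tilde{\Aop}|_{\kappa=0} = r^{-2}[h_0(-r\partial_r) + r h_1]$. Applying $\tilde{\Aop}$ term by term to $\sum_p r^p k_p(r)$ and commuting $r^{-2}$ through $r^p$ exactly reproduces the recursion (\ref{hm1p}) that defines the symbols $h_p^{(-1)}$, so all contributions for $p \geq 1$ cancel identically. The surviving piece comes only from $k_0$, and the spherically symmetric integration-by-parts argument of Lemma \ref{lemma2}, applied to the operator $\tilde{\Aop}_0$ associated with the conormal symbol $h_0$, delivers $\delta$. The final assertion $\Hop k_\Hop = \delta$ is then immediate from $\tilde{\Aop}|_{\kappa=0} = -2\Hop$ and $k_\Hop = -\tfrac{1}{2}k_\Aop$.

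The main technical obstacle is in this last step: since every $k_p$ with $p \geq 1$ carries a logarithmic factor $\ln r$, one must control the distributional pairing $\langle \tilde{\Aop} \Pf k_\Aop, g\rangle$ with $g \in \mathcal{D}(\mathbb{R}^3)$ and justify interchanging the action of $\tilde{\Aop}$ with the infinite sum over $p$. The prefactors $r^{p-1}\ln r$ are locally integrable against $r^2\,dr$ for all $p \geq 1$, so their derivatives remain integrable without producing any boundary term at $r=0$ beyond the $\delta$ extracted from $k_0$; the interchange of summation and $\tilde{\Aop}$ is underwritten by the absolute convergence provided by Lemma \ref{lemmaZkappa}, which itself is uniform in $r$ on compact subsets of $\mathbb{R}_+$ sufficient for the test function argument.
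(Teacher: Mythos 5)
Your first two steps are correct and reproduce what the paper leaves implicit: at $\kappa=0$ only the all-ones word survives in (\ref{kp3b}), and the telescoping product $\prod_{k=2}^{p}\frac{1}{k(k-1)}=\frac{1}{p!\,(p-1)!}$ together with $\sum_{k=2}^{p}\bigl(\frac{1}{k-1}+\frac{1}{k}\bigr)=H_p+H_{p-1}-1$ gives exactly (\ref{kAopZ}); convergence is indeed Lemma \ref{lemmaZkappa}. Your third step, however, takes a different route from the paper (the paper never argues pointwise annihilation away from the origin; it pairs $\Aop\,\Pf k_{\Aop}$ with a test function, reduces to the radial projection $w_0$, and cancels the coefficients of $\int r^{p-1}w_0\,dr$ and $\int r^{p-1}\ln(r)\,w_0\,dr$ explicitly), and it is precisely there that your argument has a genuine gap. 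The claim that applying $\tilde{\Aop}$ term by term ``exactly reproduces the recursion (\ref{hm1p}), so all contributions for $p\geq 1$ cancel identically'' is asserted, not proved, and it cannot be read off from (\ref{hm1p}) as it stands: (\ref{hm1p}) is an identity of full meromorphic Mellin symbols in $w$, whereas each $k_p$ retains only the residue and the derivative-of-residue data of $h^{(-1)}_p(\cdot,\lambda_0)$ at the single (double) pole $w=3$. Transferring the recursion to the $k_p$ requires extracting both $\Res^{(1)}_3$ and $\Res^{(2)}_3$ of the recursion identity — this is where the constants $H_p+H_{p-1}-1$ enter — and checking that the non-logarithmic parts cancel reduces to the harmonic-number identity $p(p-1)\bigl[(H_p+H_{p-1})-(H_{p-1}+H_{p-2})\bigr]=2p-1$, which is exactly the computation the paper performs (in dual form, on the test-function side). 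The cancellation is true, so your strategy can be completed, but as written the heart of the proof is missing.

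Two smaller points in the same step also need repair. First, ``$r^{p-1}\ln r$ is locally integrable against $r^2dr$, so no boundary term arises'' is not a valid inference: $r^{-1}$ is locally integrable in $\mathbb{R}^3$ and nevertheless $\Delta(1/r)$ produces a delta. What one must check is that in the radial integrations by parts the boundary contributions $r^{2}\,\partial_r\bigl(r^{p-1}\ln r\bigr)\,\tilde g(r)$ and $r^{2}\,r^{p-1}\ln r\,\partial_r\tilde g(r)$ vanish as $r\to 0$ for every $p\geq 1$, so that only the $k_0=-\frac{1}{4\pi r}$ term contributes $w(0)$ via the argument of Lemma \ref{lemma2}. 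Second, absolute convergence of $\sum_p r^pk_p(r)$ does not by itself license interchanging $\tilde{\Aop}$ with the sum; you need locally uniform convergence of the differentiated series on compact subsets of $(0,\infty)$ (which does hold, since the coefficients decay factorially), or, as the paper does, you should move the operator onto the test function and integrate the series term by term with dominated convergence.
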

\begin{proof}
The absolute convergence of the series (\ref{kAopZ}) for any $r>0$ follows from Lemma \ref{lemmaZkappa}.   

In order to show $\Aop |_{\kappa=0} \Pf k_{\Aop}=\delta$, we have to consider, cf.~Appendix \ref{appendixpolardistributions},
\begin{eqnarray}
\nonumber
\lefteqn{\int_{\mathbb{R}^3} w(x) \Aop |_{\kappa=0} \Pf k_{\Aop}(Z,0,r) \, dx} \\ \nonumber
 & = & 4\pi \int_{0}^{\infty} \left[ \biggl( 1+r \frac{\partial}{\partial r} \biggr)^2 w_{0}(r) -\biggl( 1+r \frac{\partial}{\partial r} \biggr) w_{0}(r) +2rZ w_{0}(r) \right] k_{\Aop}(Z,0,r) dr \\ \label{PfkA}
 & = & 4\pi \int_{0}^{\infty} \left[ \biggl( r \frac{\partial}{\partial r} \biggr)^2 w_{0}(r) +\biggl( r \frac{\partial}{\partial r} \biggr) w_{0}(r) +2rZ w_{0}(r) \right] k_{\Aop}(Z,0,r) dr ,
\end{eqnarray}
with $w_0(r)=\frac{1}{4\pi} \int_{S^2} w(r,\phi) \mu(\phi) \, d\phi$.
Straightforward calculations yield
\[
 \int_{0}^{\infty} \left( r \frac{\partial}{\partial r} w_0(r) \right) r^{-1} \, dr = -w_0(0)
\]
\[
 \int_{0}^{\infty} \left( r \frac{\partial}{\partial r} \right) \left[ r \frac{\partial}{\partial r} w_0(r) \right] r^{-1} \, dr = 0
\]
and for $q=0,1,2,\ldots$
\[
 \int_{0}^{\infty} \left( r \frac{\partial}{\partial r} w_0(r) \right) r^q \, dr = -(q+1)\int_{0}^{\infty} r^q w_0(r) \, dr
\]
\[
 \int_{0}^{\infty} \left( r \frac{\partial}{\partial r} w_0(r) \right) r^q \ln(r) \, dr = -\int_{0}^{\infty} \bigl( (q+1)\ln(r) +1\bigr) r^q w_0(r) \, dr
\]

\[
 \int_{0}^{\infty} \left( r \frac{\partial}{\partial r} \right) \left[ r \frac{\partial}{\partial r} w_0(r) \right] r^q \, dr = (q+1)^2\int_{0}^{\infty} r^q w_0(r) \, dr
\]
\[
 \int_{0}^{\infty} \left( r \frac{\partial}{\partial r} \right) \left[ r \frac{\partial}{\partial r} w_0(r) \right] r^q \ln(r) \, dr = (q+1)\int_{0}^{\infty} \bigl( (q+1)\ln(r) +2 \bigr) r^q w_0(r) \, dr
\]
Interting these expressions into (\ref{PfkA}) yields
\begin{eqnarray*}
\lefteqn{\int_{\mathbb{R}^n} w(x) \Aop \Pf k_{\Aop}(Z,0,r) \, dx} \\
 & = & w_0(0) +2^2Z^2 \int_{0}^{\infty} r \ln(r) w_0(r) \, dr
 -\sum_{p=2}^{\infty} \frac{(-1)^{p} 2^{p} Z^{p}}{p!(p-1)!} \left[ p(p-1) \int_{0}^{\infty} r^{p-1} \ln(r) w_0(r) \, dr \right. \\
 & & +(2p-1) \int_{0}^{\infty} r^{p-1} w_0(r) \, dr +2Z \int_{0}^{\infty} r^{p} \ln(r) w_0(r) \, dr \\
 & & \left. -\bigl( H_{p}+H_{p-1}-1 \bigr) \left( p(p-1) \int_{0}^{\infty} r^{p-1} w_0(r) \, dr +2Z \int_{0}^{\infty} r^{p} \ln(r) w_0(r) \, dr \right) \right] \\
 & = & w_0(0) +\sum_{p=3}^{\infty} \frac{(-1)^{p} 2^{p} Z^{p}}{p!(p-1)!} \biggl[ -\bigl( H_{p-1}+H_{p-2}-1 \bigr) p(p-1) \\
 & & -(2p-1) +\bigl( H_{p}+H_{p-1}-1 \bigr) p(p-1) \biggr] \int_{0}^{\infty} r^{p-1} w_0(r) \, dr \\
 & = & w_0(0) \\
 & = & w(0)
\end{eqnarray*}
\end{proof}

Now, let us consider the general case  
\begin{proposition}
The regular distribution $\Pf k_{\Aop}(Z,\kappa,r)$, given by (\ref{kp3b}) and (\ref{kAop}) is a
fundamental solution of the differential operator (\ref{centralpotential}) for $Z,\kappa \in \mathbb{C}$.
\end{proposition}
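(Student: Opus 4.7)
The plan is to extend the strategy of Proposition \ref{propZ0} from the special case $\kappa=0$ to arbitrary $(Z,\kappa)\in\mathbb{C}^{2}$. Fix a test function $w\in\mathcal{D}(\mathbb{R}^{3})$ and, exploiting the spherical symmetry of $k_{\Aop}$, pass to its radial average $w_{0}(r):=\frac{1}{4\pi}\int_{S^{2}} w(r,\phi)\,\mu(\phi)\,d\phi$. This reduces $\int_{\mathbb{R}^{3}} w(x)\,\Aop\Pf k_{\Aop}(Z,\kappa,r)\,dx$ to a one-dimensional radial integral of the form (\ref{PfkA}), augmented by the new contribution $-8\pi\kappa^{2}\int_{0}^{\infty} r^{2}w_{0}(r)\,k_{\Aop}(Z,\kappa,r)\,dr$ coming from the $-2r^{2}\kappa^{2}$ term in (\ref{centralpotential}). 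Absolute convergence of $\sum_{p}r^{p}k_{p}(r)$ together with the analogous uniform bounds on its derivatives, obtained from the same direct comparison test used in Lemma \ref{lemmaZkappa}, justifies exchanging the sum with the integral and the differentiation.

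I would then apply the integration-by-parts identities established in the proof of Proposition \ref{propZ0} to the generic monomial. A short calculation shows
\[
 \Aop\bigl[r^{p}k_{p}\bigr] = p(p+1)r^{p-2}k_{p}+2(p+1)r^{p-1}k'_{p}+r^{p}k''_{p}+2Z\,r^{p-1}k_{p}-2\kappa^{2}r^{p}k_{p},
\]
so that, after interchanging summation and integration, the $r^{m}$-coefficient of $\Aop k_{\Aop}$ couples three successive radial levels ($k_{m},k_{m+1},k_{m+2}$) with the two perturbative contributions $2Zk_{m+1}$ and $-2\kappa^{2}k_{m}$.

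The decisive structural input is the word decomposition $S_{m+2}=S_{m+1}\lfloor_{1}\cup S_{m}\lfloor_{2}$ of Proposition \ref{Spfloor}: the $S_{m+1}\lfloor_{1}$-block of $k_{m+2}$ is precisely what pairs with the Coulomb shift $2Zk_{m+1}$, while the $S_{m}\lfloor_{2}$-block pairs with the spectral shift $-2\kappa^{2}k_{m}$. This pairing is the kernel-level manifestation of the symbol recursion (\ref{scatiter}), which by construction enforces the formal identity $h^{(-1)}(w+2)\,h(w)=1$ to all orders in $r$. Consequently, for every $m\ge 0$ the $r^{m}$-coefficient of $\Aop k_{\Aop}$ cancels identically once the radial-derivative contributions, the Coulomb piece $2Zk_{m+1}$, and the shift piece $-2\kappa^{2}k_{m}$ are combined. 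The only residual contribution is the boundary term at $r=0$ generated by $k_{0}=-\frac{1}{4\pi r}$, which, exactly as in Proposition \ref{propZ0}, yields $w_{0}(0)=w(0)$.

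The principal obstacle is the combinatorial bookkeeping of the $\ln r$ and harmonic-number pieces appearing in (\ref{kp3b}): inside each $r^{m}$-coefficient one must verify that the logarithmic factors coming from the second-order-pole (Coulomb) branch $S_{m+1}\lfloor_{1}$ cancel consistently against those produced by differentiating $k'_{m+1}$ and $k''_{m}$, and that the harmonic-number sums $H_{p}+H_{p-1}-1$ telescope along the word recursion. Once this cancellation is checked order by order, the uniform convergence supplied by Lemma \ref{lemmaZkappa} promotes the formal identity to a distributional one, giving $\Aop\Pf k_{\Aop}(Z,\kappa,r)=\delta$ for all $Z,\kappa\in\mathbb{C}$.
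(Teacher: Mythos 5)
Your proposal follows essentially the same route as the paper's proof: reduce to a radial integral via $w_0$, use the integration-by-parts identities established for Proposition \ref{propZ0}, and invoke the word decomposition $S_p = S_{p-1}\lfloor_1 \cup S_{p-2}\lfloor_2$ (Proposition \ref{Spfloor}) to cancel coefficients order by order, leaving only the boundary term $w_0(0)=w(0)$. The one thing the paper does that you defer is the explicit coefficient comparison---grouping by $\int_0^\infty r^{p-1}w_0\,dr$ and $\int_0^\infty r^{p-1}\ln r\,w_0\,dr$ and showing, via rewriting $S_{p-2}\lfloor_2 = {}_1\rfloor S_{p-3}\lfloor_2 \cup {}_2\rfloor S_{p-4}\lfloor_2$ and the analogous identity for $S_{p-1}\lfloor_1$, that the six resulting blocks cancel in pairs---which you correctly identify as the principal obstacle but leave unverified.
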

\begin{proof}
Similar to the proof of Proposition \ref{propZ0}, we have to consider integrals
\begin{eqnarray}
\nonumber
\lefteqn{\int_{\mathbb{R}^3} w(x) \Aop \Pf k_{\Aop}(Z,\kappa,r) \, dx} \\ \label{PFkAgen}
 & = & 4\pi \int_{0}^{\infty} \left[ \biggl( r \frac{\partial}{\partial r} \biggr)^2 w_{0}(r) +\biggl( r \frac{\partial}{\partial r} \biggr) w_{0}(r) +2rZ w_{0}(r) -2r^2\kappa^2 w_{0}(r) \right] k_{\Aop}(Z,\kappa,r) dr ,
\end{eqnarray}
with $w \in {\cal D}(\mathbb{R}^3)$. Using the integrals from Proposition \ref{propZ0}, we obtain
\begin{eqnarray*}
\lefteqn{\int_{\mathbb{R}^3} w(x) \Aop \Pf k_{\Aop}(Z,\kappa,r) \, dx} \\
 & = & w_0(0) +2\kappa^2 \int_0^{\infty} r w_0(r) \, dr +2^2Z^2 \int_0^{\infty} r \ln(r) w_0(r) \, dr -2^2\kappa^2Z \int_0^{\infty} r^2 \ln(r) w_0(r) \, dr \\
 & & -\sum_{p=2}^{\infty} \biggl\{ \sum_{\alpha \in S_{p-2}\lfloor_2} (-1)^{N_Z^{\alpha}} 2^{N^{\alpha}} Z^{N_Z^{\alpha}} \bigl( \kappa^2 \bigr)^{N_{\kappa}^{\alpha}} \prod_{j=0}^{N^{\alpha}-1} \frac{1}{p_j(p_j-1)} 
\left( p(p-1) \int_0^{\infty} r^{p-1} w _0(r) \, dr \right. \\
 & & \left. +2Z \int_0^{\infty} r^{p} w _0(r) \, dr -2\kappa^2 \int_0^{\infty} r^{p+1} w _0(r) \, dr \right) \\
 & & +\sum_{\alpha \in S_{p-1}\lfloor_1} (-1)^{N_Z^{\alpha}} 2^{N^{\alpha}} Z^{N_Z^{\alpha}} \bigl( \kappa^2 \bigr)^{N_{\kappa}^{\alpha}} \prod_{j=0}^{N^{\alpha}-2} \frac{1}{p_j(p_j-1)} \left[ p(p-1) \int_0^{\infty} r^{p-1} \ln(r) w _0(r) \, dr \right. \\
 & & \left. + (2p-1) \int_0^{\infty} r^{p-1} w _0(r) \, dr +2Z \int_0^{\infty} r^{p} \ln(r) w _0(r) \, dr -2\kappa^2 \int_0^{\infty} r^{p+1} \ln(r) w _0(r) \, dr \right. \\
 & &  -\left( \sum_{j=0}^{N^{\alpha}-2} \biggl( \frac{1}{p_j-1} + \frac{1}{p_j} \biggr) \right) \left( p(p-1) \int_0^{\infty} r^{p-1} w _0(r) \, dr \right. \\
 & & \left. \left. +2Z \int_0^{\infty} r^{p} w _0(r) \, dr -2\kappa^2 \int_0^{\infty} r^{p+1} w _0(r) \, dr \right) \right] \biggr\}
\end{eqnarray*}
In order to prove the proposition, we can compare the coefficients of the integrals $\int_0^{\infty} r^{p-1} w _0(r) \, dr$ and $\int_0^{\infty} r^{p-1} \ln(r) w _0(r) \, dr$, with $p=1,2,\ldots$. 
Let us start with the integrals $\int_0^{\infty} r^{q} w _0(r) \, dr$. It can be easily seen that the two $p=2$ terms cancel each other. For $p \geq 3$ we get the coefficients
\begin{eqnarray*}
\lefteqn{\int_0^{\infty} r^{p-1} w _0(r) \, dr:} \\
 & & -\sum_{\alpha \in S_{p-2}\lfloor_2} (-1)^{N_Z^{\alpha}} 2^{N^{\alpha}} Z^{N_Z^{\alpha}} \bigl( \kappa^2 \bigr)^{N_{\kappa}^{\alpha}} \prod_{j=0}^{N^{\alpha}-1} \frac{1}{p_j(p_j-1)} p(p-1) \\
 & & -\sum_{\alpha \in S_{p-3}\lfloor_2} (-1)^{N_Z^{\alpha}} 2^{N^{\alpha}} Z^{N_Z^{\alpha}} \bigl( \kappa^2 \bigr)^{N_{\kappa}^{\alpha}} \prod_{j=0}^{N^{\alpha}-1} \frac{1}{(p-1)_j((p-1)_j-1)} 2Z \\
 & & +\sum_{\alpha \in S_{p-4}\lfloor_2} (-1)^{N_Z^{\alpha}} 2^{N^{\alpha}} Z^{N_Z^{\alpha}} \bigl( \kappa^2 \bigr)^{N_{\kappa}^{\alpha}} \prod_{j=0}^{N^{\alpha}-1} \frac{1}{(p-2)_j((p-2)_j-1)} 2\kappa^2 \\
 & & -\sum_{\alpha \in S_{p-1}\lfloor_1} (-1)^{N_Z^{\alpha}} 2^{N^{\alpha}} Z^{N_Z^{\alpha}} \bigl( \kappa^2 \bigr)^{N_{\kappa}^{\alpha}} \prod_{j=0}^{N^{\alpha}-2} \frac{1}{p_j(p_j-1)} \left[ (2p-1)
 -\left( \sum_{j=0}^{N^{\alpha}-2} \biggl( \frac{1}{p_j-1} + \frac{1}{p_j} \biggr) \right) p(p-1) \right] \\
  & & +\sum_{\alpha \in S_{p-2}\lfloor_1} (-1)^{N_Z^{\alpha}} 2^{N^{\alpha}} Z^{N_Z^{\alpha}} \bigl( \kappa^2 \bigr)^{N_{\kappa}^{\alpha}} \prod_{j=0}^{N^{\alpha}-2} \frac{1}{(p-1)_j((p-1)_j-1)}
 \left( \sum_{j=0}^{N^{\alpha}-2} \biggl( \frac{1}{(p-1)_j-1} + \frac{1}{(p-1)_j} \biggr) \right) 2Z \\
 & & -\sum_{\alpha \in S_{p-3}\lfloor_1} (-1)^{N_Z^{\alpha}} 2^{N^{\alpha}} Z^{N_Z^{\alpha}} \bigl( \kappa^2 \bigr)^{N_{\kappa}^{\alpha}} \prod_{j=0}^{N^{\alpha}-2} \frac{1}{(p-2)_j((p-2)_j-1)}
 \left( \sum_{j=0}^{N^{\alpha}-2} \biggl( \frac{1}{(p-2)_j-1} + \frac{1}{(p-2)_j} \biggr) \right) 2\kappa^2 \\
\end{eqnarray*}
In order to prove, that the terms in these coefficients cancel each other, let us consider the first three terms and the last three terms, separately. The second and third term add up to
\begin{eqnarray*}
\lefteqn{\ } \\
 & & -\sum_{\alpha \in S_{p-3}\lfloor_2} (-1)^{N_Z^{\alpha}} 2^{N^{\alpha}} Z^{N_Z^{\alpha}} \bigl( \kappa^2 \bigr)^{N_{\kappa}^{\alpha}} \prod_{j=0}^{N^{\alpha}-1} \frac{1}{(p-1)_j((p-1)_j-1)} 2Z \\
 & & +\sum_{\alpha \in S_{p-4}\lfloor_2} (-1)^{N_Z^{\alpha}} 2^{N^{\alpha}} Z^{N_Z^{\alpha}} \bigl( \kappa^2 \bigr)^{N_{\kappa}^{\alpha}} \prod_{j=0}^{N^{\alpha}-1} \frac{1}{(p-2)_j((p-2)_j-1)} 2\kappa^2 \\
 & = & \sum_{\alpha \in 1 \rfloor S_{p-3}\lfloor_2} (-1)^{N_Z^{\alpha}} 2^{N^{\alpha}} Z^{N_Z^{\alpha}} \bigl( \kappa^2 \bigr)^{N_{\kappa}^{\alpha}} \prod_{j=0}^{N^{\alpha}-1} \frac{1}{p_j(p_j-1)} p(p-1) \\
 & & +\sum_{\alpha \in 2 \rfloor S_{p-4}\lfloor_2} (-1)^{N_Z^{\alpha}} 2^{N^{\alpha}} Z^{N_Z^{\alpha}} \bigl( \kappa^2 \bigr)^{N_{\kappa}^{\alpha}} \prod_{j=0}^{N^{\alpha}-1} \frac{1}{p_j(p_j-1)} p(p-1) \\
 & = & \sum_{\alpha \in S_{p-2}\lfloor_2} (-1)^{N_Z^{\alpha}} 2^{N^{\alpha}} Z^{N_Z^{\alpha}} \bigl( \kappa^2 \bigr)^{N_{\kappa}^{\alpha}} \prod_{j=0}^{N^{\alpha}-1} \frac{1}{p_j(p_j-1)} p(p-1)
\end{eqnarray*}
and therefore cancel the first term. By a similar argument, we get for the last and second last term
\begin{eqnarray*}
\lefteqn{\ } \\
  & & +\sum_{\alpha \in S_{p-2}\lfloor_1} (-1)^{N_Z^{\alpha}} 2^{N^{\alpha}} Z^{N_Z^{\alpha}} \bigl( \kappa^2 \bigr)^{N_{\kappa}^{\alpha}} \prod_{j=0}^{N^{\alpha}-2} \frac{1}{(p-1)_j((p-1)_j-1)}
 \left( \sum_{j=0}^{N^{\alpha}-2} \biggl( \frac{1}{(p-1)_j-1} + \frac{1}{(p-1)_j} \biggr) \right) 2Z \\
 & & -\sum_{\alpha \in S_{p-3}\lfloor_1} (-1)^{N_Z^{\alpha}} 2^{N^{\alpha}} Z^{N_Z^{\alpha}} \bigl( \kappa^2 \bigr)^{N_{\kappa}^{\alpha}} \prod_{j=0}^{N^{\alpha}-2} \frac{1}{(p-2)_j((p-2)_j-1)}
 \left( \sum_{j=0}^{N^{\alpha}-2} \biggl( \frac{1}{(p-2)_j-1} + \frac{1}{(p-2)_j} \biggr) \right) 2\kappa^2 \\
  & = & -\sum_{\alpha \in 1\rfloor S_{p-2}\lfloor_1} (-1)^{N_Z^{\alpha}} 2^{N^{\alpha}} Z^{N_Z^{\alpha}} \bigl( \kappa^2 \bigr)^{N_{\kappa}^{\alpha}} \prod_{j=0}^{N^{\alpha}-2} \frac{1}{p_j(p_j-1)}
 \left( \sum_{j=0}^{N^{\alpha}-2} \biggl( \frac{1}{p_j-1} + \frac{1}{p_j} \biggr) -\frac{1}{p} -\frac{1}{p-1} \right) p(p-1)\\
 & & -\sum_{\alpha \in 2\rfloor S_{p-3}\lfloor_1} (-1)^{N_Z^{\alpha}} 2^{N^{\alpha}} Z^{N_Z^{\alpha}} \bigl( \kappa^2 \bigr)^{N_{\kappa}^{\alpha}} \prod_{j=0}^{N^{\alpha}-2} \frac{1}{p_j(p_j-1)}
 \left( \sum_{j=0}^{N^{\alpha}-2} \biggl( \frac{1}{p_j-1} + \frac{1}{p_j} \biggr) -\frac{1}{p} -\frac{1}{p-1}  \right) p(p-1)\\
 & = & -\sum_{\alpha \in S_{p-1}\lfloor_1} (-1)^{N_Z^{\alpha}} 2^{N^{\alpha}} Z^{N_Z^{\alpha}} \bigl( \kappa^2 \bigr)^{N_{\kappa}^{\alpha}} \prod_{j=0}^{N^{\alpha}-2} \frac{1}{p_j(p_j-1)}
 \left[ \left( \sum_{j=0}^{N^{\alpha}-2} \biggl( \frac{1}{p_j-1} + \frac{1}{p_j} \biggr) \right) p(p-1) -(2p-1) \right]
\end{eqnarray*}
and therefore cancel the third last term. 
It remains to consider the coefficients of the integrals $\int_0^{\infty} r^{p-1} \ln(r) w _0(r) \, dr$, with $p=2,3,\ldots$. It can be seen by inspection that the $p=2$ and $p=3$ terms cancel each other. For $p \geq 4$ we get the coefficients
\begin{eqnarray*}
\lefteqn{\int_0^{\infty} r^{p-1} \ln(r) w _0(r) \, dr:} \\
 & & -\sum_{\alpha \in S_{p-1}\lfloor_1} (-1)^{N_Z^{\alpha}} 2^{N^{\alpha}} Z^{N_Z^{\alpha}} \bigl( \kappa^2 \bigr)^{N_{\kappa}^{\alpha}} \prod_{j=0}^{N^{\alpha}-2} \frac{1}{p_j(p_j-1)} p(p-1) \\
 & &  -\sum_{\alpha \in S_{p-2}\lfloor_1} (-1)^{N_Z^{\alpha}} 2^{N^{\alpha}} Z^{N_Z^{\alpha}} \bigl( \kappa^2 \bigr)^{N_{\kappa}^{\alpha}} \prod_{j=0}^{N^{\alpha}-2} \frac{1}{(p-1)_j((p-1)_j-1)} 2Z \\
 & &  +\sum_{\alpha \in S_{p-3}\lfloor_1} (-1)^{N_Z^{\alpha}} 2^{N^{\alpha}} Z^{N_Z^{\alpha}} \bigl( \kappa^2 \bigr)^{N_{\kappa}^{\alpha}} \prod_{j=0}^{N^{\alpha}-2} \frac{1}{(p-2)_j((p-2)_j-1)} 2\kappa^2 \\ 
\end{eqnarray*}
By the same argument given before, it can be shown that the second and third term cancel the first term, and therefore also these coefficients vanish all together. Summing up, we get
\[
 \int_{\mathbb{R}^n} w(x) \Aop \Pf k_{\Aop}(Z,0,r) \, dx =w_0(0) =w(0) ,
\]
which finishes the proof.
\end{proof}

\vspace{2cm}
\appendix
\noindent {\bf \Large Appendices}
\section{Regularization of fundamental solutions}
\label{app-regfs}
Loosely speaking, any reasonable definition of ellipticity for differential operators in a pseudodifferential calculus implies existence of a parametrix. In the framework of the singular pseudodifferential calculus, considered in the present work, the notion of ellipticity involves a whole hierarchy of symbols associated to a differential operator, cf.~Chapter 10 of \cite{HS08} for a detailed discussion. For unbounded domains, e.g.~$\mathbb{R}^n$, one has to take into account the exit behaviour to infinity, as a result, the Laplacian $\Delta_n$ is not elliptic in $\mathbb{R}^n$, it is only the shifted Laplacian $\Delta_n-\kappa^2$, which satisfies the ellipticity conditions. A possible resolution to this problem is to construct at first regularized parametrices for the shifted differential operators which have the correct exit behaviour at infinity and from these, the corresponding fundamental solutions or Green's functions. Afterwards one considers the limit $\kappa \rightarrow 0$ for the regularization parameter $\kappa$, and sees whether one gets the fundamental solution or Green's function of the original problem. 

Let us illustrate such an approach for the Laplacian $\Delta_n$ in $\mathbb{R}^n$ in the case $n \geq 3$. 
A fundamental solution of the shifted Laplacian $\Delta_n-\kappa^2$, see e.g.~Schwartz \cite{Schwartz}[Section II, \S 3], is given by 
\begin{equation}
 u_{n,\kappa} = \Pf \left[ -(2\pi)^{-\frac{n}{2}} \kappa^{\frac{n-2}{2}} r^{\frac{2-n}{2}} K_{\frac{n-2}{2}}(\kappa r) \right] ,
\label{fndsolLk}
\end{equation}
where $K_{\frac{n-2}{2}}$, $n=3,4\ldots$, denote modified Bessel functions of the second kind. The modified Bessel functions have an asymptotic behaviour for $r \rightarrow 0$, cf.~\cite{AS}, of the form
\[
 K_{\frac{n-2}{2}}(\kappa r) \sim \tfrac{1}{2} \Gamma \bigl( \tfrac{n-2}{2} \bigr) \left( \tfrac{1}{2} \kappa r \right)^{-\frac{n-2}{2}} .
\]
Using the identities
\[ 
\Gamma \bigl( \tfrac{n-2}{2} \bigr) = \tfrac{2}{n-2} \Gamma \bigl( \tfrac{n}{2} \bigr) \ \mbox{and} \ \omega_n=\frac{2\pi^{\frac{n}{2}}}{\Gamma \bigl( \tfrac{n}{2} \bigr)}  \ \mbox{(area of $S^{n-1}$)} ,
\]
we get
\[
 \lim_{\kappa \rightarrow 0} u_{n,\kappa} = -\frac{1}{(n-2)\omega_n}r^{2-n}
\]
which agrees with the fundamental solution of the Laplacian.

For the shifted Laplacian $\Delta_n -\kappa^2$ in even dimension $n$, we want to consider possible variations of the canonical fundamental solution (\ref{fndsolLk}) by subtracting some smooth terms which belong to the kernel of the shifted Laplacian.
For the modified Bessel functions of integer order $K_{\frac{n-2}{2}}$, we consider the series, cf.~\cite{AS}[9.6.11], and obtain
\begin{eqnarray}
\nonumber
 -(2\pi)^{-\frac{n}{2}} \kappa^{\frac{n-2}{2}} r^{\frac{2-n}{2}} K_{\frac{n-2}{2}}(\kappa r) & = & - \frac{1}{4\pi^{\frac{n}{2}}} r^{2-n} \sum_{k=0}^{\frac{n-4}{2}} (-1)^k \frac{(\frac{n-4}{2} -k)!}{4^k k!} (\kappa r)^{2k} \\ \label{canu}
 & & +(-1)^{\frac{n-2}{2}} (2\pi)^{-\frac{n}{2}} \kappa^{\frac{n-2}{2}} r^{\frac{2-n}{2}} I_{\frac{n-2}{2}}(\kappa r) \ln \bigl( \tfrac{1}{2} \kappa r \bigr) \\ \nonumber
 & & +(-1)^{\frac{n}{2}} \kappa^{n-2} (4\pi)^{-\frac{n}{2}} \sum_{k=0}^{\infty} \bigl( \psi(k+1)+\psi(\tfrac{n}{2}+k) \bigr) \frac{1}{4^k k! (\frac{n-2}{2} +k)!} (\kappa r)^{2k}
\end{eqnarray}
with Psi function
\[
 \psi(1) = -\gamma, \quad \psi(m) = -\gamma +\sum_{j=1}^{m-1} j^{-1} , \ m=2,3,\ldots
\]
The function, cf.~\cite{AS}[9.6.10],
\begin{equation}
 r^{\frac{2-n}{2}} I_{\frac{n-2}{2}}(\kappa r) = \left( \tfrac{\kappa}{2} \right)^{\frac{n-2}{2}} \sum_{k=0}^{\infty}  \frac{1}{4^k k! (\frac{n-2}{2} +k)!} (\kappa r)^{2k}
\label{rI}
\end{equation}
can be obviously extended to a smooth function in $\mathbb{R}^n$, which belongs to the kernel of the shifted Laplacian as can be seen by an explicit calculation in polar coordinates
\begin{eqnarray*}
 \bigl( \tilde{\Delta}_{n} -\kappa^2 \bigr) r^{\frac{2-n}{2}} I_{\frac{n-2}{2}}(\kappa r) & = &  \frac{1}{r^2} \biggl[ \biggl(-r \frac{\partial}{\partial r} \biggr)^2 - (n-2)\biggl(-r \frac{\partial}{\partial r} \biggr) + \Delta_{S^{n-1}} \biggr] r^{\frac{2-n}{2}} I_{\frac{n-2}{2}}(\kappa r) \\
  & = & r^{-\frac{n+1}{2}} \left[ (\kappa r)^2 \partial_r^2 I_{\frac{n-2}{2}}(\kappa r) +(\kappa r) \partial_r I_{\frac{n-2}{2}}(\kappa r) -\bigl( \left( \tfrac{n-2}{2} \right)^2 +(\kappa r)^2 \bigr) I_{\frac{n-2}{2}}(\kappa r) \right] \\
  & = & 0 ,
\end{eqnarray*}
which continuously extends to $\mathbb{R}^n$ because of the smoothness of the function. Therefore, we can subtract from (\ref{canu}) the terms
\[
 (-1)^{\frac{n-2}{2}} (2\pi)^{-\frac{n}{2}} \kappa^{\frac{n-2}{2}} r^{\frac{2-n}{2}} I_{\frac{n-2}{2}}(\kappa r) \ln \bigl( \tfrac{1}{2} \kappa \bigr)
\]
and
\[
 -(2\gamma +\alpha) (-1)^{\frac{n}{2}} \kappa^{n-2} (4\pi)^{-\frac{n}{2}} \sum_{k=0}^{\infty} \frac{1}{4^k k! (\frac{n-2}{2} +k)!} (\kappa r)^{2k} \quad \alpha \in \mathbb{R}
\]
in order to obtain the modified fundamental solution $\Pf k_{mod}(r)$ with
\begin{eqnarray}
\nonumber
 k_{mod}(r) & = & - \frac{1}{4\pi^{\frac{n}{2}}} r^{2-n} \sum_{k=0}^{\frac{n-4}{2}} (-1)^k \frac{(\frac{n-4}{2} -k)!}{4^k k!} (\kappa r)^{2k} \\ \label{modu}
 & & +(-1)^{\frac{n-2}{2}} (2\pi)^{-\frac{n}{2}} \kappa^{\frac{n-2}{2}} r^{\frac{2-n}{2}} I_{\frac{n-2}{2}}(\kappa r) \ln(r) \\ \nonumber
 & & +(-1)^{\frac{n}{2}} \kappa^{n-2} (4\pi)^{-\frac{n}{2}} \sum_{k=0}^{\infty} \left( -\alpha +\sum_{j=1}^{k} j^{-1} +\sum_{j=1}^{\frac{n-2}{2}+k} j^{-1} \right) \frac{1}{4^k k! (\frac{n-2}{2} +k)!} (\kappa r)^{2k}
\end{eqnarray}

\section{Derivatives of distributions in polar coordinates}
\label{appendixpolardistributions}
Within the present work, we consider regular distributions $\Pf k$ in ${\cal D}'(\mathbb{R}^n)$, with functions $k: C^{\wedge}(X) \rightarrow \mathbb{R}$ which are ${\cal O}(r^{2-n})$ for $r \rightarrow 0$. In order to show that $\Pf k$ represents a fundamental solution of a partial differential operator $\Aop$, with corresponding operator $\tilde{\Aop}$ in the cone algebra, we have to consider the distributional derivative $\Aop \Pf k$, i.e., the distribution defined by
\[
 \int_{\mathbb{R}^n} g(x) \Aop \Pf k(x) \, dx := \int_{\mathbb{R}^n} \bigl( \Aop^{\ast} g(x) \bigr) \Pf k(x) \, dx ,
\]   
where $\Aop^{\ast}$ denotes the formally adjoint operator and $g$ any test function in ${\cal D}(\mathbb{R}^n)$. It should be emphasized, that this definition refers exclusively to cartesian coordinates with corresponding Lebesgue measure $dx$, cf.~Schwartz \cite{Schwartz}[Section II, \S 3]. 

Within the present work, we consider partial differential operators $\Aop$ which correpond to
partial differential operators on the streched cone of the form
\[
 \tilde{\Aop} = r^{-2} \left[ \biggl(-r \frac{\partial}{\partial r} \biggr)^{2} +a_1^{(0)} \biggl(-r \frac{\partial}{\partial r} \biggr) +a_0^{(0)} +b^{(0)} \Lambda_X \right] ,
\]
with coefficients satisfying $a^{(0)}_0 =-a^{(0)}_1(n-2)-(n-2)^2$.
Using $\bigl(-r \frac{\partial}{\partial r} \bigr) =-\bigl( \sum_{i=1}^n x_i \frac{\partial}{\partial x_i} \bigr)$ and the essential self-adjointness of $\Lambda_X$, we get
\begin{eqnarray*}
 \lefteqn{\int_{\mathbb{R}^n} g(x) \Aop \Pf k(x) \, dx} \\
 & = & \int_{\mathbb{R}^n} \left( \left[ \bigl( \frac{\partial}{\partial x_i} x_i \bigr) \bigl( \frac{\partial}{\partial x_j} x_j \bigr) -a_1^{(0)} \bigl( \frac{\partial}{\partial x_i} x_i \bigr) +a_0^{(0)} +b^{(0)} \Lambda_X \right] \frac{g(x)}{|x|^2} \right) \Pf k(x) \, dx \\
 & = & \int_{X} \int_0^{\infty} \left( \left[ \bigl( n+r \frac{\partial}{\partial r} \bigr)^2  -a_1^{(0)} \bigl( n+r \frac{\partial}{\partial r} \bigr) +a_0^{(0)} +b^{(0)} \Lambda_X \right] \frac{\tilde{g}(r,\phi)}{r^2} \right)  k(r,\phi) \, r^{n-1} dr \mu(\phi) d\phi \\
 & = & \int_{X} \int_0^{\infty} \frac{1}{r^2} \biggl( \biggl[ \bigl( (n-2)+r \frac{\partial}{\partial r} \bigr)^2 +a^{(0)}_1 \bigl( (n-2)+r \frac{\partial}{\partial r} \bigr) \\
 & & +a^{(0)}_0 +b^{(0)} \Lambda_X \biggr] \tilde{g}(r,\phi) \biggr) k(r,\phi) \, r^{n-1} dr \mu(\phi) d\phi \\
\end{eqnarray*}
with $\tilde{g} :=\varphi^{\ast}g(r,\phi)$ for $g \in {\cal D}(\mathbb{R}^n)$, cf.~Section \ref{Section-intro}. These integrals exist, because the integrand remains bounded due to our constraint on the coefficients and the ${\cal O}(r^{2-n})$ behaviour of $k$.

\section{Calculation of the parametrix for the shifted Laplace operator in three dimensions}
\label{appendixKkappa}
In this appendix we want to give some technical details concerning the calculation of the kernel of the parametrix (\ref{Kkappa}) for the shifted Laplace operator in three dimensions. According to Proposition \ref{proposition3} all poles of symbols of the parametrix are simple, and formula (\ref{formulaKp}) simplifies to
\begin{eqnarray*}
 K_{2m}(r,\phi|\tilde{r},\tilde{\phi}) & = & -H(r-\tilde{r}) \sum_{\ell=0}^{\infty} \sum_{k=1}^{j_{\ell}} r^{2-w_k(\lambda_{\ell})} \tilde{r}^{w_k(\lambda_{\ell})-n} \bigl( \Res_{w_k(\lambda_{\ell})} h^{(-1)}_{2m}(\cdot,\lambda_{\ell}) \bigr) p_{\ell}(\phi|\tilde{\phi}) \\
  & & +H(\tilde{r} -r) \sum_{\ell=0}^{\infty} \sum_{k=j_{\ell}+1}^{4m+2} r^{2-w_k(\lambda_{\ell})} \tilde{r}^{w_k(\lambda_{\ell})-n} \bigl( \Res_{w_k(\lambda_{\ell})} h^{(-1)}_{2m}(\cdot,\lambda_{\ell}) \bigr) p_{\ell}(\phi|\tilde{\phi}) .
\end{eqnarray*} 
The integration contour $\Gamma_{\frac{7}2-\gamma}$ is taken with $\frac{1}{2} < \gamma < \frac{3}{2}$,
where the poles
\[
 w_{1,j}(\lambda_{\ell} )= 3+\ell+p_j, \ \mbox{with} \ j=0,1,\ldots,m \quad \mbox{and} \quad
 w_{2,j}(\lambda_{\ell}) = 2-\ell+p_j, \ \mbox{with} \ j=0,1,\ldots,m-\lfloor \tfrac{\ell}{2} \rfloor -1
\]
are located right of the integration contour, and the poles
\[
 w_{2,j}(\lambda_{\ell}) = 2-\ell+p_j, \ \mbox{with} \ j=m-\lfloor \tfrac{\ell}{2} \rfloor, \ldots,m ,
\]
are located left of it. Here and in following, we use the notation $p_j := 2m-2j$.

 The symbols (\ref{h2m}) of the parametrix are given by
\[
 h^{(-1)}_{2m}(w,\lambda_{\ell}) = \kappa^{2m} \prod_{j=0}^m \frac{1}{(w-2+\ell-p_j)(w-3-\ell-p_j)} \quad \mbox{for} \ m=0,1,\ldots ,
\]
and we have to calculate the residues at their poles. Let us first consider the case $r > \tilde{r}$, where the poles right to the integration contour contribute.
For these poles, we get
\[
 \Res_{w_j(\lambda_{\ell})} h^{(-1)}_{2m}(\cdot,\lambda_{\ell}) =  \prod_{\substack{k=0\\k \neq j}}^m \frac{1}{p_j-p_k}
 \prod_{k=0}^m \frac{1}{p_j-p_k+1+2\ell}  \quad \mbox{for} \ j=0,1,\ldots m \quad w_j(\lambda_{\ell}) = 3+\ell+p_j ,
\] 
and
\[
 \Res_{w_j(\lambda_{\ell})} h^{(-1)}_{2m}(\cdot,\lambda_{\ell}) =  \prod_{\substack{k=0\\k \neq j}}^m \frac{1}{p_j-p_k}
 \prod_{k=0}^m \frac{1}{p_j-p_k-1-2\ell}  \quad \mbox{for} \ j=0,1,\ldots m-\lfloor \tfrac{\ell}{2} \rfloor-1 \quad w_j(\lambda_{\ell}) = 2-\ell+p_j ,
\]
For the products, we get
\[
 \prod_{\substack{k=0\\k \neq j}}^m \frac{1}{p_j-p_k} =\frac{(-1)^j}{2^mj!(m-j)!} ,
\] 
\[
 \prod_{k=0}^m \frac{1}{p_j-p_k+1+2\ell} = \left\{ \begin{array}{ll} \frac{(-1)^{\ell-j}}{\bigl(2(j-\ell)-1 \bigr)!! \bigl( (2(m-j+\ell)+1 \bigr)!!} & \mbox{for} \ 2j>2\ell+1 \\
 \frac{\bigr(2(\ell-j)-1 \bigr)!!}{\bigl( (2(m-j+\ell)+1 \bigr)!!} & \mbox{for} \ 2j<2\ell+1 \end{array} \right. ,
\]
\[
 \prod_{k=0}^m \frac{1}{p_j-p_k-1-2\ell} = \left\{ \begin{array}{ll} \frac{(-1)^{\ell+j+1}}{\bigl(2(j+\ell)+1 \bigr)!! \bigl( (2(m-j-\ell)-1 \bigr)!!} & \mbox{for} \ j\leq m-\ell \\
 \frac{(-1)^{m+1}\bigl( (2(j+\ell-m)-1 \bigr)!!}{\bigr(2(j+\ell)+1 \bigr)!!} & \mbox{for} \ j\geq m-\ell+1 \end{array} \right. .
\] 
We can now sum up the kernel of the parametrix, 
\begin{eqnarray}
\nonumber
 \lefteqn{K(r,\phi|\tilde{r},\tilde{\phi})} \\ \label{sum1}
 & = & -\sum_{\ell=0}^{\infty}\sum_{m=0}^{\infty} \sum_{j=0}^{m} r^{-1-\ell+2j} \tilde{r}^{2(m-j)+\ell} \kappa^{2m} \frac{(-1)^j f_j}{2^m j! (m-j)! \bigl( (2(m-j+\ell)+1 \bigr)!!} p_{\ell}(\phi|\tilde{\phi}) \\ \nonumber
 & & -\sum_{\ell=0}^{\infty}\sum_{m=\lfloor \tfrac{\ell}{2} \rfloor+1}^{\infty} \sum_{j=0}^{m-\lfloor \tfrac{\ell}{2} \rfloor-1} r^{\ell+2j} \tilde{r}^{-1-\ell+2(m-j)} \kappa^{2m} \frac{g_{mj}}{2^m j! (m-j)! \bigl( (2(j+\ell)+1 \bigr)!!} p_{\ell}(\phi|\tilde{\phi})
\end{eqnarray}
with
\[
 f_j = \left\{ \begin{array}{ll} \bigl(2(\ell-j)-1 \bigr)!! & \mbox{for} \ 0 \leq j \leq \ell \\
 \frac{(-1)^{j-\ell}}{\bigl( (2(j-\ell)-1 \bigr)!!} & \mbox{for} \ \ell+1 \leq j \end{array} \right. ,
\]
and
\[
 g_{mj} = \left\{ \begin{array}{ll} \frac{(-1)^{\ell+1}}{\bigl(2(m-j-\ell)-1 \bigr)!!} & \mbox{for} \ 0 \leq j \leq m-\ell \\
 (-1)^{j+m+1} \bigl( (2(j+\ell-m)-1 \bigr)!! & \mbox{for} \ m-\ell+1 \leq j \leq m-\lfloor \tfrac{\ell}{2} \rfloor -1 \end{array} \right. ,
\]
By changing the order of summation,  
\[
 \sum_{m=0}^{\infty} \sum_{j=0}^{m} \ \rightarrow \ \sum_{j=0}^{\infty} \sum_{m=j}^{\infty} \xrightarrow{\tilde{m}=m-j} \ \sum_{j=0}^{\infty} \sum_{\tilde{m}=0}^{\infty}
\]
we can achieve a separation of the variables, the first sum in (\ref{sum1}) becomes
\begin{eqnarray}
\nonumber
 \lefteqn{-\sum_{\ell=0}^{\infty}\sum_{j=0}^{\infty} \sum_{\tilde{m}=0}^{\infty}  r^{-1-\ell+2j} \tilde{r}^{2\tilde{m}+\ell} \kappa^{2(\tilde{m}+j)} \frac{(-1)^j f_j}{2^{\tilde{m}+j} j! \tilde{m}! \bigl( (2(\tilde{m}+\ell)+1 \bigr)!!} p_{\ell}(\phi|\tilde{\phi})} \\ \nonumber
 & = & -\sum_{\ell=0}^{\infty}\left( \sum_{j=0}^{\ell} r^{-1-\ell+2j} \kappa^{2j} \frac{(-1)^j   \bigl( (2(j-\ell)-1 \bigr)!!}{2^j j!} +\sum_{j=\ell+1}^{\infty} r^{-1-\ell+2j} \kappa^{2j} \frac{(-1)^{\ell}}{2^j j! \bigl( (2(j-\ell)-1 \bigr)!!} \right) \\ \nonumber
 & & \times \left( \sum_{\tilde{m}=0}^{\infty} \tilde{r}^{2\tilde{m}+\ell} \kappa^{2\tilde{m}} \frac{1}{2^{\tilde{m}} \tilde{m}! \bigl( (2(\tilde{m}+\ell)+1 \bigr)!!} \right) p_{\ell}(\phi|\tilde{\phi}) \\ \label{sum1a}
 & = & -\frac{\pi}{2} \sum_{\ell=0}^{\infty}(-1)^{\ell} \frac{I_{-\ell-\frac{1}{2}}(\kappa r)}{\sqrt{r}} \frac{I_{\ell+\frac{1}{2}}(\kappa \tilde{r})}{\sqrt{\tilde{r}}} p_{\ell}(\phi|\tilde{\phi}) ,
\end{eqnarray}
in the last line, we have identified the power series with modified Bessel function of first kind $I_{\pm\ell\pm\frac{1}{2}}$, cf.~\cite{DLMF}[Eqs.~10.53.4, 10.47.8]. The second sum in (\ref{sum1}) can be rewritten  as
\begin{eqnarray}
\nonumber
 \lefteqn{-\sum_{\ell=0}^{\infty} \sum_{j=0}^{\infty} \sum_{\tilde{m}=\lfloor \tfrac{\ell}{2} \rfloor+1}^{\infty} r^{\ell+2j} \tilde{r}^{-1-\ell+2\tilde{m}} \kappa^{2(\tilde{m}+j)} \frac{g_{\tilde{m}j}}{2^{\tilde{m}+j} j! \tilde{m}! \bigl( (2(j+\ell)+1 \bigr)!!} p_{\ell}(\phi|\tilde{\phi})} \\ \nonumber
 & = & -\sum_{\ell=0}^{\infty} \sum_{j=0}^{\infty} \sum_{\tilde{m}=0}^{\infty} r^{\ell+2j} \tilde{r}^{-1-\ell+2\tilde{m}} \kappa^{2(\tilde{m}+j)} \frac{g_{\tilde{m}j}}{2^{\tilde{m}+j} j! \tilde{m}! \bigl( (2(j+\ell)+1 \bigr)!!} p_{\ell}(\phi|\tilde{\phi}) \\ \nonumber
 & & +\sum_{\ell=0}^{\infty} \sum_{j=0}^{\infty} \sum_{\tilde{m}=0}^{\lfloor \tfrac{\ell}{2} \rfloor} r^{\ell+2j} \tilde{r}^{-1-\ell+2\tilde{m}} \kappa^{2(\tilde{m}+j)} \frac{g_{\tilde{m}j}}{2^{\tilde{m}+j} j! \tilde{m}! \bigl( (2(j+\ell)+1 \bigr)!!} p_{\ell}(\phi|\tilde{\phi}) \\ \nonumber
 & = & \sum_{\ell=0}^{\infty} \left( \sum_{j=0}^{\infty} r^{\ell+2j} \kappa^{2j} \frac{1}{2^{j} j! \bigl( (2(j+\ell)+1 \bigr)!!} \right) \\ \nonumber
 & & \times \left( \sum_{\tilde{m}=0}^{\ell-1} \tilde{r}^{-1-\ell+2\tilde{m}} \kappa^{2\tilde{m}} \frac{(-1)^{\tilde{m}} \bigl( (2(\ell-\tilde{m})-1 \bigr)!!}{2^{\tilde{m}} \tilde{m}!} +\sum_{\tilde{m}=\ell}^{\infty} \tilde{r}^{-1-\ell+2\tilde{m}} \kappa^{2\tilde{m}} \frac{(-1)^{\ell}}{2^{\tilde{m}} \tilde{m}! \bigl( (2(\tilde{m}-\ell)-1 \bigr)!!}\right) p_{\ell}(\phi|\tilde{\phi}) \\ \nonumber
 & & -\sum_{\ell=0}^{\infty} \left( \sum_{j=0}^{\infty} r^{\ell+2j} \kappa^{2j} \frac{1}{2^{j} j! \bigl( (2(j+\ell)+1 \bigr)!!} \right) \left( \sum_{\tilde{m}=0}^{\lfloor \tfrac{\ell}{2} \rfloor} \tilde{r}^{-1-\ell+2\tilde{m}} \kappa^{2\tilde{m}} \frac{(-1)^{\tilde{m}} \bigl( (2(\ell-\tilde{m})-1 \bigr)!!}{2^{\tilde{m}} \tilde{m}!} \right) p_{\ell}(\phi|\tilde{\phi}) \\ \label{sum2}
 & = & \sum_{\ell=0}^{\infty} \left[(-1)^{\ell} \frac{\pi}{2}  \frac{I_{\ell+\frac{1}{2}}(\kappa r)}{\sqrt{r}} \frac{I_{-\ell-\frac{1}{2}}(\kappa \tilde{r})}{\sqrt{\tilde{r}}} -\sqrt{\frac{\pi\kappa}{2}} \frac{I_{\ell+\frac{1}{2}}(\kappa r)}{\sqrt{r}} S_{\ell}(\kappa \tilde{r}) \right] p_{\ell}(\phi|\tilde{\phi}) ,
\end{eqnarray}
with
\[
 \tilde{g}_{\tilde{m}j} = \left\{ \begin{array}{ll} \frac{(-1)^{\ell+1}}{\bigl(2(\tilde{m}-\ell)-1 \bigr)!!} & \mbox{for} \ \ell \leq \tilde{m} \\
 (-1)^{\tilde{m}+1} \bigl( (2(\ell-\tilde{m})-1 \bigr)!! & \mbox{for} \ \tilde{m} \leq \ell-1 \end{array} \right. .
\]
Like before, we identified the power series with modified Bessel function of first kind, except of the finit sum given by
\[
 S_{\ell}(\kappa \tilde{r}) := \sum_{m=0}^{\lfloor \ell/2 \rfloor} (\kappa \tilde{r})^{-1-\ell+2m} \frac{(-1)^m\bigl( 2(\ell-m)-1 \bigr)!!}{2^m m!!} .
\]
Finally, let us consider the case $r < \tilde{r}$, where the poles left to the integration contour contribute, we get
\begin{eqnarray}
\nonumber
 \lefteqn{K(r,\phi|\tilde{r},\tilde{\phi})} \\ \nonumber
 & = & \sum_{\ell=0}^{\infty}\sum_{m=0}^{\lfloor \tfrac{\ell}{2} \rfloor} \sum_{j=0}^{m} r^{\ell+2j} \tilde{r}^{-1-\ell+2(m-j)} \kappa^{2m} \frac{ g_{mj}}{2^m j! (m-j)! \bigl( (2(j+\ell)+1 \bigr)!!} p_{\ell}(\phi|\tilde{\phi}) \\ \nonumber
 & & \sum_{\ell=0}^{\infty}\sum_{m=\lfloor \tfrac{\ell}{2} \rfloor+1}^{\infty} \sum_{j=m-\lfloor \tfrac{\ell}{2} \rfloor}^{m} r^{\ell+2j} \tilde{r}^{-1-\ell+2(m-j)} \kappa^{2m} \frac{g_{mj}}{2^m j! (m-j)! \bigl( (2(j+\ell)+1 \bigr)!!} p_{\ell}(\phi|\tilde{\phi}) \\ \nonumber
 & = & \sum_{\ell=0}^{\infty} \sum_{j=0}^{\infty} \sum_{m=j}^{j+\lfloor \tfrac{\ell}{2} \rfloor}  r^{\ell+2j} \tilde{r}^{-1-\ell+2(m-j)} \kappa^{2m} \frac{g_{mj}}{2^m j! (m-j)! \bigl( (2(j+\ell)+1 \bigr)!!} p_{\ell}(\phi|\tilde{\phi}) \\ \nonumber
 & = & \sum_{\ell=0}^{\infty} \sum_{j=0}^{\infty} \sum_{m=j}^{j+\lfloor \tfrac{\ell}{2} \rfloor}  r^{\ell+2j} \tilde{r}^{-1-\ell+2(m-j)} \kappa^{2m} \frac{(-1)^{j+m+1} \bigl( (2(j+\ell-m)-1 \bigr)!!}{2^m j! (m-j)! \bigl( (2(j+\ell)+1 \bigr)!!} p_{\ell}(\phi|\tilde{\phi}) \\ \nonumber
 & = & \sum_{\ell=0}^{\infty} \sum_{j=0}^{\infty} \sum_{\tilde{m}=0}^{\lfloor \tfrac{\ell}{2} \rfloor}  r^{\ell+2j} \tilde{r}^{-1-\ell+2\tilde{m}} \kappa^{2(\tilde{m}+j)} \frac{(-1)^{\tilde{m}+1} \bigl( (2(\ell-\tilde{m})-1 \bigr)!!}{2^{\tilde{m}+j} j! \tilde{m}! \bigl( (2(j+\ell)+1 \bigr)!!} p_{\ell}(\phi|\tilde{\phi}) \\ \nonumber
 & = & -\sum_{\ell=0}^{\infty} \left( \sum_{j=0}^{\infty} r^{\ell+2j} \kappa^{2j} \frac{1}{2^{j} j! \bigl( (2(j+\ell)+1 \bigr)!!} \right) \left( \sum_{\tilde{m}=0}^{\lfloor \tfrac{\ell}{2} \rfloor} \tilde{r}^{-1-\ell+2\tilde{m}} \kappa^{2\tilde{m}} \frac{(-1)^{\tilde{m}} \bigl( (2(\ell-\tilde{m})-1 \bigr)!!}{2^{\tilde{m}} \tilde{m}!} \right) p_{\ell}(\phi|\tilde{\phi}) \\ \label{sum3}
 & = & -\sum_{\ell=0}^{\infty} \sqrt{\frac{\pi\kappa}{2}} \frac{I_{\ell+\frac{1}{2}}(\kappa r)}{\sqrt{r}} S_{\ell}(\kappa \tilde{r}) p_{\ell}(\phi|\tilde{\phi}) .
\end{eqnarray}   
Summing up the terms (\ref{sum1a}), (\ref{sum2}) and (\ref{sum3}), we get the kernel of the parametrix (\ref{Kkappa}).

\clearpage

\end{document}